\newtheorem{lemma}{Lemma}
\newtheorem{thm}{Theorem}
\newtheorem{cor}{Corollary}
\newtheorem{proposition}{Proposition}
\newtheorem{fact}{Fact}
\newtheorem{definition}{Definition}
\newcommand{\Z}{\mathbb{Z}}
\newcommand{\R}[0]{\ensuremath \mathbb{R}}
\newcommand{\e}{\varepsilon}
\newcommand{\ip}[2]{\langle #1, #2 \rangle}
\newcommand{\ones}{\bm{1}}
\newcommand{\mom}[1]{{\left\vert\kern-0.25ex\left\vert\kern-0.25ex\left\vert #1 \right\vert\kern-0.25ex\right\vert\kern-0.25ex\right\vert}}
\newcommand{\conv}{\text{conv}}
\newcommand{\BBdepth}{\text{BBdepth}}
\newcommand{\BBrank}{\text{BBrank}}
\newcommand{\BBhardness}{\text{BBhardness}}
\newcommand{\T}{\mathcal{T}}
\DeclareMathOperator{\argmax}{argmax}
\title{Lower Bounds on the Size of General Branch-and-Bound Trees}
\author[1]{Santanu S. Dey\thanks{santanu.dey@isye.gatech.edu}}
\author[1]{Yatharth Dubey\thanks{yatharthdubey7@gatech.edu, (732) 439-2933}}
\author[2]{Marco Molinaro\thanks{molinaro@inf.puc-rio.br}}
\affil[1]{School of Industrial and Systems Engineering, Georgia Institute of Technology}
\affil[2]{Computer Science Department, PUC-Rio}
\date{}
\begin{document}
\maketitle
\begin{abstract}
A \emph{general branch-and-bound tree} is a branch-and-bound tree which is allowed to use general disjunctions of the form $\pi^{\top} x \leq \pi_0 \,\vee\, \pi^{\top}x \geq \pi_0 + 1$, where $\pi$ is an integer vector and $\pi_0$ is an integer scalar, to create child nodes. We construct a packing instance, a set covering instance, and a Traveling Salesman Problem instance, such that any general branch-and-bound tree that solves these instances must be of exponential size. We also verify that an exponential lower bound on the size of general branch-and-bound trees persists even when we add Gaussian noise to the coefficients of the cross-polytope, thus showing that a polynomial-size ``smoothed analysis'' upper bound is not possible. The results in this paper can be viewed as the branch-and-bound analog of the seminal paper by Chv\'atal et al.~\cite{chvatal1989cutting}, who proved lower bounds for the Chv\'atal-Gomory rank.
\end{abstract}

\newpage

\section{Introduction}

Solving combinatorial optimization problems to optimality is a central object of study in Operations Research, Computer Science, and Mathematics. One way to solve a combinatorial optimization problem is to model it as an integer program (IP),
namely a problem of the form 
	\begin{align}
		\begin{split}
		\max ~& \ip{c}{x}\\
		\textrm{s.t.}~& Ax \le b \\
		& x \in \Z^n 
		\end{split} \tag{IP} \label{IP}
	\end{align}
	
and then use an {IP} solver. The branch-and-bound algorithm, invented by Land and Doig in~\cite{land1960automatic}, is the underlying algorithm implemented in all modern  state-of-the-art MILP solvers. 

As is well-known, the branch-and-bound algorithm searches the solution space by recursively partitioning it. The progress of the algorithm is monitored by maintaining a tree. Each node of the tree corresponds to a linear program (LP) solved, and in particular, the root-node corresponds to the LP relaxation of the integer program (i.e., the where  the constraint $x \in \Z^n$ in \eqref{IP} is removed). After solving the LP corresponding to a node, the feasible region of the LP  is partitioned into two subproblems (which correspond to the child nodes of the given node), so that the fractional optimal solution of the LP is not included in either subproblem, but any integer feasible solution contained in the feasible region of the LP is included in one of the two subproblems. This is accomplished by adding an inequality of the form $\pi^{\top} x \leq \pi_0$ to the first subproblem and the inequality $\pi^{\top}x \geq \pi_0 + 1$ to the second subproblem (these two inequalities are referred as a disjunction), where $\pi$ is an integer vector and $\pi_0$ is an integer scalar (see Figure \ref{fig:bb}). The process of partitioning at a node stops if (i) the LP at the node is infeasible, or (ii) the LP's optimal solution is integer feasible, or (iii) the LP's optimal objective function value is worse than an already known integer feasible solution.  These three conditions are sometimes referred to as the rules for pruning a node. The algorithm terminates when there are no more ``open nodes'' to process, that is all nodes have been pruned. A branch-and-bound algorithm is completely described by fixing a rule for partitioning the feasible region at each node and a rule for selecting which open node should be solved and branched on next.  If the choice of $\pi$ is limited to being the canonical basis vectors $e_j = (0,\ldots,0,1,0,\ldots,0)$ (with the 1 in the $j$-th position), then we call such an algorithm a \emph{simple} branch-and-bound, and without such a restriction on $\pi$ we call the algorithm a \emph{\textbf{general}} branch-and-bound. See ~\cite{wolsey1999integer,conforti2014integer} for more discussion on branch-and-bound and for general background on integer programming.  

\begin{figure}[ht]\label{fig:bb}
\centering
\includegraphics[width=0.45\textwidth]{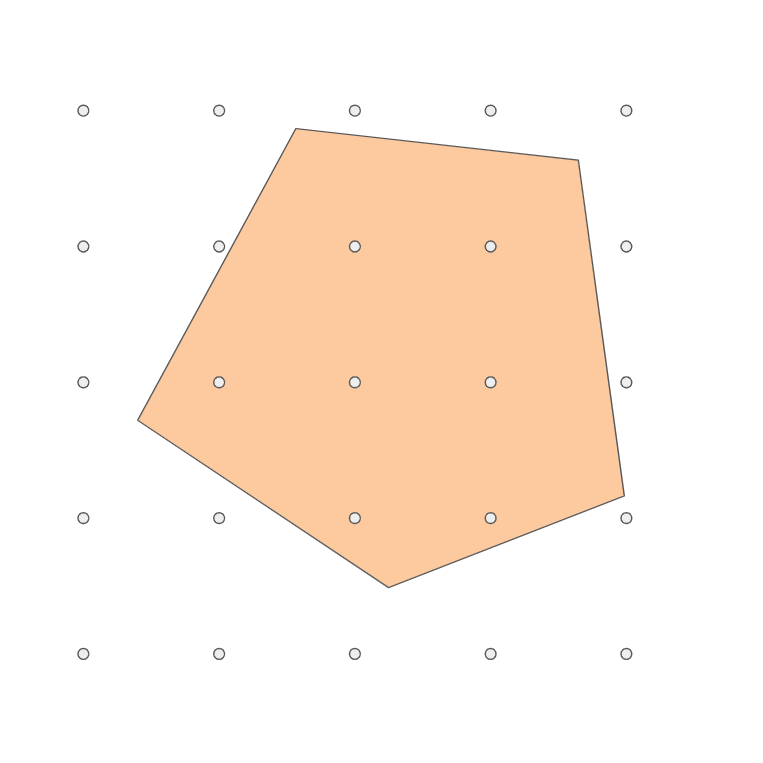}
\includegraphics[width=0.45\textwidth]{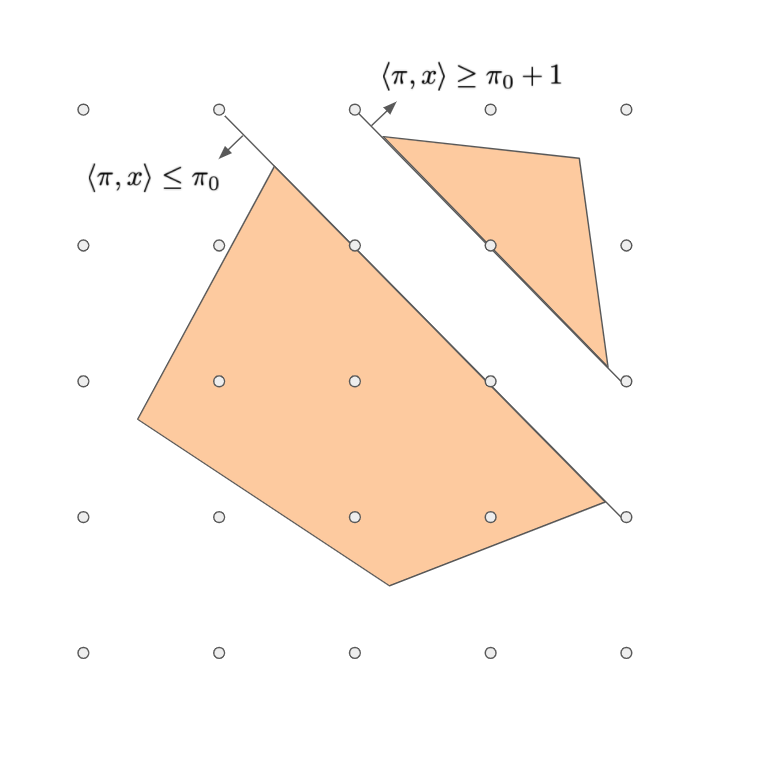}
\caption{The \textbf{left} picture is the initial polytope and the \textbf{right} picture demonstrates the two subproblems as a result of branching on the disjunction $\ip{\pi}{x} \leq \pi_0 \vee \ip{\pi}{x} \geq \pi_0 + 1$. }
\end{figure}

Here we take interest in the size of general branch-and-bound trees. This is because instances requiring an exponential number of nodes to solve using general branch-and-bound are likely to also be challenging for MILP solvers. We hope that this study can provide some intuition on when and why solvers struggle with a MILP instance and how to formulate heuristics to combat these bottlenecks.


\subsection{Known bounds on the size of branch-and-bound trees}


\paragraph{Upper bounds on the size of branch-and-bound trees and ``positive'' results.} In 1983, Lenstra~\cite{lenstra1983integer} showed that integer programs can be solved in polynomial time in fixed dimension. This algorithm can be viewed as a general branch-and-bound algorithm that uses tools from the geometry of numbers, in particular the lattice basis reduction algorithm~\cite{lenstra1982factoring}, to decide on $\pi$ for partitioning the feasible region. Pataki~\cite{pataki2010basis} proved that most random packing IPs (i.e., where $A$ and $b$ in \eqref{IP} are non-negative) can be solved at the root-node using a partitioning scheme similar to the one proposed by Lenstra~\cite{lenstra1983integer}. It has been observed that using such general partitioning rules can result in significantly smaller trees than using a simple branch-and-bound for some instances~\cite{aardal2000market,gerardGeneralBranching}, but most commercial solvers use the latter.
Recently, we showed~\cite{dey2021branch} that for certain classes of random integer programs the simple branch-and-bound tree has polynomial size (number of nodes), with good probability. See also~\cite{borst2020integrality} for nice extensions of this direction of results. Beame et al.~\cite{beame2018stabbing} recently studied how branch-and-bound can give good upper bounds for certain SAT formulas. 

\paragraph{Lower bounds on the size of branch-and-bound trees and connections to the size of cutting-plane algorithms.} 
Jeroslow~\cite{jeroslow1974trivial} and Chv\'atal~\cite{ chvatal1980hard} present examples of integer programs where every \emph{simple} branch-and-bound algorithm for solving them has an exponential-size tree.
 However, these instances can be solved with small (polynomial-size) \emph{general} branch-and-bound trees; see Yang et al.~\cite{yang2021multivariable} and Basu et al.~\cite{basu2020complexity2}. Cook et al. \cite{cook1990complexity} present a TSP instance that requires exponential-size branch-and-\emph{cut} trees that uses simple branching (recall that branch-and-cut is branch-and-bound where one is allowed to add cuts to the intermediate LPs). Basu et al.~\cite{basu2020complexity} compare the performance of branch-and-bound with the performance of cutting-plane algorithms, providing instances where one outperforms the other and vice-versa. In another paper, Basu et al.~\cite{basu2020complexity2} compare branch-and-bound with branch-and-cut, providing instances where branch-and-cut   solves the instance in exponentially fewer nodes than branch-and-bound. They also present a result showing that the sparsity of the disjunctions can have a large impact on the size of the branch-and-bound tree required to solve a given problem. 
Beame et al. \cite{beame2018stabbing} asked as an open question whether there are superpolynomial lower bounds for general branch-and-bound algorithm. Dadush and Tiwari~\cite{dadush2020complexity} settled this in the affirmative. In particular, they show that any general branch-and-bound tree that proves the integer infeasibility of the so-called cross-polytope in $n$-dimensions has at least $\frac{2^n}{n}$ leaf nodes. They also note that the cross-polytope has an exponential number of defining inequalities, a fact crucially used in their proof, and pose the open question of whether there is such an exponential lower bound for a polytope described by a polynomial number of defining inequalities.

Concurrent to the development of our work, Fleming et al. \cite{fleming2021power} showed a fascinating relationship between general branch-and-bound proofs and cutting-plane proofs using Chv\'atal-Gomory (CG) cutting-planes:

\begin{thm}[Theorem 3.7 from \cite{fleming2021power}]
Let $P \subseteq [0,1]^n$ be an integer-infeasible polytope and suppose there is a general branch-and-bound proof of infeasibility of size $s$ and with maximum coefficient $c$. Then there is a CG proof of infeasibility of size at most
$$s(cn)^{\log s}.$$
\end{thm}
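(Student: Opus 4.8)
The plan is to transform a general branch-and-bound proof of infeasibility into a Chv\'atal--Gomory (CG) proof by a divide-and-conquer recursion over the branch-and-bound tree, in which the only costly step is the simulation of a single branching. Fix notation: regard the branch-and-bound proof as a binary tree $T$ with $s$ nodes; its root is associated with $P$; each internal node $u$ carries a split disjunction $\pi^\top x \le \pi_0 \,\vee\, \pi^\top x \ge \pi_0+1$ with $\pi \in \Z^n$, $\|\pi\|_\infty \le c$; the polytope $P_u$ at $u$ is $P$ intersected with the branching inequalities along the root--$u$ path; and every leaf $u$ has $P_u = \emptyset$. A CG proof of infeasibility of a rational polytope $Q$ is a sequence of inequalities in which each line is either a nonnegative combination of earlier lines and of the defining inequalities of $Q$ (including $0 \le x_i \le 1$), or a CG cut of such a combination, the final line being $0 \le -1$; its size is the number of lines.

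First I would handle the base case: if $Q$ is LP-infeasible, Farkas' lemma yields a nonnegative combination of the defining inequalities of $Q$ equal to $0 \le -1$, a CG proof of size $O(1)$. The technical heart is a \emph{branching-simulation lemma}: if $Q \subseteq [0,1]^n$ is defined by integer inequalities, $\pi \in \Z^n$ with $\|\pi\|_\infty \le c$, $\pi_0 \in \Z$, and $Q \cap \{\pi^\top x \le \pi_0\}$ and $Q \cap \{\pi^\top x \ge \pi_0+1\}$ have CG proofs of infeasibility of sizes $m_1$ and $m_2$, then $Q$ has one of size $\mathrm{poly}(c,n)\cdot(m_1+m_2)$. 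The enabling observation is that over the cube $[0,1]^n$ the form $\pi^\top x$ takes at most $\|\pi\|_1+1 \le cn+1$ integer values at integer points, so the disjunction removes only the single open unit interval $(\pi_0,\pi_0+1)$. To prove it, I would reinterpret the ``axiom'' $\pi^\top x \le \pi_0$ in the refutation of $Q \cap \{\pi^\top x \le \pi_0\}$ as a free parameter, turning that refutation into a CG \emph{derivation from $Q$ alone} of an inequality $\pi^\top x \ge \pi_0+1-w$ with $w = O(cn)$; the slack $w$ arises because a CG rounding step fails to commute with the linear reweighting that discharges the axiom, and each such step loses an additive integer term, but the cumulative loss is $O(cn)$ since coefficients are bounded and the domain is the unit box. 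Symmetrically, the other refutation gives a derivation from $Q$ of $\pi^\top x \le \pi_0+w'$ with $w'=O(cn)$; a further $O(cn)$ CG rounds — an interval-halving on the value of $\pi^\top x$ within the window $[\pi_0+1-w,\,\pi_0+w']$, or a cruder value-by-value elimination of the $O(cn)$ admissible integers — produce $0 \le -1$.

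Granting the lemma, I would set up the recursion. Every tree with $s$ nodes has a node $v$ whose subtree $T_v$ has between $s/3$ and $2s/3$ nodes; contracting $T_v$ to a leaf $v^\star$ gives $\widehat T$ of at most $2s/3+1$ nodes, which is a branch-and-bound proof of infeasibility of $P$ except that $v^\star$ is merely integer-infeasible — and $P_{v^\star}$ is exactly $P$ together with the path inequalities down to $v$, i.e.\ the polytope that $T_v$ refutes. Iterating the separator decomposes $T$ into $O(s)$ small atoms organized in a recursion tree of depth $O(\log s)$; the CG proof is assembled atom by atom up this recursion, with the branching-simulation lemma invoked once per level to glue the two halves of a split, contributing a multiplicative $\mathrm{poly}(c,n)$ per level. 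This yields a CG proof of infeasibility of $P$ of size $s\cdot(cn)^{O(\log s)}$; a careful accounting of the constants (splitting the leaf count as evenly as possible and charging the simulation cost only where necessary) gives the stated $s(cn)^{\log s}$.

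The step I expect to be the main obstacle is the branching-simulation lemma, and inside it the control of the accumulated rounding error when a branching inequality is discharged. CG cuts involve a floor, which is not linear, so the clean parametric substitution one would like breaks at every rounding step; one must show the total damage is only $O(cn)$, an estimate that uses both the coefficient bound $c$ and the fact that the feasible region lies inside the unit cube. It is exactly this estimate that keeps the overall blow-up at $(cn)^{O(\log s)}$ rather than something exponential in the depth of the tree (hence potentially in $s$); without the separator, a naive bottom-up simulation would multiply the per-branch cost along root-to-leaf paths and be far too lossy.
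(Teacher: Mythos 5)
This statement is quoted verbatim from Fleming et al.\ \cite{fleming2021power}; the present paper gives no proof of it, so your attempt can only be measured against the original source. Your high-level plan is the right one --- the heart of the matter really is a single-branching simulation lemma showing that a split disjunction $\pi^\top x \le \pi_0 \vee \pi^\top x \ge \pi_0+1$ costs a multiplicative $O(cn)$ in CG length, with the $cn$ coming from the range $\|\pi\|_1 \le cn$ of $\pi^\top x$ over $[0,1]^n$, and the whole game is then to make the recursion depth $\log s$ rather than the depth of the tree. But there are two genuine gaps. First, the simulation lemma itself is asserted rather than proved at exactly the point you flag as hardest. The correct mechanism is not an ``additive $O(cn)$ rounding error'' that one patches at the end: one derives the trivially valid $\pi^\top x \ge -\|\pi\|_1$ and then \emph{lifts} it one unit at a time, $\pi^\top x \ge t \leadsto \pi^\top x \ge t+1$, where each unit of progress requires replaying an entire (modified) copy of the child's refutation. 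This makes the lemma inherently \emph{asymmetric}: one child's refutation is replayed $O(cn)$ times to derive the branching halfspace as a CG consequence of $Q$, after which the other child's refutation is appended once, giving roughly $cn\cdot m_1 + m_2$ rather than $\mathrm{poly}(cn)(m_1+m_2)$. Your symmetric form, applied along the raw tree, multiplies along root--leaf paths and gives $(cn)^{\mathrm{depth}}$, which is exactly the blow-up the theorem must avoid.

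Second, the separator-based balancing does not compose in the CG world. After contracting the subtree $T_v$ to a leaf $v^\star$, the object $\widehat T$ is not a branch-and-bound refutation of $P$, and a CG refutation of $P_{v^\star}$ (a polytope carved out by \emph{all} the branching constraints on the root--$v$ path) cannot simply be ``plugged in'' at $v^\star$: CG has no disjunctions, so using that refutation inside a refutation of $P$ requires discharging each of the path constraints via the lifting lemma, and the number of such constraints can be as large as the depth of the tree --- the very quantity the separator was supposed to eliminate. The way Fleming et al.\ actually obtain the exponent $\log s$ is a heavy-path accounting rather than a separator: at every internal node, the child whose refutation gets replayed $O(cn)$ times is chosen to be the one with the \emph{smaller} subtree; since any leaf lies in the smaller subtree at most $\log_2 s$ times along its root--leaf path, each leaf contributes at most $(cn)^{\log s}$, for a total of $s(cn)^{\log s}$. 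So your proposal identifies the correct key lemma but proves neither it nor the recursion that turns it into the stated bound.
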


The following simple corollary allows one to infer exponential lower bounds for branch-and-bound trees for polytopes for which we have exponential lower bounds for CG proofs. 

\begin{cor}\label{cor:sp_eq_cp}
Let $P \subseteq [0,1]^n$ be an integer-infeasible polytope such that any CG proof of integer-infeasibility of $P$ (see Definition \ref{def:abstract_bb}) has length at least $L$. Then any general branch-and-bound proof of integer-infeasibility of $P$ with maximum coefficient $c$ has size at least 
$$L^{\frac{1}{1 + \log(cn)}}.$$
\end{cor}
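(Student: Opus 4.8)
The plan is to obtain the bound as an immediate consequence of Theorem~1 via a one-step inversion argument; no new geometric input is needed. First I would fix an arbitrary general branch-and-bound proof of integer-infeasibility of $P$, say with size $s$ and maximum coefficient $c$. Since $P \subseteq [0,1]^n$ is integer-infeasible, Theorem~1 applies and produces a Chv\'atal-Gomory proof of infeasibility of $P$ of size at most $s(cn)^{\log s}$. On the other hand, the hypothesis says that \emph{every} CG proof of integer-infeasibility of $P$ has length at least $L$ (where ``length'' and ``size'' of a CG proof refer to the same quantity, the number of derived inequalities). Combining the two yields the key inequality $L \le s(cn)^{\log s}$.

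It then remains to solve this inequality for $s$. Taking logarithms of both sides and using $\log\big((cn)^{\log s}\big) = \log s \cdot \log(cn)$ gives $\log L \le \log s + \log s \cdot \log(cn) = \big(1 + \log(cn)\big)\log s$. Because $c \ge 1$ and $n \ge 1$, the coefficient $1 + \log(cn)$ is strictly positive, so dividing through gives $\log s \ge \frac{\log L}{1 + \log(cn)}$, i.e. $s \ge L^{1/(1 + \log(cn))}$, which is the claimed bound. The inversion is legitimate because $s \mapsto s(cn)^{\log s}$ is increasing for $s \ge 1$; equivalently, substituting $s = L^{1/(1 + \log(cn))}$ into $s(cn)^{\log s}$ returns exactly $L$, independently of the base of the logarithm.

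I do not expect any genuine obstacle here: the entire mathematical content lives in Theorem~1, and the corollary is just its contrapositive together with elementary algebra. The only points worth a moment of care are (i) confirming that the complexity measure ``size'' supplied by Theorem~1 coincides with the measure ``length'' that the hypothesis lower-bounds, which is handled by the definitions referenced in the statement, and (ii) using the base of the logarithm consistently between Theorem~1 and the statement of the corollary, which, as noted above, does not affect the final expression.
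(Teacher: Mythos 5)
Your proof is correct and is exactly the intended argument: the paper gives no explicit proof of this corollary (it is presented as a "simple corollary" of the quoted theorem of Fleming et al.), and the derivation is precisely the inversion $L \le s(cn)^{\log s} \Rightarrow \log L \le (1+\log(cn))\log s \Rightarrow s \ge L^{1/(1+\log(cn))}$ that you carried out.
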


The above result makes progress in answering questions raised in Basu et al.~\cite{basu2020complexity} related to the comparison between the size of general branch-and-bound trees and the size of  CG proofs.
Moreover, Pudlak \cite{pudlak1997lower} and Dash \cite{dash2005exponential} provide exponential lower bounds for CG proofs for the ``clique vs. coloring'' problem, which is of note since this problem is defined by only polynomially-many inequalities. Thus, Corollary~\ref{cor:sp_eq_cp} taken together with results in \cite{pudlak1997lower} and \cite{dash2005exponential} also settles the question raised in Dadush and Tiwari~\cite{dadush2020complexity} as long as the maximum coefficient in the disjunctions used in the tree is bounded by a polynomial in $n$.


\subsection{Contributions of this paper and relationship to existing results}

\paragraph{Contributions.} 
We construct an instance of packing-type and a set-cover instance such that any general branch-and-bound tree that solves these instances must be of exponential (with respect to the ambient dimension) size. We note that the packing and covering instances are described using an exponential number of constraints, and so unfortunately this does not settle the question raised by Dadush and Tiwari~\cite{dadush2020complexity}. We also present a simple proof that any branch-and-bound tree proving the integer infeasibility of the cross-polytope in $n$ dimensions must have $2^n$ leaves. We then extend this result to give (high-probability) exponential lower bounds for perturbed instances of the cross-polytope where independent Gaussian noise is added to the entries of the constraint matrix. To our knowledge this is the first result that shows that a ``smoothed analysis''~\cite{roughgardenBook} polynomial upper bound on the size of branch-and-bound trees is not possible.
Finally, we show an exponential lower bound on the size of any general branch-and-bound tree for the Traveling Salesman Problem (TSP).


\paragraph{Comparison to previous results.}
We now discuss our results in the context of the recent landscape, in particular with the results of \cite{dadush2020complexity} and \cite{fleming2021power}. 
\begin{enumerate}
\item \emph{New problems with exponential lower bounds on the size of general branch-and-bound tree:} As mentioned earlier, recently Dadush and Tiwari~\cite{dadush2020complexity} provided the first  exponential lower bound on the size of general branch-and-bound tree for the cross-polytope. The other additional result, Corollary \ref{cor:sp_eq_cp} from \cite{fleming2021power}, only implies branch-and-bound lower bounds for polytopes for which we already have CG hardness. These come few and far between in the existing literature, and these instances are often a bit artificial; see \cite{pudlak1997lower} and \cite{dash2005exponential}. In contrast, in this paper we provide lower bounds for the size of general branch-and-bound tree for packing and set-cover instances, which are more natural combinatorial problems than those mentioned above.

\item \emph{Improved quality of bounds:} Dadush and Tiwari~\cite{dadush2020complexity} show that any branch-and-bound proof of infeasibility of the cross-polytope has at least $\frac{2^n}{n}$ leaves. We improve on this result by providing a simple proof that any such proof of infeasibility must have $2^n$ leaves. 

Chvatal et al. \cite{chvatal1989cutting} provide a $\frac{1}{3n}2^{n/8}$ lower bound on CG proofs for TSP. Combined with Corollary~\ref{cor:sp_eq_cp}, this can be used to show a lower bound of $2^{O\left({\frac{n}{\log c n}}\right)}$ for branch-and-bound trees for TSP using maximum coefficient $c$ for the disjunctions. We are able to achieve a stronger lower bound of $2^{\Omega(n)}$. 
    
    \item \emph{Removing the dependence on the maximum coefficient size used in the branch-and-bound proof:} The bound given in Corollary \ref{cor:sp_eq_cp} depends on the maximum coefficient size used in the branch-and-bound proof. In \cite{fleming2021power}, the authors mention that they ``view this as a step toward proving [branch-and-bound] lower bounds (with no restrictions on the [coefficient sizes])''. Our results satisfy this property, as none of the bounds presented in this work depend on the coefficients of the inequalities of the general branch-and-bound proof. 
\end{enumerate}

Finally, the results presented here can be easily combined with Theorem 1.14 of \cite{basu2020complexity2} to apply to branch-and-cut proofs. In particular, our results imply exponential lower bounds, for the polytopes shown here, on the size of branch-and-cut proofs that are allowed to branch on split disjunctions and employ any cutting-plane paradigm that is ``not sufficiently different'' from split disjunctive cuts (see \cite{basu2020complexity2} for details).

\subsection{Roadmap and notation} 

Since this paper focuses on lower bounds for general branch-and-bound trees (obviously implying lower bounds for simple branch-and-bound tree), we drop the term ``general'' for the rest of the paper. The paper is organized as follows. In Section~\ref{sec:defn} we present the necessary definitions. In Section~\ref{sec:tech_lemmas}, we present key reduction results that allow transferring lower bounds on the size of branch-and-bound trees from one optimization problem to another.  In Section~\ref{sec:combinatorial}, we present a lower bound on the size of branch-and-bound trees for packing and set covering instances.  In Section~\ref{sec:cross_poly_hard}, we present a lower bound on the size of branch-and-bound trees for the cross-polytope and some other related technical results. In Section~\ref{sec:perturbed_cp}, we show that even after adding Gaussian noise to the coefficients of the cross-polytope, with good probability branch-and-bound still requires an exponentially large tree to prove infeasibility. Finally, in Section~\ref{sec:tsp}, we use results from Section~\ref{sec:cross_poly_hard} and Section~\ref{sec:tech_lemmas} to provide an exponential lower bound on the size of branch-and-bound trees for solving TSP instances.

For a positive integer $n$, we denote the set $\{1, \dots, n\}$ as $[n]$. When the dimension is clear from context, we use the notation $\ones$ to be a vector whose every entry is $1$. Let $C$ be a set of linear constraints of the form $(\pi^i)^{\top} x \le \pi^i_0,~ \forall i \in [m]$. Then let $\{x : C\}$ denote the set of all $x \in [0,1]^n$ such that all of the constraints $C$ are valid for $x$ (i.e. the polytope defined by the set of constraints $C$). Note that for a subset of these constraints $B \subseteq C$, it holds that $\{x : B\} \supseteq \{x : C\}$. Also note that for two sets of constraints $B,C$, it holds that $\{x : B \cup C\} = \{x : B\} \cap \{x : C\}$. Given a set $S$, we denotes its convex hull by $\textup{conv}(S)$. Given a polytope $P \subseteq \mathbb{R}^n$, we denote its integer hull, that is the set $\textup{conv}(P \cap \Z^n)$, as $P_I$. We call $P$ integer-infeasible if $P \cap \Z^n = \emptyset$.

\section{Abstract branch-and-bound trees and notions of hardness}\label{sec:defn}

{In order to present lower bounds on the size of branch-and-bound (BB) trees, we simplify our analysis by removing two typical condition assumed in a BB algorithm -- (i) the requirement that the partitioning into two subproblems (which correspond to the child nodes of the given node) is done in  such a way that the optimal LP solution of the parent node is not included in either subproblem, and that (ii) branching is not done on pruned nodes. By removing these conditions, we can talk about a branch-and-bound tree independent of the underlying polytope -- it is just a full binary tree (that is, each node has $0$ or $2$ child nodes). The root-node has an empty set of \emph{branching constraints}. If a node has two child nodes, these are obtained by applying some disjunction $\pi^{\top} x \leq \pi_0 \,\vee\, \pi^{\top}x \geq \pi_0 + 1$, where each of the child nodes adds one of these constraints to its set of branching constraints together with all the branching constraints of the parent node.} Note that here $\pi$ is an \emph{integer} vector and $\pi_0$ an \emph{integer}; we call such disjunctions \emph{legal}. Note that proving lower bounds on the size of such BB trees that solves a given integer program certainly gives a lower bound on the size of BB trees that in addition require (i) and (ii).
Finally, note that since a BB tree is a full binary tree, the total number of nodes of a BB tree with $N$ leaf-nodes is $2N - 1$.

\begin{definition}\label{def:abstract_bb}
Given a branch-and-bound tree $\mathcal{T}$, applied to a polytope $P \subseteq \R^n$, and a node $v$ of the tree:
\begin{itemize}\vspace{-8pt}
\item We denote the number of nodes of the branch-and-bound tree $\mathcal{T}$ by $|\mathcal{T}|$. This is what is termed \emph{the size} of this tree.
\item We denote by $C_v$ the set of branching constraints of $v$ (as explained above, these are the constraints added by the branch-and-bound tree along the path from the root-node to $v$).
\item We call the feasible region defined by the LP relaxation $P$ and the branching constraints at node $v$ the \emph{atom} of this node, i.e., $P \cap \{x : C_v\}$ is the atom corresponding to $v$.
\item We let $\mathcal{T}(P)$ denote the {union of the atoms corresponding to the} leaves of $\mathcal{T}$ when run on polytope $P$, i.e., $\mathcal{T}(P) = \bigcup_{v \in \text{leaves}(\mathcal{T})} (P \cap \{x : C_v\})$.
\item For any $x^* \in P \setminus P_I$, we say that $\T$ \emph{separates} $x^*$ from $P$ if $x^* \not \in \conv(\T(P))$. 
\item Given a vector $c \in \mathbb{R}^n$, we say $\mathcal{T}$ solves $\max_{x \in P \cap \Z^n} \ip{c}{x}$ if for all the leaf nodes $v$ of $\T$, one of the following three conditions hold: (i) the atom of $v$ is empty, (ii) there exists at least one 
optimal solution of the linear program $\max_{x \,\in\, \textrm{atom of }v}\ip{c}{x}$ that is integral, or (iii) $\max_{x \,\in\, \textrm{atom of }v}\ip{c}{x}$ is at most the objective function value of another atom whose optimal solution is integral. 

If $P \cap \Z^n = \emptyset$, note that (ii) and (iii) are not possible, and in this case we use the term ``proves integer-infeasibility'' instead of ``solves'' the problem.
\end{itemize} 
\end{definition}


Given a polytope $P \subseteq \mathbb{R}^n$, we define its \emph{BB hardness} as
$$\textit{BBhardness}(P) = \max_{c \in \mathbb{R}^n} \left(\min \left\{| \mathcal{T}| : \mathcal{T} \textup{ solves } \max_{x \in P \cap \Z^n} \ip{c}{x}\right \} \right).$$

Our goal for most of this paper is to provide lower bounds on the BB hardness of certain polytopes. 


To get exponential lower bounds on BB hardness for some $P$, we will often present a particular point $x^* \in P \setminus P_I$ such that any $\T$ that separates $x^*$ from $P$ must have exponential size. We formalize this below. 

\begin{definition}[BBdepth]
Let $P \subseteq \R^n$ be a polytope and consider any $x^* \in P \setminus P_I$. Let $\T$ be a smallest BB tree that separates $x^*$ from $P$. Then, define $\text{BBdepth}(x^*, P)$ to be $|\T|$.
\end{definition}

\begin{definition}[BBrank]
Define $\BBrank(P) = \max_{x \in P \setminus P_I} \BBdepth(x, P)$.
\end{definition}

\begin{lemma}[BB rank lower bounds BB hardness]\label{lem:rankhardness}
Let $P \subseteq \R^n$ be a polytope. Then, there exists $c \in \R^n$ such that any BB tree solving $\max_{x \in P \cap \Z^n} \ip{c}{x}$ must have size at least $\BBrank(P)$, that is $\BBhardness(P) \geq \BBrank(P)$.
\end{lemma}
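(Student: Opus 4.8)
The plan is to show that the point $x^* \in P \setminus P_I$ witnessing $\BBrank(P)$ — call it $x^*$, with $\BBdepth(x^*, P) = \BBrank(P)$ — can be "detected" by choosing $c$ to be a normal vector of a hyperplane that separates $x^*$ from the integer hull $P_I$. Concretely, since $x^* \notin P_I = \conv(P \cap \Z^n)$ and $P_I$ is a polytope (hence closed convex), there is a hyperplane strictly separating them: there exists $c \in \R^n$ and $\delta$ with $\ip{c}{x^*} > \delta \geq \ip{c}{z}$ for every $z \in P \cap \Z^n$. I would take this $c$ as the objective. The idea is that any BB tree $\T$ that \emph{solves} $\max_{x \in P \cap \Z^n} \ip{c}{x}$ must in particular certify that the LP optimum over $P$ (which is at least $\ip{c}{x^*} > \delta$) is not actually attainable by integer points, and the only way a leaf can be pruned is via conditions (i)–(iii) of Definition~\ref{def:abstract_bb}; I will argue this forces $\T$ to separate $x^*$ from $P$, so $|\T| \geq \BBdepth(x^*,P) = \BBrank(P)$.

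The key steps, in order, are: (1) fix $x^*$ achieving the max in the definition of $\BBrank(P)$, and obtain the separating $c$ and threshold $\delta$ as above (if $P \cap \Z^n = \emptyset$ we can take any $c$ with $\ip{c}{x^*}$ finite, or handle this degenerate case separately since then $P_I = \emptyset$ and "solves" means "proves infeasibility"). (2) Let $\T$ be any BB tree that solves $\max_{x \in P \cap \Z^n}\ip{c}{x}$; I want to show $\T$ separates $x^*$ from $P$, i.e.\ $x^* \notin \conv(\T(P))$. (3) Observe that $\T(P) = \bigcup_{v \in \text{leaves}} (P \cap \{x : C_v\})$, and on each leaf atom one of (i)–(iii) holds. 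A leaf with an empty atom contributes nothing. For a nonempty leaf atom $A_v$, I claim $\max_{x \in A_v}\ip{c}{x} \leq \delta$: in case (ii) the LP over $A_v$ has an integral optimal solution $z \in P \cap \Z^n$, so the optimum is $\ip{c}{z} \leq \delta$; in case (iii) the optimum over $A_v$ is at most that of some other atom whose optimum is integral, which is again $\leq \delta$ by the same reasoning. (4) Hence every point of $\T(P)$ satisfies $\ip{c}{x} \leq \delta$, so $\conv(\T(P))$ lies in the halfspace $\{\ip{c}{x} \leq \delta\}$, which excludes $x^*$; thus $\T$ separates $x^*$ from $P$. (5) By definition of $\BBdepth$ as the size of the \emph{smallest} separating tree, $|\T| \geq \BBdepth(x^*, P) = \BBrank(P)$. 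Since this holds for every solving $\T$, $\BBhardness(P) \geq \min\{|\T| : \T \text{ solves}\} \geq \BBrank(P)$, as $\BBhardness$ takes a max over $c$.

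The main obstacle is step (3), specifically handling condition (iii) cleanly: I must make sure the chain "atom $v$'s optimum $\leq$ atom $w$'s optimum, and $w$'s optimum is integral hence $\leq \delta$" is not circular and actually terminates at an atom satisfying (ii). Since (iii) references "another atom whose optimal solution is integral," this is really a direct bound — the referenced atom satisfies (ii), not (iii) — so there is no infinite regress; but I should state this carefully. A secondary subtlety is the edge case where some leaf atom is unbounded in the direction $c$: condition (ii) would require an integral optimum which cannot exist if the LP is unbounded, and (iii) compares to a finite integral value, so an unbounded-above leaf atom cannot be pruned, meaning a solving tree never has such a leaf — worth a sentence. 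Finally, I should double-check the trivial direction: if $\BBrank(P) = 1$ (i.e.\ $P = P_I$, no separating needed) the bound is vacuous and any single-node tree works, consistent with the statement.
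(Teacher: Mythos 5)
Your proof is correct and takes essentially the same route as the paper, which simply phrases it contrapositively: any tree of size less than $\BBrank(P)$ fails to separate the witness $x^*$, hence has a leaf whose LP value exceeds $\max_{x \in P_I}\ip{c}{x}$ and so cannot be pruned under (i)--(iii). Your extra care with the non-circularity of condition (iii) and the integer-infeasible case is sound but adds nothing beyond what the paper's one-line version already implicitly relies on (and unboundedness is a non-issue since $P$ is a polytope).
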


\begin{proof}
Let $x^* \in \argmax_{x \in P \setminus P_I} \BBdepth(x, P)$, so that $\BBrank(P) = \BBdepth(x^*, P)$. Since $x^*$ does not belong to the convex set $P_I$, by the hyperplane separation theorem~\cite{barvinok} there exists $c$ with the separation property $\ip{c}{x^*} > \max_{x \in P_I} \ip{c}{x}$. By choice of $x^*$, for any BB tree $\T$ with $|\T| < \BBrank(P)$ it holds that $x^* \in \conv(\T(P))$.  Then such tree $\T$ must have a leaf whose optimal LP solution has value at least $\ip{c}{x^*} > \max_{x \in P_I} \ip{c}{x}$, and therefore must still  not be pruned, showing that $\T$ does not solve $\max_{x \in P \cap \Z^n} \ip{c}{x}$.
\end{proof}

We now show that, under some conditions, the reverse of this kind of relationship also holds. 
We will use this reverse relationship to prove the BB hardness of optimizing over an integer feasible polytope given the BB hardness of proving the infeasibility of another ``smaller" polytope.

\begin{lemma}[Infeasibility-to-optimization]\label{lem:inf_to_opt}
Let $P \subseteq \R^n$ be a polytope and $\ip{c}{x} \le \delta$ be a facet defining inequality of $P_I$ {that is not valid for $P$}. Assume that the affine hull of $P$ and $P_I$ are the same. Then, {there exists $\e_0 > 0$ such that for every $\e \in (0, \e_0]$} 
$$\BBrank(P) \geq \BBhardness(\{x \in P : \ip{c}{x} \geq \delta + \e\}).$$
\end{lemma}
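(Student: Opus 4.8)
The plan is to show that any BB tree $\T$ that separates a suitably-chosen $x^*$ from $P$ can be "recycled" into a BB tree that proves infeasibility of the smaller polytope $Q := \{x \in P : \ip{c}{x} \geq \delta + \e\}$ with roughly the same size, so that $\BBrank(P) \geq \BBdepth(x^*, P) \geq$ (size needed to prove $Q$ infeasible). Combined with Lemma~\ref{lem:rankhardness}, which lets us pass from $\BBrank$ to $\BBhardness$, this yields the claimed inequality. The key is choosing $x^*$ to lie on the facet $\{\ip{c}{x} = \delta\}$ of $P_I$ but strictly inside $P$ in the direction of increasing $\ip{c}{\cdot}$: since $\ip{c}{x} \le \delta$ is valid for $P_I$ but not for $P$, and the affine hulls agree, there is a point $\bar x \in P$ with $\ip{c}{\bar x} > \delta$; picking $x^*$ on the segment between $\bar x$ and a relative-interior point of the facet (and choosing $\e_0$ accordingly) we can guarantee $x^* \in P \setminus P_I$ and that $x^*$ is separated from $P_I$ precisely by pushing past the slab $Q$.

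First I would fix the constants: let $\e_0$ be small enough that $Q = \{x \in P : \ip{c}{x} \ge \delta + \e\}$ is nonempty for all $\e \in (0,\e_0]$ (possible since $\ip{c}{\cdot}$ is unbounded above $\delta$ on $P$), and is integer-infeasible (it sits strictly above the facet $\ip{c}{x}\le\delta$ of $P_I$, which contains all of $P\cap\Z^n$ on its $\le$ side; here one uses that $\ip{c}{x}\le\delta$ is valid for $P_I = \conv(P\cap\Z^n)$, so every integer point of $P$ satisfies it, hence none lies in $Q$). Second, I would choose $x^* \in P$ with $\ip{c}{x^*}$ strictly between $\delta$ and $\delta+\e$, lying in the affine hull so that combined with the facet-defining (hence separating) inequality $\ip{c}{x}\le\delta$ for $P_I$ we get $x^* \in P \setminus P_I$. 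Third — the crux — given any BB tree $\T$ that separates $x^*$ from $P$, I would argue it can be transformed into a tree $\T'$ proving $Q$ infeasible with $|\T'| \le |\T|$ (or at most a small additive/multiplicative overhead): each leaf atom $P \cap \{x : C_v\}$ of $\T$ that contributes to $\conv(\T(P))$ must avoid $x^*$; intersecting everything with the valid halfspace $\ip{c}{x} \ge \delta+\e$ (which we may think of as already "present" in $Q$) these atoms become the atoms of the same tree run on $Q$, and since $x^* \notin \conv(\T(P))$ while $x^*$ lies below the slab, one shows $\T(Q) = \emptyset$, i.e. every leaf atom of $\T$ on $Q$ is empty, so $\T$ itself proves $Q$ infeasible. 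Finally, invoking Lemma~\ref{lem:rankhardness} on $Q$ gives $\BBhardness(Q) \ge \BBrank(Q) \le$ ... — actually more directly: $\BBhardness(Q)$ equals the min size over all $c'$-solving trees, and since $Q$ is infeasible this is the min size of an infeasibility proof of $Q$, which by the transformation is at most $\BBdepth(x^*,P) \le \BBrank(P)$.

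The main obstacle I anticipate is making the transformation step fully rigorous: one must verify that the BB tree $\T$, which was designed only to "separate" $x^*$ (a convex-hull condition on leaf atoms), actually certifies emptiness of every relevant atom when restricted to $Q$ — the gap between "$x^*\notin\conv(\bigcup_v \text{atom}_v)$" and "each atom intersected with the slab is empty" needs the geometry of $x^*$ being just below the slab and the fact that any nonempty atom of $\T$ on $Q$ would, together with a point of $P$ just below the facet in the same atom, convex-combine to recover $x^*$. Handling the leaves whose atoms are empty already in $P$ (these stay empty), versus nonempty-in-$P$-but-must-empty-in-$Q$, and confirming no increase in tree size, is where the care lies; a secondary nuisance is pinning down $\e_0$ uniformly and confirming that the needed point $\bar x \in P$ with $\ip{c}{\bar x} > \delta + \e$ exists for the chosen range, which follows from $\ip{c}{x}\le\delta$ not being valid for $P$ together with convexity.
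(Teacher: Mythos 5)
Your high-level skeleton matches the paper's: argue the contrapositive that any tree $\T$ too small to prove infeasibility of $Q=\{x\in P:\ip{c}{x}\ge\delta+\e\}$ leaves a surviving point $g\in\T(Q)\subseteq\T(P)$, and use that survivor to certify that a pre-chosen $x^*$ with $\delta<\ip{c}{x^*}$ still lies in $\conv(\T(P))$, so $\BBrank(P)\ge\BBdepth(x^*,P)\ge\BBhardness(Q)$. But the step you yourself flag as the ``main obstacle'' is exactly the crux, and your sketch of it is both incomplete and partly wrong. The mechanism you describe --- that a nonempty atom of $\T$ on $Q$ ``together with a point of $P$ just below the facet \emph{in the same atom}'' convex-combines to recover $x^*$ --- does not work: there is no reason a point of $P$ near the facet lies in the same atom as $g$, nor that it lies in $\T(P)$ at all. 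The correct mechanism, used in the paper, is that the facet $\{x\in P_I:\ip{c}{x}=\delta\}$ contains $d=\dim(P_I)$ affinely independent \emph{integral} vertices $s^1,\dots,s^d$; being integer points of $P$ they survive in $\T(P)$ for every tree, so $\conv(s^1,\dots,s^d,g)\subseteq\conv(\T(P))$. This is where the hypotheses that the inequality is facet-defining and that $\mathrm{aff}(P)=\mathrm{aff}(P_I)$ enter, and your proposal never uses either.

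The remaining, genuinely nontrivial ingredient is that there exists a \emph{single} point $x^*$ with $\ip{c}{x^*}>\delta$ lying in $\bigcap_{g}\conv(s^1,\dots,s^d,g)$ as $g$ ranges over all of $Q$ (equivalently, over all possible survivors produced by all small trees). The paper imports this as a technical lemma of Dash et al.\ (Lemma~\ref{lem:height}), whose proof uses that $Q$ is bounded and bounded away from the hyperplane by $\e$. Your proposed fix --- take $x^*$ on a short segment from a relative-interior point of the facet toward some fixed $\bar x\in P$ with $\ip{c}{\bar x}>\delta$ --- can in fact be pushed through (for $t$ small enough relative to $\e$ and $\mathrm{diam}(P)$, such an $x^*$ lies in every cone $\conv(s^1,\dots,s^d,g)$), but you neither carry out that verification nor identify it as the precise statement needed; as written, a survivor $g$ on the ``opposite side'' of the facet from $\bar x$ is not addressed. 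Until that uniform-$x^*$ claim is proved (or Lemma~\ref{lem:height} is invoked), the argument has a hole at its central step.
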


Before we can present the proof of Lemma \ref{lem:inf_to_opt}
we require a technical lemma from~\cite{dash2015relative}. The full-dimensional case $L = \R^n$ is Lemma 3.1 of~\cite{dash2015relative}, and the general case follows directly by applying it to the affine subspace $L$.


\begin{lemma}[\cite{dash2015relative}]\label{lem:height}
Consider an affine subspace $L \subseteq \R^n$ and a hyperplane $H = \{x \in \R^n :\ip{c}{x} = \delta\}$ that does not contain $L$. Consider $\dim(L)$ affinely independent points $s^1, s^2 \dots, s^{\dim(L)}$ in $L \cap H$. Consider $\delta' > \delta$ and let $G$ be a bounded and non-empty subset of $L \cap \{x \in \mathbb{R}^n: \ip{c}{x} \geq \delta'\}$. Then there exists a point $x$ in $\bigcap_{g \in G} \conv(s^1, \dots, s^{\dim(L)}, g)$ satisfying the strict inequality $\ip{c}{x} > \delta$. 
\end{lemma}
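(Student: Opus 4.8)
The plan is to produce a single point that sits just above the common base of all the pyramids $\conv(s^1,\dots,s^{\dim(L)},g)$ and to show that it lies in every one of them. Write $d = \dim(L)$ and $h(x) = \ip{c}{x}$. Since $H$ does not contain $L$, the affine function $h$ is non-constant on $L$, so $L\cap H$ is an affine hyperplane of $L$ of dimension $d-1$; the $d$ affinely independent points $s^1,\dots,s^d$ therefore span $L\cap H$, and $S := \conv(s^1,\dots,s^d)$ is a $(d-1)$-simplex whose centroid $\bar s := \frac1d\sum_{i} s^i$ lies in the relative interior of $S$. Because $h$ is non-constant on $L$, I would also fix a direction $e$ parallel to $L$ (i.e. $e \in L - L$) with $h(e) = 1$. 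The candidate point will be $x_\e := \bar s + \e e$ for a suitably small $\e > 0$; note $x_\e \in L$ and $h(x_\e) = \delta + \e > \delta$, so the strict inequality in the conclusion holds automatically, and it only remains to choose $\e$ so that $x_\e \in \conv(s^1,\dots,s^d, g)$ for every $g \in G$.

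First I would verify that, for each fixed $g \in G$, the point $x_\e$ admits a natural representation as a convex combination of a point of $S$ and the apex $g$. Setting $\lambda_g := \e / (h(g) - \delta)$ and $p_g := (x_\e - \lambda_g g)/(1 - \lambda_g)$, a direct computation gives $(1-\lambda_g) p_g + \lambda_g g = x_\e$ and, using $h(x_\e) = \delta + \e$, also $h(p_g) = \delta$. Hence $p_g$ is an affine combination of points of $L$ with $h(p_g) = \delta$, so $p_g \in L \cap H = \mathrm{aff}(S)$. Moreover $h(g) \ge \delta' > \delta$ forces $\lambda_g \in (0, \e/(\delta'-\delta)]$, so $\lambda_g \in [0,1]$ for small $\e$. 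Thus as soon as I can guarantee $p_g \in S$ (and not merely in its affine hull), the identity $x_\e = (1-\lambda_g)p_g + \lambda_g g$ certifies $x_\e \in \conv(s^1,\dots,s^d,g)$.

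The crux, and the place where the boundedness of $G$ is essential, is to make the choice of $\e$ uniform over all $g \in G$. Here I would estimate $\|p_g - \bar s\|$: expanding $p_g - \bar s = \big(\e e + \lambda_g(\bar s - g)\big)/(1-\lambda_g)$ and using $\lambda_g \le \e/(\delta'-\delta)$ together with $\sup_{g \in G}\|g\| < \infty$ (boundedness), this distance is bounded by a quantity that tends to $0$ as $\e \to 0$ independently of $g$. Since $\bar s$ lies in the relative interior of $S$, there is a radius $r > 0$ with $\{x \in \mathrm{aff}(S) : \|x - \bar s\| < r\} \subseteq S$; choosing $\e$ small enough that the uniform bound on $\|p_g - \bar s\|$ drops below $r$ (and $\lambda_g < 1$) then yields $p_g \in S$ for every $g \in G$ simultaneously. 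Taking any such $\e$ and $x = x_\e$ completes the argument. I expect this uniformity step to be the only delicate point: without the hypotheses that $G$ is bounded and separated from $H$ by $\delta' > \delta$, the correction term $\lambda_g g$ need not be uniformly small and $p_g$ could leave $S$ as $g$ ranges over $G$. (Alternatively, one could choose an affine coordinate system on $L$ to reduce directly to the full-dimensional case $L = \R^{d}$, but the direct argument above handles the affine setting in one pass.)
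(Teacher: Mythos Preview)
Your argument is correct. The candidate point $x_\varepsilon = \bar s + \varepsilon e$ is well chosen, the computation showing $h(p_g)=\delta$ and hence $p_g\in L\cap H=\mathrm{aff}(S)$ is clean, and the uniform bound $\|p_g-\bar s\|\le \big(\varepsilon\|e\|+\tfrac{\varepsilon}{\delta'-\delta}\sup_{g\in G}\|\bar s-g\|\big)/(1-\tfrac{\varepsilon}{\delta'-\delta})$ correctly exploits both the boundedness of $G$ and the strict separation $\delta'>\delta$ to push $p_g$ into the relative interior of $S$ for small $\varepsilon$. One cosmetic point: when you write ``$h(e)=1$'' you are implicitly using that $h(x)=\ip{c}{x}$ is linear (not merely affine), so $h$ restricted to the linear subspace $L-L$ makes sense and is nonzero because $H$ does not contain $L$; this is fine, but worth stating explicitly.

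As for comparison with the paper: the paper does not actually prove this lemma. It is imported from~\cite{dash2015relative}, with the remark that the full-dimensional case $L=\R^n$ is Lemma~3.1 there and that the general case follows by passing to the affine subspace $L$. In other words, the paper takes precisely the ``alternative'' route you mention in your last sentence --- reduce to $L=\R^d$ via an affine coordinate system and invoke the known case --- whereas you give a self-contained direct argument in the affine setting. Your approach has the advantage of being explicit about where each hypothesis (boundedness of $G$, the gap $\delta'-\delta>0$, the centroid lying in the relative interior) is used; the paper's route is shorter but outsources the work.
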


%

\begin{proof}[Proof of Lemma \ref{lem:inf_to_opt}]
	Let $L$ be the affine hull of $P_I$. Then there exist $d := \dim(P_I) = \dim(L)$ affinely independent vertices of $\{x \in P_I : \ip{c}{x} = \delta\}$. Let $s^1, \dots, s^d$ be $d$ such affinely independent vertices and note that since they are vertices of $P_I$, they are all integral. {Let $\e_0 := (\max_{x \in P} \ip{c}{x}) - \delta$, and notice that since the inequality $\ip{c}{x} \le \delta$ is not valid for $P$ we have $\e_0 > 0$. Then for any $\e \in (0,\e_0]$,} let $G := \{x \in P : \ip{c}{x} \geq \delta + \e\}$, {which is then non-empty}. Also notice that $G$ is a bounded set, since $P$ is bounded. Let $N := \BBhardness(G)$.

Let $\T$ be a BB tree such that $|\T| < N$. Then we have that $\T(G) \neq \emptyset$, that is, there exists $x^*(\T) \in \T(G)$. In particular $x^*(\T) \in G$. Moreover, since $G \subseteq P$, we have $\T(G) \subseteq \T(P)$  (see Lemma~\ref{lem:monotonicity} in the next section for a formal proof of this), and so we have $x^*(\T) \in  \T(P)$. Also note that since $s^1, \dots, s^d \in P \cap \mathbb{Z}^n$, we have that these points also belong to $\T(P)$. Thus, $$\conv\left(s^1, \dots, s^d, x^*(\T)\right)  \subseteq \conv(\T(P)).$$

Now applying Lemma~\ref{lem:height},  with $\delta' = \delta + \e$, we have that there exists $x^*$ such that
\begin{eqnarray}\label{eq:height_eq}
x^* \in \bigcap_{ \T: |\T| < N} \conv\left(s^1, \dots, s^d, x^*(\T)\right) \subseteq \bigcap_{ \T: |\T| < N} \conv(\T(P))
\end{eqnarray}
and such that {$\ip{c}{x^*} > \delta$}. Clearly, $x^* \not \in P_I$, since $\ip{c}{x} \leq \delta$ is a valid inequality for $P_I$. Thus, since \eqref{eq:height_eq} implies $x^* \in \conv(\T(P))$ for all $\T$ with $|\T|< N$, we have that $\BBdepth(x^*, P) \geq N$ and consequently, $\BBrank(P) \geq N$.
\end{proof}

\section{Framework for BB hardness reductions}\label{sec:tech_lemmas}


{In this section we present key reduction results that allow transferring lower bounds on the size of BB trees from one optimization problem to another.} We begin by showing monotonicity of the operator $\mathcal{T} (\cdot)$.

\begin{lemma}[Monotonicity of leaves]\label{lem:monotonicity}
Let $Q \subseteq P \subseteq \R^n$ be polytopes. Then $\T(Q) \subseteq \T(P)$. 
\end{lemma}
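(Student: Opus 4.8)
The plan is to unwind the definition of the operator $\mathcal{T}(\cdot)$ and observe that the containment is essentially immediate, because a BB tree $\mathcal{T}$ is an abstract object: it is a full binary tree together with a set of branching constraints $C_v$ attached to each node $v$, and this data does not depend on which polytope the tree is ``run on''. In particular, the set $\text{leaves}(\mathcal{T})$ and the constraint sets $\{C_v : v \in \text{leaves}(\mathcal{T})\}$ are the same whether we evaluate $\mathcal{T}(Q)$ or $\mathcal{T}(P)$.

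First I would recall that, by Definition~\ref{def:abstract_bb},
$$\mathcal{T}(Q) = \bigcup_{v \in \text{leaves}(\mathcal{T})} \big(Q \cap \{x : C_v\}\big), \qquad \mathcal{T}(P) = \bigcup_{v \in \text{leaves}(\mathcal{T})} \big(P \cap \{x : C_v\}\big).$$
Then I would fix an arbitrary leaf $v$ and argue atom-by-atom: since $Q \subseteq P$, intersecting both sides with the polytope $\{x : C_v\}$ preserves the inclusion, so $Q \cap \{x : C_v\} \subseteq P \cap \{x : C_v\}$. Finally, taking the union over all $v \in \text{leaves}(\mathcal{T})$ of these term-wise inclusions yields $\mathcal{T}(Q) \subseteq \mathcal{T}(P)$, which is exactly the claim.

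There is no real obstacle here; the only thing to be careful about is making explicit the point that the tree's combinatorial structure and branching constraints are fixed independently of the input polytope, so that the two unions are indexed by the same leaf set with the same $C_v$'s — otherwise the term-by-term comparison would not make sense. This lemma is used (as already invoked in the proof of Lemma~\ref{lem:inf_to_opt}) precisely in this form, namely that feeding a smaller polytope to the same tree can only shrink the union of leaf atoms.
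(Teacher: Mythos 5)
Your proposal is correct and is essentially the same argument as the paper's: the paper factors $\T(Q) = Q \cap \bigcup_v \{x : C_v\}$ and applies $Q \subseteq P$ once, while you apply it term-by-term to each atom before taking the union — the two are trivially equivalent. Your explicit remark that the tree's leaf set and branching constraints are independent of the input polytope is a fair point of care, though the paper leaves it implicit.
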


\begin{proof}
For any leaf $v \in \T$, recall that $C_v$ is the set of branching constraints of $v$. Then $\T(Q) = \bigcup_{v \in \text{leaves}(\T)} (Q \cap \{x : C_v\}) = Q \cap \bigcup_{v \in \text{leaves}(\T)} \{x : C_v\} \subseteq P \cap \bigcup_{v \in \text{leaves}(\T)} \{x : C_v\} = \bigcup_{v \in \text{leaves}(\T)} (P \cap \{x : C_v\}) = \T(P)$.
\end{proof}

The following corollary follows easily from Lemma \ref{lem:monotonicity}. In particular, consider {a} smallest BB tree $\T$ that separates $x^*$ from $P$. By Lemma \ref{lem:monotonicity}, the same tree, when applied to $Q \subseteq P$, will not have $x^*$ in the convex hull of its leaves and therefore separates $x^*$ from $Q$.

\begin{cor}[Monotonicity of depth]\label{cor:monoton_depth}
Let $Q \subseteq P \subseteq \R^n$ be polytopes. Then for every $x^* \in (Q \setminus Q_I) \cap (P \setminus P_I) = Q \setminus P_I$ we have
$$\BBdepth(x^*, Q) \le \BBdepth(x^*, P).$$
\end{cor}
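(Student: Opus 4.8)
The plan is to leverage Lemma \ref{lem:monotonicity} directly, essentially by the argument already sketched in the paragraph preceding the statement. First I would unpack what $\BBdepth(x^*, P)$ means: let $\T$ be a smallest BB tree that separates $x^*$ from $P$, so $|\T| = \BBdepth(x^*, P)$ and $x^* \notin \conv(\T(P))$. The goal is to show this same tree $\T$ also separates $x^*$ from $Q$, which would immediately give $\BBdepth(x^*, Q) \le |\T| = \BBdepth(x^*, P)$, since $\BBdepth(x^*, Q)$ is the size of a \emph{smallest} such tree and $\T$ is \emph{a} (not necessarily smallest) tree separating $x^*$ from $Q$.

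The key step is the inclusion $\conv(\T(Q)) \subseteq \conv(\T(P))$. By Lemma \ref{lem:monotonicity}, since $Q \subseteq P$ we have $\T(Q) \subseteq \T(P)$, and taking convex hulls of both sides preserves the inclusion. Therefore $x^* \notin \conv(\T(P))$ forces $x^* \notin \conv(\T(Q))$, i.e., $\T$ separates $x^*$ from $Q$. A small bookkeeping point worth stating explicitly is that the hypothesis $x^* \in Q \setminus P_I$ guarantees $x^*$ is a legitimate point to which the definition of $\BBdepth(\cdot, Q)$ applies: we need $x^* \in Q \setminus Q_I$, and indeed $Q \subseteq P$ implies $Q_I = \conv(Q \cap \Z^n) \subseteq \conv(P \cap \Z^n) = P_I$, so $x^* \notin P_I$ yields $x^* \notin Q_I$; combined with $x^* \in Q$ this gives $x^* \in Q \setminus Q_I$. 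This also justifies the set identity $(Q \setminus Q_I) \cap (P \setminus P_I) = Q \setminus P_I$ asserted in the statement.

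I do not anticipate a genuine obstacle here — the corollary is a routine consequence of monotonicity of $\T(\cdot)$ together with monotonicity of the convex hull operator. The only thing to be careful about is not to conflate "a smallest tree separating $x^*$ from $P$" with "a smallest tree separating $x^*$ from $Q$": the direction of the inequality comes precisely from the fact that a tree optimal for the larger polytope $P$ is merely feasible (not necessarily optimal) for the smaller polytope $Q$, so its size upper-bounds $\BBdepth(x^*, Q)$.
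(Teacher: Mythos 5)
Your argument is correct and matches the paper's own justification exactly: take a smallest tree $\T$ separating $x^*$ from $P$, apply Lemma \ref{lem:monotonicity} to get $\conv(\T(Q)) \subseteq \conv(\T(P))$, and conclude $\T$ also separates $x^*$ from $Q$. Your additional remark that $Q_I \subseteq P_I$ (justifying the set identity in the statement) is a correct and worthwhile bookkeeping point that the paper leaves implicit.
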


Inspired by the lower bounds for cutting-plane rank from~\cite{chvatal1989cutting}, we show that integral affine transformations conserve the hardness of separating a point via branch-and-bound, i.e. they conserve BBdepth. Then, we give a condition where BBrank is also conserved. These  will be used to obtain lower bounds in the subsequent sections. 

	We say that $f: \R^n \rightarrow \R^m$ is an \emph{integral affine function} if it has the form $f(x) = Cx + d$, where $C \in \Z^{m \times n}, d \in \Z^m$.

\begin{lemma}[Simulation for integral affine transformations]\label{lem:affine}
Let $P \subseteq \R^n$ be a polytope, $f : \R^n \rightarrow \R^m$ an integral affine function, and denote $Q := f(P) \subseteq \R^m$. Let $\hat{\T}$ be any BB tree. Then, there exists a BB tree $\T$ such that $|\T| = |\hat{\T}|$ and
$$f(\T(P)) \subseteq \hat{\T}(Q).$$
\end{lemma}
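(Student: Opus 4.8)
The plan is to build the BB tree $\T$ on $\R^m$ by mimicking, node by node, the branching decisions of $\hat{\T}$ on $\R^n$, pulling each disjunction back through the affine map $f$. Concretely, if a node of $\hat{\T}$ branches on the disjunction $\pi^{\top} y \le \pi_0 \,\vee\, \pi^{\top} y \ge \pi_0 + 1$ (here $y \in \R^m$), then at the corresponding node of $\T$ I will branch on the pulled-back disjunction obtained by substituting $y = f(x) = Cx + d$: that is, on $(\pi^{\top} C) x \le \pi_0 - \pi^{\top} d \,\vee\, (\pi^{\top} C) x \ge \pi_0 - \pi^{\top} d + 1$. The key point to check is that this is a \emph{legal} disjunction in the sense of Section~\ref{sec:defn}: since $C \in \Z^{m \times n}$ and $\pi \in \Z^m$, the vector $\pi^{\top} C$ is an integer vector, and since $d \in \Z^m$ and $\pi_0 \in \Z$, the scalar $\pi_0 - \pi^{\top} d$ is an integer. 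So the pulled-back disjunction is of the required form, and the resulting tree $\T$ has exactly the same shape as $\hat{\T}$, hence $|\T| = |\hat{\T}|$.

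Next I would set up the correspondence between leaves. Because $\T$ and $\hat{\T}$ have the same tree structure, there is a canonical bijection between the nodes of $\T$ and those of $\hat{\T}$; write $v \leftrightarrow \hat{v}$ for it, and let $C_v$ (resp. $\hat{C}_{\hat{v}}$) denote the branching constraints of $v$ in $\T$ (resp. of $\hat{v}$ in $\hat{\T}$). The crucial invariant, proved by induction along the root-to-leaf path, is that for every node $v$,
$$x \in \{x : C_v\} \iff f(x) \in \{y : \hat{C}_{\hat{v}}\},$$
or at least the forward implication $f(\{x : C_v\}) \subseteq \{y : \hat{C}_{\hat v}\}$, which is all I need. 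This follows immediately from the construction: each branching constraint added along the path in $\T$ is, by definition, the pullback under $f$ of the corresponding constraint added along the path in $\hat{\T}$, and $a^\top x \le b \iff (a^\top C) x \le b - a^\top d$ after substituting $y = Cx+d$ tracks exactly whether $f(x)$ satisfies $a^\top y \le b$.

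With the invariant in hand, the conclusion is short. For any leaf $v$ of $\T$ (corresponding to leaf $\hat v$ of $\hat\T$) and any $x \in P \cap \{x : C_v\}$, we have $f(x) \in f(P) = Q$ and, by the invariant, $f(x) \in \{y : \hat C_{\hat v}\}$; hence $f(x) \in Q \cap \{y : \hat C_{\hat v}\}$, which is the atom of $\hat v$. Taking the union over all leaves gives
$$f(\T(P)) = f\!\left(\bigcup_{v \in \text{leaves}(\T)} (P \cap \{x : C_v\})\right) = \bigcup_{v \in \text{leaves}(\T)} f(P \cap \{x : C_v\}) \subseteq \bigcup_{\hat v \in \text{leaves}(\hat\T)} (Q \cap \{y : \hat C_{\hat v}\}) = \hat\T(Q),$$
as required. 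I do not expect a real obstacle here; the only points demanding care are (a) verifying legality of the pulled-back disjunctions (integrality of $\pi^\top C$ and $\pi_0 - \pi^\top d$), and (b) being careful that one only gets the containment $f(\T(P)) \subseteq \hat\T(Q)$ and not equality in general — equality can fail because $f$ need not be injective and $f(P)=Q$ only as sets, so a point of $Q$ in some atom of $\hat\T$ need not be the image of a point of $P$ in the matching atom of $\T$. The statement only claims the inclusion, so this is fine.
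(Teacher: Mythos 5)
Your proposal is correct and follows essentially the same route as the paper: pull each disjunction $\pi^{\top}y \le \pi_0 \,\vee\, \pi^{\top}y \ge \pi_0+1$ back through $f(x)=Cx+d$ to the legal disjunction $(\pi^{\top}C)x \le \pi_0 - \pi^{\top}d \,\vee\, (\pi^{\top}C)x \ge \pi_0 - \pi^{\top}d + 1$, check integrality of $C^{\top}\pi$ and $\pi_0 - \pi^{\top}d$, and push forward atom by atom to get the inclusion. Your closing remark about why one only gets containment rather than equality is a correct observation, consistent with the paper.
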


\begin{proof}
Let $f(x) = Cx + d$ with $C \in \Z^{m \times n}, d \in \Z^m$. Given a BB tree $\hat{\T}$, we construct a BB tree $\T$ with the desired properties as follows: $\T$ has the same nodes as $\hat{\T}$ but each branching constraint $\ip{a}{y} \leq b$ of $\hat{\T}$ is replaced by the constraint $\ip{C^T a}{x} \leq b - \ip{a}{d}$ in $\T$. 

First we verify that $\mathcal{T}$ only uses legal disjunctions: First note that $C^T a \in \mathbb{Z}^n$ and $ b - \ip{a}{d} \in \mathbb{Z}$. If a node of $\hat{\T}$ has $\ip{a}{y} \leq b \ \vee \  \ip{a}{y} \geq  b + 1$ as its disjunction, the corresponding node in $\T$ has the disjunction $\ip{C^T a}{x} \leq b - \ip{a}{d}~\vee~\ip{-C^Ta}{x} \leq -b  -1 - \ip{-a}{d}$ (notice $\ip{a}{y} \geq  b + 1 \,\equiv\, \ip{-a}{y} \leq  -b - 1$). Since the second term in the latter disjunction is equivalent to  $\ip{C^Ta}{x} \geq b  - \ip{a}{d} + 1$, we see that this disjunction is a legal one. 

	To conclude the proof, we show that $f(\T(P)) \subseteq \hat{\T}(Q)$. Let $S$ be the atom of a leaf $v$ of $\T$ and $\hat{S}$ be the atom of the corresponding leaf $\hat{v}$ of $\hat{\T}$. We show that for all $x \in S$, it must be that $f(x) \in \hat{S}$.  To see this, notice that if $x$ satisfies an inequality $\ip{C^T a}{x} \leq b - \ip{a}{d}$ then $f(x)$ satisfies $\ip{a}{f(x)} \leq b$:
$$\ip{a}{f(x)} = \ip{a}{Cx + d}= \ip{a}{Cx} + \ip{a}{d} = \ip{C^T a}{x} + \ip{a}{d} \leq b.$$ Since any $x \in S$ belongs to $P$ and satisfies all the branching constraints of the leaf $v$, this implies $f(x)$ belongs to $Q$ and satisfies all the branching constraints of the leaf $\hat{v}$, and hence belongs to the atom $\hat{S}$. {Therefore, $f(S) \subseteq \hat{S}$. Taking a union over all leaves/atoms then gives $f(\T(P)) \subseteq \hat{\T}(Q)$ as desired.} 
\end{proof}

\begin{cor}\label{cor:affine_infeas}
Let $P$, $Q$, and $f$ satisfy the assumptions of Lemma \ref{lem:affine}. Further, suppose $P$ and $Q$ are both integer-infeasible. Then, 
$$\BBhardness(Q) \geq \BBhardness(P).$$
\end{cor}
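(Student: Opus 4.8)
The plan is to deduce Corollary~\ref{cor:affine_infeas} directly from Lemma~\ref{lem:affine} together with the definition of $\BBhardness$ for integer-infeasible polytopes. Recall that when a polytope is integer-infeasible, a BB tree ``solves'' the problem (equivalently, ``proves integer-infeasibility'') exactly when every leaf atom is empty; that is, $\T(P) = \emptyset$. Moreover, since $P \cap \Z^n = \emptyset$, the objective vector $c$ plays no role, so $\BBhardness(P) = \min\{|\T| : \T(P) = \emptyset\}$, and similarly for $Q$.

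First I would fix a smallest BB tree $\hat\T$ that proves integer-infeasibility of $Q$, so that $\hat\T(Q) = \emptyset$ and $|\hat\T| = \BBhardness(Q)$. Applying Lemma~\ref{lem:affine} to $P$, $f$, $Q = f(P)$, and this tree $\hat\T$, I obtain a BB tree $\T$ with $|\T| = |\hat\T|$ and $f(\T(P)) \subseteq \hat\T(Q) = \emptyset$. Since $f$ is a function, $f(\T(P)) = \emptyset$ forces $\T(P) = \emptyset$; hence $\T$ proves integer-infeasibility of $P$. Therefore $\BBhardness(P) \le |\T| = |\hat\T| = \BBhardness(Q)$, which is the claimed inequality.

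There is essentially no obstacle here: the only point requiring a moment's care is the translation between the definition of ``solves $\max_{x \in P \cap \Z^n}\ip{c}{x}$'' and the condition $\T(P) = \emptyset$ in the infeasible case, which the excerpt already spells out after Definition~\ref{def:abstract_bb}, and the trivial observation that a set-valued image being empty implies the preimage-restricted-to-that-atom is empty. One might also note explicitly that the hypothesis that $P$ is integer-infeasible is what lets us conclude $\T$ genuinely ``proves integer-infeasibility'' rather than merely having empty leaves by accident; but since emptiness of all leaf atoms is precisely the pruning-by-infeasibility condition, nothing more is needed. I would write this up as a three-line argument immediately following the statement.
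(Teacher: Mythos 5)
Your proof is correct and follows the same route as the paper: take a smallest tree $\hat\T$ with $\hat\T(Q)=\emptyset$, invoke Lemma~\ref{lem:affine} to get an equal-size tree $\T$ with $f(\T(P))\subseteq\hat\T(Q)=\emptyset$, and conclude $\T(P)=\emptyset$. The paper's version is just a terser statement of exactly this argument, so nothing further is needed.
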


\begin{proof}
Let $\hat{\T}$ be the smallest BB tree such that $\hat{\T}(Q) = \emptyset$. Then, by Lemma \ref{lem:affine}, it must hold that $\T(P) = \emptyset$. The desired result follows.
\end{proof}

\begin{cor}\label{cor:affine_depth}
Let $P$, $Q$, and $f$ satisfy the assumptions of Lemma \ref{lem:affine}. Then for every $x^* \in \R^n$ such that $x^* \not \in P_I$ and $f(x^*) \not \in Q_I$, we have 
$$\BBdepth(f(x^*), Q) \geq \BBdepth(x^*, P).$$
\end{cor}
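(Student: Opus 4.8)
The plan is to reduce this to the already-proved Lemma~\ref{lem:affine} together with the definition of $\BBdepth$. Fix $x^* \in \R^n$ with $x^* \notin P_I$ and $f(x^*) \notin Q_I$, and let $N := \BBdepth(x^*, P)$; I want to show that no BB tree of size less than $N$ can separate $f(x^*)$ from $Q$, which is exactly the claim $\BBdepth(f(x^*), Q) \ge N$.

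First I would take an arbitrary BB tree $\hat{\T}$ with $|\hat{\T}| < N$ and apply Lemma~\ref{lem:affine} to obtain a BB tree $\T$ with $|\T| = |\hat{\T}| < N$ and $f(\T(P)) \subseteq \hat{\T}(Q)$. By the definition of $N = \BBdepth(x^*, P)$ and the fact that a smallest separating tree has size exactly $N$, the tree $\T$ (being strictly smaller) fails to separate $x^*$ from $P$, i.e.\ $x^* \in \conv(\T(P))$. Now I would push this forward through $f$: since $f$ is affine, $f(\conv(\T(P))) \subseteq \conv(f(\T(P))) \subseteq \conv(\hat{\T}(Q))$, so $f(x^*) \in \conv(\hat{\T}(Q))$. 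Hence $\hat{\T}$ does not separate $f(x^*)$ from $Q$ either. Since $\hat{\T}$ was an arbitrary tree of size less than $N$, every tree separating $f(x^*)$ from $Q$ must have size at least $N$, giving $\BBdepth(f(x^*), Q) \ge N = \BBdepth(x^*, P)$.

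The only mild subtlety — and the step I would be most careful about — is the interaction between ``smallest separating tree'' and ``any tree strictly smaller fails to separate.'' This is immediate from the definition of $\BBdepth$ as the size of a \emph{smallest} separating tree: if $|\T| < \BBdepth(x^*,P)$ then $\T$ cannot be a separating tree, so by definition of ``separates'' we must have $x^* \in \conv(\T(P))$. One should also note that the hypotheses $x^* \notin P_I$ and $f(x^*) \notin Q_I$ are exactly what makes the notion of separation meaningful at both ends (so that $\BBdepth$ is defined for $x^*$ in $P$ and for $f(x^*)$ in $Q$); no further properties of $f$ beyond integrality/affineness are needed, and those were already consumed inside Lemma~\ref{lem:affine}. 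Everything else is routine, so I expect no real obstacle here — this corollary is essentially a restatement of Lemma~\ref{lem:affine} in the language of $\BBdepth$, in the same way that Corollary~\ref{cor:affine_infeas} restates it in the language of $\BBhardness$ for the infeasible case.
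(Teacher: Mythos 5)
Your proposal is correct and is essentially the paper's own argument: both rest on Lemma~\ref{lem:affine} plus the observation that an affine $f$ maps $\conv(\T(P))$ into $\conv(\hat{\T}(Q))$, with yours phrased as the contrapositive (any tree smaller than $\BBdepth(x^*,P)$ fails to separate $f(x^*)$ from $Q$) while the paper starts from a smallest tree separating $f(x^*)$ from $Q$ and shows the transported tree separates $x^*$ from $P$. No gap.
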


\begin{proof}
Let $\hat{\T}$ be a smallest BB tree that separates $f(x^*)$ from $Q$, and let $\T$ be a tree given by Lemma \ref{lem:affine}. Together with the fact that $f$ is affine, this implies that if $x \in \conv(\T(P))$ then $f(x) \in \conv(\hat{\T}(Q))$: there exists $x^1, ..., x^k \in \T(P) \text{ and } \lambda_1, ..., \lambda_k \in [0,1] \text{ {such that} } \sum_{i \in [k]}\lambda_i = 1$ {and} $x = \sum_{i \in [k]} \lambda_i x^i$. {Thus, }
	%
	\begin{align*}
&f(x) = C(\sum_{i \in [k]} \lambda_i x^i) + d = \sum_{i \in [k]} \lambda_i (Cx^i) +  \sum_{i \in [k]} \lambda_i d\\
&~~ = \sum_{i \in [k]} \lambda_i (Cx^i + d) = \sum_{i \in [k]} \lambda_i f(x^i) \in \conv(\hat{\T}(Q)),
	\end{align*}
where the last containment is by definition of $\T$. Since we know $f(x^*) \notin \conv(\hat{\T}(Q))$, this implies that $x^* \notin \conv(\T(P))$, namely $\T$ separates $x^*$ from $P$ as desired. 
\end{proof}

\begin{lemma}[Hardness lemma]\label{lem:hardness}
Let $P \subseteq \R^n$ and $T \subseteq \R^m$ be polytopes and $f : \R^n \rightarrow \R^m$ an integral affine function such that $f(P) \subseteq T$. Suppose $f$ is also one-to-one and $T \cap \Z^m \subseteq f(P \cap \Z^n)$. Then,
$$\BBrank(T) \geq \BBrank(P).$$
\end{lemma}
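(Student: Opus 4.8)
The plan is to reduce $\BBrank(T) \ge \BBrank(P)$ to the already-established \emph{simulation} result (Lemma \ref{lem:affine}) together with its depth corollary (Corollary \ref{cor:affine_depth}). The natural strategy is: take a point $x^* \in P \setminus P_I$ witnessing $\BBrank(P) = \BBdepth(x^*, P)$, push it forward to $f(x^*) \in T$, argue that $f(x^*) \notin T_I$ so that $\BBdepth(f(x^*), T)$ is well-defined, and then invoke Corollary \ref{cor:affine_depth} to conclude $\BBdepth(f(x^*), T) \ge \BBdepth(x^*, P) = \BBrank(P)$; since $\BBrank(T) \ge \BBdepth(f(x^*), T)$ by definition, this finishes the proof. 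Note the extra hypotheses in this lemma compared to Corollary \ref{cor:affine_depth} — $f$ one-to-one and $T \cap \Z^m \subseteq f(P \cap \Z^n)$ — are exactly what should be needed to control the integer hull of $T$; the affineness and integrality of $f$ are inherited from the statement.

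In detail, first I would pick $x^* \in \argmax_{x \in P \setminus P_I} \BBdepth(x, P)$, so $\BBdepth(x^*, P) = \BBrank(P)$. The key step is to show $f(x^*) \notin T_I = \conv(T \cap \Z^m)$. The plan here: since $f$ is an injective affine map, it has an affine left inverse on its affine hull, and in particular $f$ maps $\conv(S)$ onto $\conv(f(S))$ and reflects convex hulls — i.e. $f(x^*) \in \conv(f(S))$ iff $x^* \in \conv(S)$, for any finite $S \subseteq \R^n$. Now suppose for contradiction $f(x^*) \in T_I$, so $f(x^*) = \sum_i \lambda_i t^i$ with $t^i \in T \cap \Z^m$ and $\lambda_i \ge 0$, $\sum_i \lambda_i = 1$. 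By the hypothesis $T \cap \Z^m \subseteq f(P \cap \Z^n)$, each $t^i = f(p^i)$ for some $p^i \in P \cap \Z^n$. Then $f(x^*) = \sum_i \lambda_i f(p^i) = f(\sum_i \lambda_i p^i)$, and injectivity of $f$ gives $x^* = \sum_i \lambda_i p^i \in \conv(P \cap \Z^n) = P_I$, contradicting $x^* \notin P_I$. Hence $f(x^*) \notin T_I$.

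With both $x^* \notin P_I$ and $f(x^*) \notin T_I$ in hand, Corollary \ref{cor:affine_depth} (applied with $Q = T$) yields
$$\BBdepth(f(x^*), T) \ge \BBdepth(x^*, P) = \BBrank(P),$$
and since $f(x^*) \in f(P) \subseteq T$ with $f(x^*) \notin T_I$, the definition of $\BBrank$ gives $\BBrank(T) \ge \BBdepth(f(x^*), T) \ge \BBrank(P)$, as desired.

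I expect the only genuinely delicate point to be the argument that $f(x^*) \notin T_I$ — specifically, making precise that injectivity of an integral affine map lets one "pull back" a convex combination in the image to a convex combination in the domain. This is where the hypotheses $T \cap \Z^m \subseteq f(P \cap \Z^n)$ and injectivity of $f$ both get used, and one must be slightly careful that $f$ need not be surjective or invertible on all of $\R^m$ (it is only injective), so the pullback is via an affine left inverse rather than a two-sided inverse. Everything else is a direct application of Corollary \ref{cor:affine_depth} and unwinding definitions, and I would not expect the fact that $f(P) \subseteq T$ to be needed beyond guaranteeing $f(x^*) \in T$ (so that the $\BBdepth$ and $\BBrank$ of $T$ at $f(x^*)$ are meaningful).
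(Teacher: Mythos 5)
Your overall strategy is exactly the paper's: pick a maximizer $x^*$ of $\BBdepth(\cdot,P)$, show $f(x^*)\notin T_I$ via the convex-combination pullback using $T\cap\Z^m\subseteq f(P\cap\Z^n)$ and injectivity of $f$, and then transfer the depth bound through $f$. The middle step (that $f(x^*)\notin T_I$) is argued identically to the paper and is correct.

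There is, however, one imprecision in your final step. You invoke Corollary \ref{cor:affine_depth} ``with $Q=T$,'' but that corollary inherits the hypotheses of Lemma \ref{lem:affine}, where $Q$ is \emph{defined} to be the image $f(P)$ -- it is not stated for an arbitrary superset of $f(P)$. Since $T$ is only assumed to contain $f(P)$, the corollary as written does not directly give $\BBdepth(f(x^*),T)\ge\BBdepth(x^*,P)$. The paper's proof goes through the intermediate polytope $f(P)$: it first checks $f(x^*)\notin (f(P))_I$ (which follows because $f(P)\cap\Z^m\subseteq T\cap\Z^m$ implies $(f(P))_I\subseteq T_I$), applies Corollary \ref{cor:affine_depth} to get $\BBdepth(f(x^*),f(P))\ge\BBdepth(x^*,P)$, and then uses the monotonicity of depth (Corollary \ref{cor:monoton_depth}) to pass from $f(P)$ to $T\supseteq f(P)$. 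This is precisely where the hypothesis $f(P)\subseteq T$ does real work, contrary to your closing remark that it is only needed to ensure $f(x^*)\in T$. The gap is easily repaired with the lemmas already available, but as written the chain of inequalities skips a step that the corollary's statement does not license.
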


\begin{proof}
First we show that $x \not \in P_I$ implies $f(x) \not \in T_I$ by proving the contrapositive. Suppose $f(x) \in T_I$; then $\exists y^1, ..., y^k \in T \cap \Z^m \text{ and } \lambda_1, ..., \lambda_k \in [0,1] \text{ {such that} } \sum_{i \in [k]}\lambda_i = 1$ {and} $f(x) = \sum_{i \in [k]} \lambda_i y^i$. Since $T \cap \Z^m \subseteq f(P \cap \Z^n)$, for each $i$ there is $x^i \in P \cap \Z^n$ such that $y^i = f(x^i)$.
Then
$$f(x) = \sum_{i \in [k]} \lambda_i f(x^i) = \sum_{i \in [k]} \lambda_i (Cx^i + d) = C\sum_{i \in [k]} \lambda_i x^i + d = f(\sum_{i \in [k]} \lambda_i x^i),$$
and so $f(x)$ belongs to $f(P_I)$. Since $f$ is one-to-one, this implies that $x$ belongs to $P_I$, as desired. 

Now let $x^* = \argmax_{x \in P \setminus P_I} \BBdepth(x, P)$. Since $x^* \notin P_I$, by the above claim $f(x^*) \not \in T_I$. By assumption $f(P) \cap \Z^m \subseteq T \cap \Z^m$, and so $(f(P))_I \subseteq T_I$, and therefore $f(x^*) \not \in (f(P))_I$. Then by Corollary \ref{cor:affine_depth} we have $$\BBdepth(f(x^*), f(P)) \geq \BBdepth(x^*, P).$$
Since by assumption $f(P) \subseteq T$, $f(x^*) \in f(P)$, and  $f(x^*) \not \in T_I$, by Corollary \ref{cor:monoton_depth} we have $$\BBdepth(f(x^*), T) \geq \BBdepth(f(x^*), f(P)).$$ Putting it all together we get
$$\BBrank(T) = \max_{y \in T \setminus T_I} \BBdepth(y,T) \geq \BBdepth(f(x^*), T) \geq \BBdepth(f(x^*), f(P)) \geq \BBdepth(x^*, P) $$
$$= \max_{x \in P \setminus P_I} \BBdepth(x, P) = \BBrank(P),$$
which concludes the proof of the lemma. 
\end{proof}

In the rest of the paper, we will use Corollary~\ref{cor:affine_infeas}, Corollary \ref{cor:affine_depth} or Lemma~\ref{lem:hardness} together with some appropriate affine transformation to reduce the BB hardness of one problem to another. The three affine one-to-one functions we will use (and their compositions) are Flipping, Embedding, and Duplication as defined below. 

\begin{definition}[Flipping]\label{defn:flip}
We say $f : [0,1]^n \rightarrow [0,1]^n$ is a \emph{flipping operation} if it ``flips'' some coordinates, that is, there exists $J \subseteq [n]$ such that 
$$y = f(x) \implies y_i = \begin{cases}
x_i & \text{if } i \not \in J \\
1 - x_i & \text{if } i \in J
\end{cases}.$$
In other words, $f(x) = Cx + d$, where (recall $e_i$ is the $i$-th canonical basis vector)
$$C^i = \begin{cases}
e_i & \text{ if } i \not \in J \\
-e_i & \text{ if } i \in J
\end{cases}$$
$$d_i = \begin{cases}
0 & \text{ if } i \not \in J \\
1 & \text{ if } i \in J
\end{cases}.$$
\end{definition}

\begin{definition}[Embedding]\label{defn:embed}
We say $f : [0,1]^n \rightarrow [0,1]^{n + k}$ is an \emph{embedding operation} if
$$y = f(x) \implies y_i = \begin{cases}
x_i & \text{if } 1 \leq i \leq n \\
0 & \text{if } n < i \leq n + k_1 \\
1 & \text{if } n + k_1 < i \leq n + k
\end{cases},$$
for some $0 \leq k_1 \leq k$. In other words, $f(x) = Cx + d$, where
	\begin{align*}
C^i &= \begin{cases}
e_i & \text{ if } 1 \leq i \leq n \\
0 & \text{ otherwise }
\end{cases}\\
d_i &= \begin{cases}
1 & \text{ if } n+k_1 < i \leq n + k \\
0 & \text{ otherwise }
\end{cases}.
	\end{align*}
Note that we can always renumber the coordinates so that the additional coordinates with values $0$ or $1$ are interspersed with the original ones and not grouped at the end. 
\end{definition}

\begin{definition}[Duplication]\label{defn:dup}
Consider a $k$-tuple of coordinates $(j_1,...,j_k)$ that are not necessarily distinct, where $j_i \in \{1,...,n\}$ for $i = 1,...,k$. We say that $f : [0,1]^n \rightarrow [0,1]^{n + k}$ is a \emph{duplication operation} using this tuple if
$$y = f(x) \implies y_i = \begin{cases}
x_i & \text{if } 1 \leq i \leq n \\
x_{j_{i - n}} & \text{if } n < i \leq n + k
\end{cases}.$$ 
Further, let $J_j = \{i \in \{1,...,k\} : y_{n+i} = x_j\}$ be the indices of $y$ that are duplicates of $x_j$. Then, in other words, $f(x) = Cx$ where
$$C^i = \begin{cases}
e_i & \text{ if } 1 \leq i \leq n \\
e_1 & \text{ if } i - n \in J_1 \\
\vdots \\
e_n & \text{ if } i - n \in J_n
\end{cases}$$
\end{definition}

\section{BB hardness for packing polytopes and set-cover}\label{sec:combinatorial}

{In this section, we} will begin by presenting a packing polytope with BBrank of $2^{\Omega(n)}$. The proof of this result will be based on a technique developed by Dadush and Tiwari~\cite{dadush2020complexity}. Then we will employ affine maps that satisfy Lemma \ref{lem:hardness} to obtain lower bounds on BBrank for a set-cover instance. 


We present a slightly generalized version of a key result from~\cite{dadush2020complexity}. The proof is essentially the same as of the original version, but we present it for completeness.

\begin{lemma}[Generalized Dadush-Tiwari Lemma]\label{lem:dadush}
Let $P \subseteq \R^n$ be an integer-infeasible non-empty polytope. Further, suppose $P$ is defined by the set of constraints $C_P$ (i.e. $P = \{x : C_P\}$) and let $D \subseteq C_P$ be a subset of constraints such that if we remove any constraint in $D$, the polytope becomes integer feasible (i.e. for all subsets $C \subset C_P$ such that $D \setminus C \not = \emptyset$, it holds that $\{x : C\} \cap {\Z^n} \not = \emptyset$). Then, any branch-and-bound tree $\mathcal{T}$ proving the integer-infeasibility of $P$ has at least $\frac{|D|}{n}$ leaf nodes, that is $|\T|  \geq 2\frac{|D|}{n}  - 1$.
\end{lemma}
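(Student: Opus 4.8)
## Proof Proposal

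The plan is to follow the strategy of Dadush--Tiwari: show that if a BB tree $\T$ proving integer-infeasibility of $P$ has few leaves, then we can find a constraint in $D$ that is "not needed" by $\T$, contradicting the hypothesis on $D$. The key quantitative fact is that each branching disjunction $\pi^\top x \le \pi_0 \vee \pi^\top x \ge \pi_0 + 1$ can be "responsible" for excluding integer points only in a thin slab, and a single leaf's atom being empty must be certified by at most $n$ of the branching constraints together with $P$'s own constraints.

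First I would fix a BB tree $\T$ proving integer-infeasibility of $P$, so $\T(P) = \bigcup_{v \in \mathrm{leaves}(\T)} (P \cap \{x : C_v\}) = \emptyset$, meaning every leaf atom $P \cap \{x : C_v\}$ is empty. The crucial observation is that since $P$ is non-empty, emptiness of an atom must come from the branching constraints interacting with $P$; more precisely, for each leaf $v$ there is an integer point... actually the cleaner route: for each leaf $v$, the atom $P \cap \{x : C_v\}$ is empty, but $P$ itself is non-empty, so the branching constraints $C_v$ are doing the work. Consider instead the relaxations $P_j := \{x : C_P \setminus \{\gamma_j\}\}$ for each $\gamma_j \in D$; by hypothesis each $P_j$ contains an integer point, call it $z^j$. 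For $\T$ to be valid it must in particular "handle" each $z^j$: since $z^j \notin P$ (as $P$ is integer-infeasible and $z^j$ is integral) but $z^j$ violates only $\gamma_j$ among $C_P$. The idea is to track, for each such $z^j$, which leaf of $\T$ would contain $z^j$ if we only imposed the branching constraints and not $P$'s constraints — i.e. follow $z^j$ down the tree, at each node going to whichever child's branching constraint $z^j$ satisfies (an integer point always satisfies exactly one side of a legal disjunction). This assigns each $z^j$ to a unique leaf $v(j)$.

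Now the counting: I claim each leaf $v$ can receive at most $n$ of the points $z^j$ (more precisely, the $z^j$ routed to $v$ correspond to at most $n$ distinct constraints of $D$). Fix a leaf $v$ and let $S = \{ j : v(j) = v \}$. All points $\{z^j : j \in S\}$ satisfy every branching constraint in $C_v$. Since the atom $P \cap \{x : C_v\}$ is empty and $P = \{x : C_P\}$ with $C_P$ finite, emptiness of $\{x : C_P\} \cap \{x : C_v\}$... here I would invoke that in fact the atom being empty while each $z^j$ ($j\in S$) satisfies $C_v$ forces each $z^j$ to violate some constraint of $C_P$; but $z^j$ violates only $\gamma_j \in D \subseteq C_P$. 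So for each $j \in S$, the point $z^j$ lies in $\{x : C_v\}$, satisfies $C_P \setminus \{\gamma_j\}$, and violates $\gamma_j$. If two indices $j, j' \in S$ had $\gamma_j = \gamma_{j'}$, that is fine — the real bound comes from a dimension/affine-independence argument in the spirit of Lemma~\ref{lem:height}, showing that the segment or more generally the combinations of the $z^j$ within $\{x:C_v\}$ would have to meet $P$ (all of $C_P$ being satisfiable simultaneously) unless the number of distinct violated facets routed to $v$ is at most $\dim$-bounded; concretely, taking convex combinations of the $z^j$ for $j \in S$ gives points still in $\{x:C_v\}$ that satisfy all of $C_P$ except possibly the $\gamma_j$'s, and one shows at most $n$ such facets can be "simultaneously active" without the intersection being nonempty — I would make this precise by the argument that $n+1$ affinely independent such integer points, after a suitable convex combination, would yield a point of $P \cap \{x:C_v\}$. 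Hence $|\{\gamma_j : j \in S\}| \le n$, so summing over leaves, $|D| \le n \cdot |\mathrm{leaves}(\T)|$, giving $|\mathrm{leaves}(\T)| \ge |D|/n$, and since $\T$ is a full binary tree $|\T| = 2|\mathrm{leaves}(\T)| - 1 \ge 2|D|/n - 1$.

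The main obstacle I anticipate is making the "at most $n$ distinct constraints per leaf" step fully rigorous: one must carefully argue why the integer points $z^j$ routed to a common leaf $v$, each violating exactly one distinct constraint of $D$, cannot number more than $n$ (in terms of distinct violated constraints) without producing a genuine point of the atom of $v$ — and hence contradicting that $\T$ proves infeasibility. I would handle this by a Helly-type or affine-independence argument: if $n+1$ distinct constraints of $D$ were violated by points $z^j$ all lying in the affine region cut out by $C_v$ and by $C_P$ minus those constraints, then an appropriate convex combination (living in that affine span, using affine independence to control which constraints it satisfies with slack) lands in $P \cap \{x : C_v\}$, contradicting emptiness. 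Getting the combinatorics of "which constraint is slack in the combination" right is the delicate part, but it is exactly the mechanism of the original Dadush--Tiwari proof, so I expect it to go through with the stated generalization to an arbitrary non-empty $P$ and subset $D$.
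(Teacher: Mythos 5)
Your high-level counting scheme is sound and, suitably completed, does yield the lemma; but the one step that carries all the content --- that the integer witnesses routed to a single leaf $v$ can involve at most $n$ distinct constraints of $D$ --- is exactly the step you have not proved, and the mechanism you propose for it does not work. You try to derive a contradiction by showing that $n+1$ routed points $z^{j}$, each violating only its own $\gamma_j \in C_P$, admit a convex combination lying in $P \cap \{x : C_v\}$. No such combination need exist: writing $s_{ij} = b_i - \ip{a_i}{z^j}$ for the slack of $\gamma_i$ at $z^j$, a combination $\sum_j \lambda_j z^j$ lies in $P$ only if $\sum_j \lambda_j s_{ij} \ge 0$ for every $i$; if each $z^j$ violates $\gamma_j$ by a large amount while satisfying the other $\gamma_i$ with only tiny slack, these inequalities force $\sum_j \lambda_j < 1$, so no valid convex combination exists. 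Affine independence of the $z^j$ does not repair this. The per-leaf bound is nevertheless true, but its proof must use the emptiness of the atom as an \emph{input}, rather than attempt to contradict it by exhibiting a point of the atom from the witnesses alone.

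The correct mechanism --- and the one the paper uses --- is Helly's theorem applied to the constraint system of the empty atom rather than to the witness points: since $\{x : C_v \cup C_P\} = \emptyset$ is an empty intersection of finitely many convex sets in $\R^n$, there is a subfamily $K_v \subseteq C_v \cup C_P$ of $n+1$ constraints with $\{x : K_v\} = \emptyset$, and at most $n$ of these lie in $C_P$ (otherwise $P = \{x:C_P\} \subseteq \{x : K_v\} = \emptyset$, contradicting $P \neq \emptyset$). Any integer point $z^j$ routed to $v$ satisfies all of $C_v$ and all of $C_P \setminus \{\gamma_j\}$, hence must violate some constraint of $K_v \cap C_P$, which can only be $\gamma_j$; thus every distinct $\gamma_j$ assigned to $v$ lies in $K_v \cap C_P$, giving the bound of $n$ per leaf. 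With this replacement your summation over leaves goes through. For comparison, the paper's write-up avoids routing integer points altogether: it collects the at most $nN$ constraints $\bigcup_{v} (K_v \cap C_P)$ into a relaxation $\tilde{P}$ of $P$, observes that the same tree $\T$ still certifies integer-infeasibility of $\tilde{P}$, and then invokes the hypothesis on $D$ a single time, since $nN < |D|$ means some constraint of $D$ is absent from the description of $\tilde{P}$.
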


\begin{proof}
Let $\mathcal{T}$ denote any branch-and-bound proof of infeasibility for $P$ and let $N$ denote the number of leaf nodes of $\mathcal{T}$. Suppose for sake of contradiction, that $N < \frac{|D|}{n}$. Consider any leaf node $v$ of $\mathcal{T}$. Let $C_v$ be the set of branching constraints on the path to $v$. Since $v$ is a leaf and $\mathcal{T}$ is a proof of infeasibility, we note $\{x : C_v \cup C_P\} = \{x : C_v\} \cap P = \emptyset$. 

By Helly's Theorem~\cite{barvinok}, there exists a set of $n+1$ constraints $K_v \subseteq C_v \cup C_P$ such that $\{x : K_v\} = \emptyset$. Also, we see that 
\begin{align}\label{eq:missone}
|K_v \cap C_P| \leq n. 
\end{align} 
This is because if we had $|K_v \cap C_P| = n+1$, this would imply $K_v \subseteq C_P$, hence $\{x : C_P\} \subseteq \{x : K_v\}$, and since $\{x : K_v\} = \emptyset$; this would imply $\{x : C_P\} = P = \emptyset$, which is clearly a contradiction because we know $P$ is non-empty. 

Next, observe that the set $\tilde{P}:= \left\{x: \bigcup_{v \in \text{leaves}(\mathcal{T})} (K_v \cap C_P) \right\}$ is integer-infeasible, because in fact $\mathcal{T}$ certifies this: since $\tilde{P} \cap \{x: C_u\}  = \left\{x: C_u \cup \bigcup_{v \in \text{leaves}(\mathcal{T})} (K_v \cap C_P) \right\} \subseteq \{x: K_u\} = \emptyset$ for all $u \in \text{leaves}(\mathcal{T})$. 
On other hand, observe that by (\ref{eq:missone}) we have that $|\bigcup_{v \in \text{leaves}(\mathcal{T})} (K_v \cap C_P)| \leq nN < |D|$, {so one of the inequalities in $D$ is not used in the description of $\tilde{P}$ and hence $\tilde{P}$} 
contains an integer point, a contradiction. This concludes the proof. 
\end{proof}

\subsection{Packing polytopes}\label{sec:packing}
Consider the following packing polytope
$$P_{PA} = \Big\{x \in [0,1]^n : \sum_{i \in S} x_i \leq k - 1 \text{ for all } S \subseteq [n] \textrm{ such that } |S| = k\Big\},$$
where we assume $2 \leq k \leq  \frac{n}{2}$. 
\begin{lemma}\label{lem:packing}
There exists an $x^* \in P_{PA} \setminus (P_{PA})_I $ such that any branch-and-bound tree that separates $x^*$ from $P_{PA}$ has at least $\frac{2}{n} \left(\binom{n}{k} + 1 \right) - 1$ nodes. Therefore, $\BBrank(P_{PA}) \geq \frac{2}{n} \left(\binom{n}{k} + 1 \right)  -1$.
\end{lemma}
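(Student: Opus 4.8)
The plan is to apply the Generalized Dadush--Tiwari Lemma (Lemma~\ref{lem:dadush}) to the polytope $P_{PA}$ together with a suitably chosen point $x^*$, after first massaging the setup so that an appropriate ``critical'' subset $D$ of the packing constraints exists. First I would exhibit the point: the natural candidate is $x^* = \frac{k-1}{n}\,\ones$, or more conveniently a slight perturbation/scaling so that $x^*$ lies strictly inside $P_{PA}$ but still outside $(P_{PA})_I$. Since every $0/1$ point feasible for $P_{PA}$ has at most $k-1$ ones, $(P_{PA})_I$ is contained in $\{x : \ip{\ones}{x} \le k-1\}$; picking $x^*$ with $\ip{\ones}{x^*} > k-1$ but with each partial sum over any $k$-subset at most $k-1$ (e.g.\ $x^* = \frac{k-1}{k}\,\ones$, which has $\sum_{i\in S} x_i^* = \frac{k-1}{k}\cdot k = k-1$ on every $k$-set, so it is feasible, and has total $\frac{(k-1)n}{k} > k-1$ when $n > k$) shows $x^* \in P_{PA}\setminus (P_{PA})_I$. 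One should double-check the boundary issue: if $x^*$ lies on the boundary of $P_{PA}$ one can shrink it slightly towards $\frac1k\ones$ wait, that increases the sum — instead shrink the whole vector towards the origin by a factor $1-\eta$ for tiny $\eta$, keeping it feasible and strictly inside, while the total $\frac{(k-1)n}{k}(1-\eta)$ still exceeds $k-1$ for small $\eta$ when $n>k$; by assumption $k \le n/2 < n$ this is fine.

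The second and main step is to reduce ``separating $x^*$ from $P_{PA}$'' to ``proving integer-infeasibility'' of a derived polytope so that Lemma~\ref{lem:dadush} applies. The clean way: intersect $P_{PA}$ with the halfspace $\ip{\ones}{x} \ge k-1+\e$ (for small $\e>0$) to obtain an integer-infeasible polytope $P'$; a BB tree separating $x^*$ from $P_{PA}$ must, by the separating objective $c=\ones$, in effect prove infeasibility of this cap, and Lemma~\ref{lem:dadush}'s lower bound on $|\T|$ transfers. Alternatively, and perhaps more in the spirit of the paper, I would invoke Lemma~\ref{lem:inf_to_opt}-style reasoning only if needed; but the most direct route is to observe that a tree $\T$ with too few nodes has $x^* \in \conv(\T(P_{PA}))$, hence some leaf atom attains objective $\ip{\ones}{\cdot}$ at least $k-1+\e'$ and is nonempty, contradicting that $\T$ ``separates'' $x^*$. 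To get the numerical bound $\frac{2}{n}\big(\binom{n}{k}+1\big)-1$, I would apply Lemma~\ref{lem:dadush} to $P' = P_{PA} \cap \{\ip{\ones}{x} \ge k-1+\e\}$ (or to $P_{PA}$ with the point), with $D$ taken to be the $\binom{n}{k}$ packing inequalities \emph{plus} the one cap inequality $\ip{\ones}{x}\ge k-1+\e$, giving $|D| = \binom{n}{k}+1$, so $|\T| \ge \frac{2|D|}{n}-1 = \frac{2}{n}\big(\binom{n}{k}+1\big)-1$.

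The hypothesis of Lemma~\ref{lem:dadush} that must be verified — and this is where I expect the real work to lie — is that removing \emph{any single} constraint in $D$ makes the polytope integer-feasible. For a packing constraint $\sum_{i\in S}x_i \le k-1$ removed (with $|S|=k$), the point $\ones_S$ (indicator of $S$) violates only that constraint, satisfies all other $k$-subset packing constraints (any other $k$-set $S'$ misses at least one coordinate of $S$, so $\sum_{i\in S'}\ones_S \le k-1$), lies in $[0,1]^n$, and satisfies $\ip{\ones}{\ones_S} = k = k-1+1 \ge k-1+\e$ for $\e \le 1$; so it is an integer point of the reduced polytope. For the cap inequality removed, we need an integer point in $P_{PA}$ itself — e.g.\ any $0/1$ vector with exactly $k-1$ ones works, since every $k$-subset sum is then at most $k-1$. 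Thus every constraint in $D$ is ``critical.'' The only subtlety is ensuring $P'$ (with all of $D$) is genuinely integer-infeasible: any integer point has all coordinates $0/1$, the cap forces $\ge k-1+\e > k-1$ ones hence $\ge k$ ones, but then picking any $k$ of those one-coordinates violates a packing constraint — contradiction. Assembling these checks and plugging into Lemma~\ref{lem:dadush} yields the claimed node lower bound, and since this is a statement about separating a specific $x^* \in P_{PA}\setminus(P_{PA})_I$, it immediately gives $\BBrank(P_{PA}) \ge \frac{2}{n}\big(\binom{n}{k}+1\big)-1$ by definition of $\BBrank$.
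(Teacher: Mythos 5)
Your reconstruction of the infeasibility half of the argument is essentially the paper's Proposition~\ref{prop:packing_infeas}: you correctly apply Lemma~\ref{lem:dadush} to $Q = P_{PA} \cap \{x : \ip{\ones}{x} \ge k\}$ with $D$ consisting of the $\binom{n}{k}$ packing constraints plus the cap, and your criticality and integer-infeasibility checks are right. The gap is in the passage from ``every BB tree proving infeasibility of $Q$ is large'' to ``there is a single point $x^*$ that no small tree separates from $P_{PA}$.'' Your ``most direct route'' is circular: you assume that a small tree has $x^* \in \conv(\T(P_{PA}))$, which is exactly the statement to be proved. The difficulty is that separation is via an \emph{arbitrary} hyperplane, not necessarily one with normal $\ones$; from $\T(Q) \neq \emptyset$ you only get, for each small tree $\T$, \emph{some} witness $x^*(\T) \in \T(P_{PA})$ with $\ip{\ones}{x^*(\T)} \ge k$, and these witnesses vary with $\T$. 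Producing one point lying in $\conv(\T(P_{PA}))$ for \emph{all} small trees simultaneously is exactly what Lemma~\ref{lem:inf_to_opt} does, via Lemma~\ref{lem:height}: the point is obtained as a common point of the simplices $\conv(s^1,\dots,s^d,x^*(\T))$, where the $s^i$ are integral vertices on the face $\{\ip{\ones}{x} = k-1\}$, and it is \emph{not} explicit. Your candidate $x^* = \frac{k-1}{k}\ones$ is unjustified; nothing in your argument shows a small tree cannot separate that particular point (e.g.\ a convex combination of the $\chi(T)$'s and a single witness of $\ones$-sum $k$ can never reach $\ones$-sum $\frac{n(k-1)}{k}$, so the naive containment you would need fails).

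Relatedly, invoking Lemma~\ref{lem:inf_to_opt} is not optional here, and it carries a hypothesis you never address: $\ip{\ones}{x} \le k-1$ must be \emph{facet-defining} for $(P_{PA})_I$ (so that one can find $\dim(P_{PA})$ affinely independent integral points on the corresponding hyperplane to feed into Lemma~\ref{lem:height}). The paper devotes a full paragraph to verifying this, by showing that the characteristic vectors of all $(k-1)$-subsets affinely span the hyperplane $\{\ip{\ones}{x} = k-1\}$. Without this step (and without the simplex-intersection argument) the claimed existence of $x^*$ does not follow from the infeasibility lower bound alone.
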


The starting point for proving this lemma is the following following proposition. 

\begin{proposition}\label{prop:packing_infeas}
{The polytope $Q = P_{PA} \cap \{x : \ip{\ones}{x} \geq k\}$ is integer-infeasible}, and any branch-and-bound tree proving its {integer-}infeasibility has at least $\frac{2}{n} (\binom{n}{k} + 1)  - 1$ nodes. 
\end{proposition}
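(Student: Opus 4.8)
## Proof Proposal for Proposition~\ref{prop:packing_infeas}

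The plan is to apply the Generalized Dadush-Tiwari Lemma (Lemma~\ref{lem:dadush}) to the polytope $Q = P_{PA} \cap \{x : \ip{\ones}{x} \geq k\}$, with the distinguished set $D$ of constraints being precisely the packing constraints $\sum_{i \in S} x_i \le k-1$ over all $k$-subsets $S \subseteq [n]$. Since there are $\binom{n}{k}$ such constraints, once the hypotheses of Lemma~\ref{lem:dadush} are verified we immediately get that any branch-and-bound tree proving integer-infeasibility of $Q$ has at least $\frac{\binom{n}{k}}{n}$ leaves, hence at least $2\frac{\binom{n}{k}}{n} - 1$ nodes. (To get the slightly stronger bound with $\binom{n}{k}+1$ in the statement, I would either include the constraint $\ip{\ones}{x} \geq k$ — equivalently $-\ip{\ones}{x} \le -k$ — in $D$ as well, or more carefully note that $D$ can be taken to have size $\binom{n}{k}+1$; I need to check which grouping makes the "remove-one-constraint" hypothesis go through cleanly, and I expect including the cardinality constraint in $D$ is the right move.)

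The first step is to verify that $Q$ is integer-infeasible and non-empty. Non-emptiness: the all-$\tfrac12$ vector, or better the point $x^*$ with all coordinates equal to $\frac{k-1}{k-1} \cdot \frac{1}{?}$ — concretely, take $x_i = \frac{k - 1/2}{n}$ scaled appropriately, or simply observe that $x = \frac{k-1}{n}\cdot\frac{n}{k-?}$... I would instead just exhibit $x$ with all coordinates equal to $t$ where $t$ is chosen so that $kt \le k-1$ (i.e. $t \le \frac{k-1}{k}$) and $nt \ge k$ (i.e. $t \ge \frac{k}{n}$); such $t$ exists precisely because $k \le n/2$ guarantees $\frac{k}{n} \le \frac12 \le \frac{k-1}{k}$ for $k \ge 2$. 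Integer-infeasibility: any integer point in $[0,1]^n$ is a $0/1$ vector; the cardinality constraint forces at least $k$ ones, so some $k$-subset $S$ of coordinates is all ones, violating $\sum_{i\in S} x_i \le k-1$.

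The second and main step is verifying the key hypothesis of Lemma~\ref{lem:dadush}: removing any one constraint from $D$ makes the system integer-feasible. If we drop the packing constraint corresponding to a particular $k$-set $S_0$, then the $0/1$ point $x = \ones_{S_0}$ (indicator of $S_0$) satisfies the cardinality constraint ($|S_0| = k \ge k$) and satisfies every remaining packing constraint: for $S \ne S_0$ with $|S| = k$, we have $|S \cap S_0| \le k - 1$ since $S \ne S_0$, so $\sum_{i \in S} (\ones_{S_0})_i = |S \cap S_0| \le k-1$. If instead we drop the cardinality constraint $\ip{\ones}{x} \ge k$, then $x = \vec{0}$ works. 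So in both cases the reduced system has an integer solution. This is the crux, and it is where the combinatorial structure (distinct $k$-sets intersect in at most $k-1$ elements) is doing the work; I expect it to be the part requiring the most care, mainly in bookkeeping the case where the removed constraint is the cardinality one versus a packing one.

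Finally, to conclude I would note that this proposition talks about proving infeasibility of $Q$, which requires a BB tree $\T$ with $\T(Q) = \emptyset$; Lemma~\ref{lem:dadush} delivers exactly the claimed node count $\frac{2}{n}(\binom{n}{k}+1) - 1$ provided $|D| = \binom{n}{k}+1$ as arranged above. One subtlety to double-check: Lemma~\ref{lem:dadush} requires $D \subseteq C_P$ where $C_P$ is the constraint set defining the polytope — I must make sure the chosen description of $Q$ literally lists the $\binom{n}{k}$ packing inequalities plus $-\ip{\ones}{x}\le -k$ (plus the box constraints, which are not in $D$), so that $D$ is a genuine subset of the description. No issue there, since $Q$ is defined by exactly those constraints together with $0 \le x_i \le 1$.
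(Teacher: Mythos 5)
Your proposal is correct and follows essentially the same route as the paper: it verifies non-emptiness via a constant vector, integer-infeasibility via the pigeonhole on $k$-subsets, and the ``remove-one-constraint'' hypothesis of Lemma~\ref{lem:dadush} with $D$ consisting of the $\binom{n}{k}$ packing constraints together with the cardinality constraint, using $\chi(S_0)$ and the origin as the respective witnesses. The paper uses the specific point $\hat{x} = \frac{k}{n}\ones$ for non-emptiness, which is one instance of your parametrized choice of $t$; otherwise the arguments coincide.
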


\begin{proof}
We show $Q \cap \{0,1\}^n = \emptyset$. Suppose for sake of contradiction there is some $x^* \in Q \cap \{0,1\}^n$. Since $\sum_{i \in [n]} x^*_i \geq k$, there is a set $S \subseteq [n]$ of size $k$ such that $\sum_{i \in S} x^*_i = k$. This violates the cardinality constraint corresponding to $S$, so $x^* \not \in Q$, a contradiction. 

{For the lower bound on BB trees that prove the integer-infeasibility of $Q$,} we show that $Q$ satisfies all of the requirements of Lemma \ref{lem:dadush}. First we show that $Q \neq \emptyset$. Consider the point $\hat{x} \in \mathbb{R}^n$ where $\hat{x}_i = \frac{k}{n}$ for $i \in [n]$.  Then, for any $S \subseteq [n]$ with $|S| = k$, we have $\sum_{i \in S}\hat{x}_i = k\cdot \frac{k}{n} \leq k \cdot \frac{1}{2} \leq k - 1$, where the last two inequalities are implied by the assumption $2 \leq k \leq  \frac{n}{2}$. Also, $\sum_{i \in [n]} \hat{x}_i = k$. Thus, $\hat{x}$ satisfies all the constraints of $Q$.

Next, we show that there is a set of $\binom{n}{k} + 1$ constraints $D$ such that removing any of these constraints makes $Q$ integer feasible.  Suppose we remove the constraint $\sum_{i \in S} x_i \leq k - 1$, denote this new polytope $Q'$. Then let $x^*_i = 1$ for all $i \in S$ and $x^*_i = 0$ for all $i \not \in S$. Clearly $\sum_{i \in [n]} x^*_i \geq k$ and since for all $S' \subseteq [n], |S'| = k$ it holds that $|S' \cap S| \leq k - 1$, it is also the case that $\sum_{i \in S'} x^*_i \leq k-1$. So $x^* \in Q' \cap \{0,1\}^n$. Now suppose we remove instead the constraint $\sum_{i \in [n]} x_i \geq k$, resulting in polytope $P_{PA}$. Clearly $P_{PA}$ is down monotone, and therefore $0 \in P_{PA}$. 

Therefore, by Lemma \ref{lem:dadush}, any branch-and-bound proof of infeasibility for $Q$ has at least $\frac{2}{n} (\binom{n}{k} + 1) - 1$ nodes.
\end{proof}

Now, combining Proposition \ref{prop:packing_infeas} with Lemma \ref{lem:inf_to_opt}, we are ready to prove Lemma \ref{lem:packing}. 

\begin{proof}[Proof of Lemma \ref{lem:packing}]
We will show that $P_{PA}$ and $\ip{\ones}{x} \le k-1$ satisfy the conditions on $P$ and $\ip{c}{x} \le \delta$ set by Lemma \ref{lem:inf_to_opt}. First, $\ip{\ones}{x} \le k-1$ is a valid inequality for $(P_{PA})_I$: this follows from the integer-infeasibility of $Q = P_{PA} \cap \{x : \ip{\ones}{x} \geq k\}$, as proven in Proposition \ref{prop:packing_infeas}. {Also, clearly $\ip{\ones}{x} \le k-1$ is not valid for $P_{PA}$, since it cuts off the point $\hat{x} = \frac{k}{n} \ones \in P_{PA}$.}
In the following paragraph we will show that $\{x \in (P_{PA})_I : \ip{\ones}{x} = k-1 \}$ has dimension $n-1$, that is, $\ip{\ones}{x} \le k-1$ is facet-defining for $(P_{PA})_I$. With this at hand we can apply Lemma \ref{lem:inf_to_opt} to obtain 
$$\BBrank(P_{PA}) \geq \BBhardness(P_{PA} \cap \{x : \ip{\ones}{x} \geq k\}) = \frac{2}{n} \left(\binom{n}{k} + 1\right) - 1$$
where the last inequality follows from Proposition \ref{prop:packing_infeas}, which will then prove the lemma.

To show that $\ip{\ones}{x} \le k-1$ is facet-defining, let $T \subseteq [n]$ be such that $|T| = k -1$. Let $\chi(T)$ denote the characteristic vector of $T$, so that $\chi(T)_i = 1$ if and only if $i \in T$. We know that all these points belong to the hyperplane $\{x: \ip{\ones}{x} = k - 1\}$. Thus, there can be at most $n$ affinely independent points among $\{\chi(T)\}_{T \subseteq [n], |T| = k -1}$. We first verify that there are exactly $n$ affinely independent points among $\{\chi(T)\}_{T \subseteq [n], |T| = k - 1}$ by showing that the affine hull of the points in $\{\chi(T)\}_{T \subseteq [n], |T| = k -1}$ is the hyperplane $\{x: \ip{\ones}{x} = k - 1\}$. Consider the system in variables $a,b$:
\begin{eqnarray*}
\ip{a}{\chi(T)} = b,~~~\forall T \subseteq [n] \textrm{ such that } |T| = k- 1. 
\end{eqnarray*} 
We have to show that all {non-zero} solutions of the above system are a scaling of $(\ones, k -1)$. For that, let $T^1 = \{1, \dots, k - 1\}$ and $T^2:= \{2, \dots, k \}$. Subtracting the equation corresponding to $T^1$ from that of $T^2$, we obtain $a_1 = a_{k}$. Using the same argument by suitably selecting $T^1$ and $T^2$, we obtain: $a_1 = a_2 = \dots = a_n$. Therefore, without loss of generality {(excluding the solution where $a$ is identically 0, since that would lead to $b = 0$)}, we may rescale all the $a_i$'s to $1$. Then we see the only possible value for $b$ is $k -1$. This shows that the only affine subspace containing the points $\{\chi(T)\}_{T \subseteq [n], |T| = k - 1}$ is $\{x: \ip{\ones}{x} = k - 1\}$, in other words, there are $n$ affinely independent points among them. This concludes the proof. 
\end{proof}

Finally, {since BB hardness is always at least the BB rank (Lemma \ref{lem:rankhardness}),} Lemma \ref{lem:packing} gives the desired hardness bound. 

\begin{cor}
Consider the polytope \mbox{$P_{PA} = \{x \in [0,1]^n : \sum_{i \in S} x_i \leq \frac{n}{2}, \text{ for all } S \subseteq [n] \textrm{ such that } |S| = \frac{n}{2} + 1\}$.} Then, ${\BBhardness}(P_{PA}) \geq 2^{\Omega(n)}$, i.e. there exists a {vector} $c\in \R^n$ such that  the smallest branch-and-bound tree that solves $$\max_{x \in P_{PA} \cap \{0,1\}^{n}} \ip{c}{x}$$ has size at least $2^{\Omega(n)}$.
\end{cor}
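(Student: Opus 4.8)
The plan is to directly instantiate Lemma~\ref{lem:packing} with the choice $k = \frac{n}{2} + 1$. With this choice the cardinality constraints become $\sum_{i \in S} x_i \le \frac{n}{2}$ over all $S \subseteq [n]$ with $|S| = \frac{n}{2}+1$, which is exactly the polytope $P_{PA}$ named in the corollary. First I would check that $k = \frac{n}{2}+1$ satisfies the hypothesis $2 \le k \le \frac{n}{2}$ needed by Lemma~\ref{lem:packing} — actually this is the one point that needs a word of care, since $\frac{n}{2}+1 > \frac{n}{2}$; but the constraint set $\{S : |S| = \frac{n}{2}+1\}$ is the same as the constraint set one gets from the ``complementary'' threshold, or more simply one re-derives Lemma~\ref{lem:packing} for this regime (the only place $k \le n/2$ was used was to verify $\hat x = \frac{k}{n}\ones$ is feasible, and for $k = \frac n2 + 1$ the analogous interior point $\hat x = \frac{k-1}{n}\ones = \frac{n/2}{n}\ones = \frac12\ones$ works: each constraint gives $(\frac n2 + 1)\cdot \frac12 = \frac n4 + \frac12 \le \frac n2$ for $n \ge 2$, and $\ip{\ones}{\hat x} = \frac n2 < k$, so one instead perturbs slightly or uses $\hat x$ with a tiny push to satisfy $\ip{\ones}{x} \ge k$ strictly — alternatively note $Q$ is still nonempty by a direct argument). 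Modulo this bookkeeping, Lemma~\ref{lem:packing} gives $\BBrank(P_{PA}) \ge \frac{2}{n}\left(\binom{n}{n/2+1} + 1\right) - 1$.

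Next I would apply Lemma~\ref{lem:rankhardness}, which states $\BBhardness(P) \ge \BBrank(P)$ for any polytope $P$, and in particular produces a vector $c \in \R^n$ such that every BB tree solving $\max_{x \in P_{PA} \cap \Z^n} \ip{c}{x}$ has size at least $\BBrank(P_{PA})$. (Since $P_{PA} \subseteq [0,1]^n$, optimizing over $P_{PA} \cap \Z^n$ is the same as over $P_{PA} \cap \{0,1\}^n$.) Combining the two displays,
$$
\BBhardness(P_{PA}) \;\ge\; \BBrank(P_{PA}) \;\ge\; \frac{2}{n}\left(\binom{n}{n/2+1} + 1\right) - 1.
$$

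Finally I would estimate $\binom{n}{n/2+1}$. By the standard central binomial coefficient bound, $\binom{n}{n/2+1} = \Theta\!\left(\binom{n}{n/2}\right) = \Theta\!\left(\frac{2^n}{\sqrt n}\right)$, so $\frac{2}{n}\binom{n}{n/2+1} = 2^{n - O(\log n)} = 2^{\Omega(n)}$, which swamps the $-1$ and the $+1$. This yields $\BBhardness(P_{PA}) \ge 2^{\Omega(n)}$ and, unpacking the definition of BB hardness, exhibits the desired $c$. I expect the only genuine obstacle to be the edge-case of the index range $2 \le k \le n/2$ versus the stated $k = n/2+1$; everything else is a direct chaining of Lemma~\ref{lem:packing}, Lemma~\ref{lem:rankhardness}, and an asymptotic estimate. (One should also implicitly assume $n$ is even so that $n/2+1$ is an integer, or replace $n/2$ by $\lfloor n/2 \rfloor$ throughout.)
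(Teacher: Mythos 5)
Your proposal is correct and follows the paper's own route exactly: instantiate Lemma~\ref{lem:packing} (with $k=\frac{n}{2}+1$), pass to hardness via Lemma~\ref{lem:rankhardness}, and finish with the central-binomial-coefficient estimate. Your concern about the hypothesis $2 \le k \le \frac{n}{2}$ is legitimate --- the paper applies the lemma silently outside its stated range --- but the cleanest patch is to observe that the lemma's original witness $\hat x = \frac{k}{n}\ones$ still lies in $Q$ when $k=\frac{n}{2}+1$ and $n \ge 6$, since $\sum_{i \in S}\hat x_i = \frac{k^2}{n} = \frac{n}{4}+1+\frac{1}{n} \le \frac{n}{2} = k-1$ and $\ip{\ones}{\hat x}=k$; your substitute point $\frac{1}{2}\ones$ fails $\ip{\ones}{x}\ge k$ as you yourself note, and the ``tiny push'' you allude to is precisely the point $\frac{k}{n}\ones$.
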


\subsection{Set-cover}

In order to obtain a set-cover instance that requires an exponential-size branch-and-bound tree, we will use Lemma~\ref{lem:hardness} together with  the flipping affine mapping (Defintion~\ref{defn:flip}) applied to the packing instance from Section~\ref{sec:packing}.  

More precisely, consider the following set-cover polytope: $$T_{\text{SC}} = \Big\{y \in [0,1]^n : \sum_{i \in S} y_i \geq 1 \text{ for all } S \subseteq [n] \textrm{ such that } |S| = k\Big\}.$$

\begin{proposition}
Let $P_{PA}$ still be the packing polytope from Section \ref{sec:packing}. Let $f : [0,1]^n \rightarrow [0,1]^n$ be the flipping function with $J = [n]$. Then:
\begin{itemize} \vspace{-10pt}
\item $T_{\text{SC}} = f(P_{PA})$
\item $T_{\text{SC}} \cap \{0,1\}^n \subseteq f(P_{PA} \cap \{0,1\}^n)$.
\end{itemize}
\end{proposition}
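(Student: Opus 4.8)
The plan is to verify the two claimed set equalities/containments directly from the definitions of the flipping map and the two polytopes. With $J = [n]$, the flipping function $f$ sends $x \mapsto \ones - x$, which is its own inverse. So I would start from the definition $P_{PA} = \{x \in [0,1]^n : \sum_{i \in S} x_i \le k-1 \text{ for all } |S| = k\}$ and compute $f(P_{PA})$. A point $y$ lies in $f(P_{PA})$ iff $\ones - y \in P_{PA}$ (using $f^{-1} = f$), i.e. iff $y \in [0,1]^n$ and for every $S$ with $|S| = k$ we have $\sum_{i \in S}(1 - y_i) \le k-1$. Since $\sum_{i \in S}(1-y_i) = k - \sum_{i \in S} y_i$, this rearranges to $\sum_{i \in S} y_i \ge 1$. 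Hence $y \in f(P_{PA})$ iff $y \in [0,1]^n$ and $\sum_{i \in S} y_i \ge 1$ for all $|S|=k$, which is exactly the defining condition of $T_{\text{SC}}$. This gives the first bullet, $T_{\text{SC}} = f(P_{PA})$.

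For the second bullet I want $T_{\text{SC}} \cap \{0,1\}^n \subseteq f(P_{PA} \cap \{0,1\}^n)$. Take $y \in T_{\text{SC}} \cap \{0,1\}^n$. By the first bullet, $y \in f(P_{PA})$, so $x := f^{-1}(y) = \ones - y \in P_{PA}$; and since $y \in \{0,1\}^n$, clearly $x = \ones - y \in \{0,1\}^n$ as well. Therefore $x \in P_{PA} \cap \{0,1\}^n$ and $y = f(x) \in f(P_{PA} \cap \{0,1\}^n)$. (In fact one gets equality here, but containment is all that is needed to invoke Lemma~\ref{lem:hardness}.)

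There is no real obstacle in this proposition — it is a routine unwinding of definitions, the only mildly delicate point being to keep track of the fact that $f$ is an involution so that membership in $f(P_{PA})$ can be tested by applying $f$ again. The slightly more substantive remark to make afterwards (presumably in a corollary, not in this proposition) is that $f$ is indeed an \emph{integral} affine one-to-one function in the sense of Lemma~\ref{lem:hardness}, so that combined with the two bullets and $f(P_{PA}) = T_{\text{SC}} \subseteq T_{\text{SC}}$ one concludes $\BBrank(T_{\text{SC}}) \ge \BBrank(P_{PA}) \ge \frac{2}{n}\big(\binom{n}{k}+1\big) - 1$, and hence (via Lemma~\ref{lem:rankhardness}) the exponential BB hardness lower bound for the set-cover polytope when $k = n/2 + 1$.
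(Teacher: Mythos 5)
Your proof is correct and follows essentially the same route as the paper: the substitution $y = \ones - x$ turns each packing constraint $\sum_{i\in S} x_i \le k-1$ into the covering constraint $\sum_{i\in S} y_i \ge 1$, and the second bullet follows by noting that $f$ is an involution preserving $\{0,1\}^n$. No gaps.
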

\begin{proof}
By substituting $y_i = 1 - x_i$ for  $i \in [n]$ in the polytope $P_{PA}$, we obtain that $T_{\text{SC}} = f(P_{PA})$.  

For the second item, consider any $
\hat{y} \in T_{\text{SC}} \cap \{0,1\}^n$. Notice $\hat{x} := 1 - \hat{y}$ belongs to $P_{PA} \cap \{0,1\}^n$ and $\hat{y} = f(\hat{x})$, and hence $\hat{y} \in f(P_{PA} \cap \{0,1\}^n)$. This gives $T_{\text{SC}} \cap \{0,1\}^n \subseteq f(P_{PA} \cap \{0,1\}^n)$.
\end{proof}



Then by Lemmas \ref{lem:hardness} {and \ref{lem:packing}} we have that $\BBrank(T_{\text{SC}}) \geq \BBrank(P_{PA}) \geq 2^{\Omega(n)}$. {Further employing Lemma~\ref{lem:rankhardness} we obtain the desired hardness bound.}

\begin{cor}
${\BBhardness}(T_{\text{SC}}) \geq 2^{\Omega(n)}$, i.e. there exists a {vector} $c\in \R^n$ such that  the smallest branch-and-bound tree that solves $$\max_{x \in T_{SC} \cap \{0,1\}^{n}} \ip{c}{x}$$ has size at least $2^{\Omega(n)}$.
\end{cor}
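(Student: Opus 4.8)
The plan is to invoke the machinery already assembled in the excerpt, essentially as a one-line deduction, so the ``proof'' is really a matter of stitching together the preceding results in the right order. First I would recall that the preceding Proposition establishes the two facts we need about the flipping map $f:[0,1]^n\to[0,1]^n$ with $J=[n]$: namely $T_{\text{SC}} = f(P_{PA})$ and $T_{\text{SC}} \cap \{0,1\}^n \subseteq f(P_{PA}\cap\{0,1\}^n)$. Since $f$ is an integral affine function (its matrix $C$ has $C^i = -e_i$ and its shift $d = \ones$, both integral) and is clearly one-to-one (it is an involution, $f\circ f = \mathrm{id}$), the triple $(P_{PA}, T_{\text{SC}}, f)$ satisfies every hypothesis of Lemma~\ref{lem:hardness}: $f(P_{PA}) \subseteq T_{\text{SC}}$ (in fact with equality), $f$ one-to-one, and $T_{\text{SC}}\cap\Z^n \subseteq f(P_{PA}\cap\Z^n)$ (noting $T_{\text{SC}}, P_{PA}\subseteq[0,1]^n$ so integer points are $0/1$ points). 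Hence Lemma~\ref{lem:hardness} gives $\BBrank(T_{\text{SC}}) \ge \BBrank(P_{PA})$.

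Next I would chain this with Lemma~\ref{lem:packing} applied with the specific choice $k = \frac{n}{2}+1$ (assuming $n$ even so that $k$ is an integer, and noting this satisfies $2 \le k \le \frac n2$ is \emph{not} quite right --- one must use $k=\frac n2$ or adjust --- but in any case with $k = \Theta(n)$ bounded away from both endpoints), which yields $\BBrank(P_{PA}) \ge \frac{2}{n}\left(\binom{n}{k}+1\right) - 1$. Since $\binom{n}{k}$ with $k$ a constant fraction of $n$ is $2^{\Omega(n)}$, dividing by the polynomial factor $n$ still leaves $2^{\Omega(n)}$. Combining the two displayed inequalities gives $\BBrank(T_{\text{SC}}) \ge 2^{\Omega(n)}$.

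Finally I would apply Lemma~\ref{lem:rankhardness}, which states $\BBhardness(P) \ge \BBrank(P)$ for any polytope $P$, to conclude $\BBhardness(T_{\text{SC}}) \ge \BBrank(T_{\text{SC}}) \ge 2^{\Omega(n)}$. Unwinding the definition of $\BBhardness$ as $\max_{c}\min\{|\T| : \T \text{ solves } \max_{x\in T_{\text{SC}}\cap\Z^n}\ip{c}{x}\}$, this precisely says there exists a vector $c\in\R^n$ such that the smallest branch-and-bound tree solving $\max_{x\in T_{SC}\cap\{0,1\}^n}\ip{c}{x}$ has size at least $2^{\Omega(n)}$, which is the claimed statement.

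There is no genuine obstacle here --- the real work was done in Lemma~\ref{lem:hardness}, Lemma~\ref{lem:dadush}, Lemma~\ref{lem:packing}, and Lemma~\ref{lem:rankhardness}. The only things to be careful about are bookkeeping: (i) checking the flipping map literally meets the ``integral affine, one-to-one'' hypotheses (immediate); (ii) making sure the parameter $k$ is chosen so that both $\binom nk = 2^{\Omega(n)}$ and the constraint $2\le k\le \frac n2$ from Lemma~\ref{lem:packing} hold, e.g. $k=\lfloor n/2\rfloor$; and (iii) observing that integer points of polytopes inside $[0,1]^n$ are exactly $0/1$ points, so the set-cover hypothesis $T_{\text{SC}}\cap\Z^n\subseteq f(P_{PA}\cap\Z^n)$ really is the statement proved in the Proposition. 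None of these requires more than a sentence.
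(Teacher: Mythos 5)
Your proposal is correct and follows the paper's own route exactly: the flipping map plus Lemma~\ref{lem:hardness} gives $\BBrank(T_{\text{SC}}) \geq \BBrank(P_{PA})$, Lemma~\ref{lem:packing} with $k = \Theta(n)$ gives the $2^{\Omega(n)}$ bound, and Lemma~\ref{lem:rankhardness} converts BB rank into BB hardness. Your side remark about the parameter $k$ needing to satisfy $2 \le k \le n/2$ while keeping $\binom{n}{k} = 2^{\Omega(n)}$ is a fair bookkeeping point (e.g., $k = \lfloor n/2 \rfloor$ works) but does not change the argument.
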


\section{BB hardness for cross-polytope}\label{sec:cross_poly_hard}

In this section, we present in Proposition~\ref{lem:lowerbndsimple} a simple proof of BB hardness for the cross-polytope. As mentioned before, this result slightly improves on the result that can be directly obtained by applying Lemma~\ref{lem:dadush} to the cross-polytope. 

Next in this section we develop Proposition \ref{prop:crosspoly_lb} that shows that there is a point in the cross polytope that is hard to separate using BB trees of small size. This allows us to use the machinery of Lemma \ref{lem:hardness} and a composition of the affine functions described in Section \ref{sec:tech_lemmas} to connect the BB hardness of TSP to that of the cross-polytope, which we do in Section \ref{sec:tsp}. 

The cross-polytope is defined as
$$P_n = \left\{x \in [0,1]^n : \sum_{i \in J}x_i + \sum_{i \not \in J}(1 - x_i) \geq \frac{1}{2} \quad \forall J \subseteq [n] \right\}.$$
{Recall that the cross-polytope is integer-infeasible: every 0/1 point $\hat{x} \in \{0,1\}^n$ is cut off by the inequality given by the set $J = \{ i \in [n] : \hat{x}_i = 0\}$.}

\begin{proposition}\label{lem:lowerbndsimple}
Let $\mathcal{T}$ be a BB tree for $P_n$ that certifies the integer-infeasibility of $P_n$. Then $|\mathcal{T}| \geq 2^{n +1} - 1$ (i.e. $\BBhardness(P_n) \geq 2^{n+1} - 1$).
\end{proposition}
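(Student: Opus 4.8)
# Proof proposal for Proposition \ref{lem:lowerbndsimple}

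The plan is to show that any BB tree $\mathcal{T}$ certifying integer-infeasibility of $P_n$ must have at least $2^n$ leaves, which (since a full binary tree with $N$ leaves has $2N-1$ nodes) gives $|\mathcal{T}| \geq 2^{n+1}-1$. The key geometric fact I would exploit is that $P_n$ contains the point $\tfrac{1}{2}\ones$ in its deep interior, and more importantly that for \emph{every} vertex $\hat{x}\in\{0,1\}^n$ of the cube, the open segment from $\hat{x}$ towards $\tfrac12\ones$ stays inside $P_n$ right up until it is cut off by the single inequality associated with $J=\{i:\hat{x}_i=0\}$. Concretely, the point $x^{\hat{x}} := \hat{x} + t(\tfrac12\ones - \hat{x})$ lies in $P_n$ for all sufficiently small $t>0$: one checks that $\hat{x}$ satisfies every cross-polytope inequality except the one indexed by $J$ with equality-violation, and satisfies that one with slack approaching $0$ as $t\to 0$; hence a suitably small perturbation towards the center satisfies \emph{all} of them strictly. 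Actually the cleanest version: each $\hat{x}$ is cut off only by the inequality for $J = \{i:\hat x_i = 0\}$ (every other inequality has the value $\geq 1 > \tfrac12$ at $\hat{x}$), so a point near $\hat{x}$ on the way to the center still satisfies the other $2^n-1$ inequalities strictly and satisfies the remaining one since moving toward the center strictly increases its left-hand side.

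The core of the argument is then a counting/assignment step. For each of the $2^n$ cube vertices $\hat x$, fix such an interior point $x^{\hat x}\in P_n$ lying very close to $\hat x$. Since $\mathcal{T}$ certifies infeasibility, $\mathcal{T}(P_n)=\bigcup_{v\in\mathrm{leaves}}(P_n\cap\{x:C_v\})=\emptyset$, so in particular each $x^{\hat x}$ must be excluded from \emph{every} leaf atom. I would argue that this forces distinct cube vertices to be ``blamed'' on distinct leaves, in the following sense. Along the root-to-leaf paths, consider how a single legal disjunction $\pi^\top x \le \pi_0 \vee \pi^\top x \ge \pi_0+1$ behaves on the $2^n$ cube vertices: each $\hat x$ satisfies exactly one side of the disjunction strictly unless $\pi^\top\hat x \in\{\pi_0,\pi_0+1\}$ on the boundary — but for points $x^{\hat x}$ chosen close enough to $\hat x$ (we may choose the closeness depending on the finitely many disjunctions in $\mathcal{T}$), each $x^{\hat x}$ is assigned to exactly one child at every branching node, hence to exactly one leaf. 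So the map $\hat x \mapsto (\text{leaf containing } x^{\hat x}\text{'s branch choices})$ is well-defined; call the leaf $\ell(\hat x)$. At that leaf the atom is empty, and $x^{\hat x}\notin P_n\cap\{x:C_{\ell(\hat x)}\}$; but $x^{\hat x}\in P_n$ and $x^{\hat x}$ satisfies every branching constraint on the path to $\ell(\hat x)$ (by construction of $\ell$), which is a contradiction — \emph{unless} we are more careful. The resolution: instead I would show each leaf can ``capture'' at most one cube vertex.

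Here is the cleaner route I would actually take, mirroring the style of Lemma~\ref{lem:dadush} but specialized. Suppose $\mathcal{T}$ has fewer than $2^n$ leaves. For each cube vertex $\hat x$, the point $\tfrac12\ones$ and $\hat x$ both must fail to lie in some leaf atom — but $\tfrac12\ones\in P_n$ is cut off by no cross-polytope inequality, so it is separated \emph{only} by branching constraints. Consider the leaf $v$ whose atom, together with the cube structure, ``should'' contain the local neighborhood of $\hat x$: trace from the root taking at each disjunction $\pi^\top x\le\pi_0\vee\pi^\top x\ge\pi_0+1$ the branch that $\hat x$ satisfies (if $\hat x$ is on the hyperplane boundary, i.e. $\pi^\top \hat x = \pi_0$ or $\pi_0+1$, then take $\le$ or $\ge$ respectively — $\hat x$ still satisfies that chosen constraint). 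This yields a leaf $v(\hat x)$ with $\hat x\in\{x:C_{v(\hat x)}\}$. Now I claim $\hat x\mapsto v(\hat x)$ is injective: if $v(\hat x)=v(\hat x')=v$ for $\hat x\ne\hat x'$, then $\{x:C_v\}$ is a polytope containing both $\hat x$ and $\hat x'$, hence contains their midpoint and indeed a whole neighborhood of the segment's relative interior restricted appropriately; pushing the segment-midpoint slightly toward $\tfrac12\ones$ we obtain a point $y$ that (i) satisfies all of $C_v$ strictly enough to remain in $\{x:C_v\}$ and (ii) lies in the interior of $P_n$ — because $y$ is a convex combination of $\hat x$, $\hat x'$ and $\tfrac12\ones$ with positive weight on the center, and such combinations satisfy every cross-polytope inequality strictly (each inequality is violated \emph{only} at a unique cube vertex, so any convex combination touching $\ge 2$ vertices or the center is strictly feasible). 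Then $y\in P_n\cap\{x:C_v\}=\emptyset$, contradiction. Hence the map is injective, so $\mathcal{T}$ has at least $2^n$ leaves, giving $|\mathcal{T}|\ge 2^{n+1}-1$.

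I expect the main obstacle to be making the perturbation/interiority argument fully rigorous: precisely verifying that a convex combination of two or more distinct cube vertices (optionally with the center) satisfies \emph{every} cross-polytope inequality strictly. The crux lemma to isolate and prove carefully is: \emph{for each $J\subseteq[n]$, the inequality $\sum_{i\in J}x_i+\sum_{i\notin J}(1-x_i)\ge\tfrac12$ fails on $\{0,1\}^n$ at exactly one point, namely the indicator vector of $[n]\setminus J$, where its left-hand side equals $0$; at every other cube vertex the left-hand side is a positive integer $\ge 1$.} Given this, strict feasibility of non-degenerate convex combinations is immediate, and the injectivity argument closes cleanly. The handling of ``boundary'' disjunctions (where $\pi^\top\hat x\in\{\pi_0,\pi_0+1\}$) is a minor technical point: one simply observes $\hat x$ satisfies at least one — possibly both — of the two branching constraints, so a consistent choice of $v(\hat x)$ always exists.
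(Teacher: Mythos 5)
Your argument is, at its core, the same as the paper's: every $0/1$ point satisfies the branching constraints of some leaf (since the disjunctions are legal), and no leaf's branching region can contain two distinct $0/1$ points, because their \emph{midpoint} lies in $\{0,\tfrac12,1\}^n$ with at least one coordinate equal to $\tfrac12$ and hence satisfies every cross-polytope inequality (each left-hand side is a sum of nonnegative terms, one of which equals $\tfrac12$), so the atom would be nonempty. That counting gives $2^n$ leaves and $|\mathcal{T}|\ge 2^{n+1}-1$, and this part of your write-up is correct and complete.

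Two of your embellishments, however, are false as stated and should be deleted rather than ``made rigorous.'' First, the claim that \emph{any} non-degenerate convex combination of two or more cube vertices is strictly feasible for $P_n$ is wrong: for $J=\emptyset$ the combination $\lambda\cdot 0+(1-\lambda)\hat x'$ with $\lambda$ close to $1$ has left-hand side $(1-\lambda)\,\ones^\top\hat x'$, which can be far below $\tfrac12$ (indeed $P_1=\{\tfrac12\}$, so almost no point of the segment $[0,1]$ is feasible). Only the midpoint (or combinations with weights bounded away from the bad vertex) works, and even then feasibility is generally non-strict ($\text{LHS}=\tfrac12$ exactly when one vertex violates the inequality and the other satisfies it with value $1$). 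Second, the step of ``pushing the midpoint slightly toward $\tfrac12\ones$'' is both unnecessary and unjustified: if the midpoint sits exactly on a branching hyperplane $\pi^\top x=\pi_0$ with $C_v$ containing $\pi^\top x\le\pi_0$, the perturbation can exit $\{x:C_v\}$. Since the atom $P_n\cap\{x:C_v\}$ is closed, the unperturbed midpoint already witnesses its nonemptiness; no interiority is needed. With those two detours removed, your proof coincides with the one in the paper.
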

\begin{proof}
In order to certify the integer-infeasibility of $P_n$, the atom of every leaf-node must be an empty set. We will verify that in order for the atom of a leaf $v$ to be empty, no more than one integer point is allowed to satisfy the branching constraints $C_v$ of $v$. This will complete the proof, since we then must have at least $2^n$ leaves. 

Consider any leaf $v$ of $\mathcal{T}$ such that two distinct integer points are feasible for its branching constraints. Then the average of these two points is a point in $\{0, 1, \frac{1}{2}\}^n$ with at least one component equal to $\frac{1}{2}$, which also satisfies the branching constraints. However, a point in $\{0, 1, \frac{1}{2}\}^n$ with at least one component equal to $\frac{1}{2}$  satisfies the constraints defining $P_n$. Thus the atom of the leaf $v$ is non-empty.  
\end{proof}

\begin{cor}\label{cor:lowdimpd}
Let $F\subseteq \mathbb{R}^n$ be a face of $P_n$ with dimension $d$. Then $\BBhardness(F) \geq 2^{d + 1} - 1.$
\end{cor}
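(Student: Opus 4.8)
The plan is to show that a $d$-dimensional face $F$ of $P_n$ is, up to an integral affine embedding, the cross-polytope $P_d$, and then to quote Proposition~\ref{lem:lowerbndsimple} in dimension $d$. First I would pin down the structure of such a face: it is cut out of $P_n$ by setting $n-d$ of the defining inequalities to equality, which for the faces relevant here amounts to fixing $n-d$ coordinates, say $x_i=0$ for $i\in S_0$ and $x_i=1$ for $i\in S_1$, leaving the $d$ coordinates $U:=[n]\setminus(S_0\cup S_1)$ free. Restricting the inequality of $P_n$ indexed by $J\subseteq[n]$ to $\{x : x_i=0,\ i\in S_0;\ x_i=1,\ i\in S_1\}$ turns it into $\sum_{i\in J\cap U}x_i+\sum_{i\in U\setminus J}(1-x_i)\ \geq\ \tfrac12-|S_0\setminus J|-|J\cap S_1|$; the right-hand side is maximized (equal to $\tfrac12$) exactly when $S_0\subseteq J$ and $J\cap S_1=\emptyset$, and as $J$ ranges over such sets $J\cap U$ ranges over all subsets of $U$, while every other $J$ gives a vacuous constraint on the face. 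Together with the box constraints on $U$, this yields $F=\{x:\ x_i=0\ \forall i\in S_0,\ x_i=1\ \forall i\in S_1,\ (x_i)_{i\in U}\in P_d\}$, i.e.\ $F=f(P_d)$ for the embedding map $f$ of Definition~\ref{defn:embed}, which is an integral affine injection.

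With this in hand the corollary is immediate from the machinery of Section~\ref{sec:tech_lemmas}: $P_d$ is integer-infeasible by definition and $F$ is integer-infeasible because $F\subseteq P_n$, so Corollary~\ref{cor:affine_infeas} applied to $f$ gives $\BBhardness(F)=\BBhardness(f(P_d))\geq\BBhardness(P_d)$, and Proposition~\ref{lem:lowerbndsimple} in dimension $d$ gives $\BBhardness(P_d)\geq 2^{d+1}-1$. Equivalently, one can rerun the proof of Proposition~\ref{lem:lowerbndsimple} directly on $F$: for each $b\in\{0,1\}^U$ let $z^b\in\{0,1\}^n$ be the point equal to $0$ on $S_0$, $1$ on $S_1$, and $b$ on $U$; for $b\neq b'$ the midpoint $\tfrac12(z^b+z^{b'})$ matches $F$ on the fixed coordinates and has $U$-part in $\{0,\tfrac12,1\}^U$ with at least one entry $\tfrac12$, hence lies in $P_d$ on $U$ and therefore lies in $F$. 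Thus in any tree $\T$ certifying the infeasibility of $F$, no leaf's branching constraints can be satisfied by two of the $2^d$ points $\{z^b\}$ (its atom would then be nonempty), so $\T$ has at least $2^d$ leaves and hence $|\T|\geq 2^{d+1}-1$.

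The real work, and the step I expect to be the main obstacle, is the first paragraph: correctly describing the $d$-dimensional faces of $P_n$ and checking that restricting the exponentially many defining inequalities to such a face collapses them to precisely the defining system of $P_d$ (no more, no fewer), which requires the small case analysis on $J$ sketched above. Once the identification $F\cong P_d$ is established, everything else is a direct appeal to Corollary~\ref{cor:affine_infeas} and Proposition~\ref{lem:lowerbndsimple}, or a one-line adaptation of the latter's midpoint argument.
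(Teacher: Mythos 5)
Your proof is correct and follows essentially the same route as the paper: identify $F$ with an embedded copy of $P_d$ (Definition~\ref{defn:embed}) and then invoke Corollary~\ref{cor:affine_infeas} together with Proposition~\ref{lem:lowerbndsimple}. The only difference is that you explicitly verify the step the paper merely asserts --- that restricting the defining inequalities of $P_n$ to a coordinate-fixing face collapses them to exactly the system defining $P_d$ --- and you add a redundant but valid direct midpoint argument.
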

\begin{proof}
Notice that $F$ is a copy of $P_d$ with $n -d$ components fixed to $0$ or $1$. Thus, {there exists an appropriate embedding affine transformation $f$ (Definition~\ref{defn:embed}) such that}
  $f(P_d) = F$. Also since $F \cap \mathbb{Z}^n = \emptyset$, we obtain that $f$, $P_d$ and $F$ satisfy all the conditions of Corollary~\ref{cor:affine_infeas}. Thus, $\BBhardness(F)  \geq \BBhardness(P_d) \geq  2^{d + 1} - 1$, where the last inequality follows from Proposition~\ref{lem:lowerbndsimple}.
\end{proof}

Next we show that the point $\frac{1}{2} \ones$  is hard to separate from $P_n$. For that we need a technical result that any halfspace that contains $\frac{1}{2}\ones$ must also contain a face of $[0,1]^n$ of dimension at least $\lfloor n/2 \rfloor$.  

\begin{lemma}\label{lem:contains_face}
Consider any $(\pi, \pi_0) \in \mathbb{R}^{n} \times \mathbb{R}$ such that $\ip{\pi}{\frac{1}{2}\ones} > \pi_0$. Let $G = \{x \in [0,1]^n : \ip{\pi}{x} > \pi_0\}$. There exists a face $F$ of $[0, 1]^n$ of dimension at least {$\lfloor n/2 \rfloor$} contained in $G$. 
\end{lemma}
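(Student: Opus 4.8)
The plan is to exhibit the face $F$ explicitly: fix the coordinates with the largest $|\pi_i|$ to well-chosen $0/1$ values and leave the rest free, and then check that the minimum of $\ip{\pi}{x}$ over this face is still at least $\ip{\pi}{\frac12\ones} = \frac12\sum_i \pi_i$, which by hypothesis exceeds $\pi_0$.

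First I would record the basic computation. For a subset $A \subseteq [n]$ and values $b_i \in \{0,1\}$ for $i \in A$, the set $F_{A,b} = \{x \in [0,1]^n : x_i = b_i \text{ for all } i \in A\}$ is a face of $[0,1]^n$ of dimension $n - |A|$, and since $\ip{\pi}{x}$ is separable,
$$\min_{x \in F_{A,b}} \ip{\pi}{x} \;=\; \sum_{i \in A} \pi_i b_i \;+\; \sum_{i \notin A} \min(0, \pi_i).$$
Writing $\pi_i = \pi_i^+ - \pi_i^-$ with $\pi_i^+ = \max(0,\pi_i) \ge 0$ and $\pi_i^- = \max(0,-\pi_i) \ge 0$, so that $|\pi_i| = \pi_i^+ + \pi_i^-$, the choice $b_i = 1$ when $\pi_i \ge 0$ and $b_i = 0$ otherwise makes each fixed coordinate contribute exactly $\pi_i^+$, so with this choice of $b$,
$$\min_{x \in F_{A,b}} \ip{\pi}{x} \;=\; \sum_{i \in A} \pi_i^+ \;-\; \sum_{i \notin A} \pi_i^-.$$

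Next I would make the greedy choice of $A$. Relabel the coordinates so that $|\pi_1| \ge |\pi_2| \ge \cdots \ge |\pi_n|$, take $A = \{1, \dots, \lceil n/2 \rceil\}$, and let $F := F_{A,b}$ for the $b$ above; then $F$ has dimension $n - \lceil n/2 \rceil = \floor{n/2}$. It suffices to show
$$\sum_{i \in A} \pi_i^+ - \sum_{i \notin A} \pi_i^- \;\ge\; \tfrac12 \sum_{i \in [n]} \pi_i,$$
since the right-hand side equals $\ip{\pi}{\frac12\ones} > \pi_0$, and this would give $\ip{\pi}{x} > \pi_0$ for every $x \in F$, i.e. $F \subseteq G$. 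Expanding $\sum_i \pi_i = \sum_i \pi_i^+ - \sum_i \pi_i^-$ and rearranging, the displayed inequality is equivalent to $\sum_{i \in A}(\pi_i^+ + \pi_i^-) \ge \sum_{i \notin A}(\pi_i^+ + \pi_i^-)$, that is, $\sum_{i \in A} |\pi_i| \ge \sum_{i \notin A} |\pi_i|$.

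Finally I would close with a counting step. Since $|A| = \lceil n/2 \rceil \ge \floor{n/2} = |[n]\setminus A|$, and by the relabeling $|\pi_i| \ge |\pi_{\lceil n/2\rceil}| \ge |\pi_j|$ whenever $i \in A$ and $j \notin A$, we get
$$\sum_{i \in A} |\pi_i| \;\ge\; |A|\cdot |\pi_{\lceil n/2\rceil}| \;\ge\; |[n]\setminus A|\cdot |\pi_{\lceil n/2\rceil}| \;\ge\; \sum_{j \notin A} |\pi_j|,$$
which completes the argument. There is essentially no serious obstacle; the only points needing care are keeping the strict inequality $\ip{\pi}{\frac12\ones} > \pi_0$ intact (strictness is used only in the final comparison against $\pi_0$) and the bookkeeping with the positive and negative parts $\pi_i^\pm$. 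The conceptual heart is the reduction to the rearrangement inequality $\sum_{i\in A}|\pi_i| \ge \sum_{i\notin A}|\pi_i|$, which is exactly why fixing the $\lceil n/2\rceil$ largest-magnitude coordinates — hence a face of dimension $\floor{n/2}$ — is enough.
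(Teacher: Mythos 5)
Your proof is correct and is essentially the paper's argument: both fix the $\lceil n/2\rceil$ coordinates of largest magnitude to their sign-favorable $0/1$ values and observe that the resulting face, of dimension $\lfloor n/2\rfloor$, has $\min_{x\in F}\ip{\pi}{x} \ge \tfrac12\sum_i\pi_i > \pi_0$ because the top half of the sorted magnitudes carries at least half the total. The only difference is cosmetic: the paper first treats $\pi \ge 0$ and then reduces the general case via a coordinate-flipping map, whereas you handle signs directly with the $\pi_i^{\pm}$ bookkeeping.
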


{
\begin{proof}
	By assumption we have $\pi_0 < \frac{1}{2} \sum_{i=1}^n \pi_i$. First consider the case where the vector $\pi$ is non-negative. By renaming the coordinates we can further assume that $\pi_1 \ge \pi_2 \ge \ldots \ge \pi_n \ge 0$. Then the face $F = \{ x \in [0,1]^n : x_i = 1,~\forall i \le \lceil n/2 \rceil\}$ has the desired properties: it has dimension $n - \lceil n/2 \rceil = \lfloor n/2 \rfloor$, and any $\hat{x} \in F$ has $$\ip{\pi}{\hat{x}} \ge \sum_{i = 1}^{\lceil n/2 \rceil} \pi_i \ge \frac{1}{2} \sum_{i=1}^n \pi_i > \pi_0,$$ where the second inequality follows from the ordering of the coordinates of $\pi$, and hence $F$ is contained in $G$. 

	The case when $\pi$ is not non-negative can be reduced to the above case by flipping coordinates. More precisely, let $J$ be the set of coordinates $i$ where $\pi_i < 0$, and consider the coordinate flipping operation (Definition \ref{defn:flip}) $f : \R^n \rightarrow \R^n$ that flips all coordinates in $J$. Notice that $$f(G) = \Big\{ x \in [0,1]^n ~:~ \sum_{i \in J} -\pi_i x_i + \sum_{i \notin J} \pi_i x_i \le \pi_0 - \sum_{i \in J} \pi_i \Big\},$$ and that defining the vector $\pi'$ as $\pi'_i = -\pi_i$ for $i \in J$ and $\pi'_i = \pi_i$ for $i \notin J$ and $\pi'_0 := \pi_0 - \sum_{i \in J} \pi_i$ we still have $\ip{\pi'}{\frac{1}{2}\ones} > \pi'_0$. Since $\pi'$ is non-negative, the previous argument shows that $f(G)$ has a face $F$ of $[0,1]^n$ of desired dimension, and hence $f^{-1}(F) = f(F)$ is a desired face of $[0,1]^n$ contained in $G$. 
\end{proof}
}

\begin{proposition}\label{prop:crosspoly_lb}
{For every $n$ such that $\lfloor n/2 \rfloor > 1$, $\BBdepth \left(\frac{1}{2}\ones, P_n\right) \geq 2^{\lfloor n/2\rfloor  + 1} - 1$.}
\end{proposition}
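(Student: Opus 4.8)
The plan is to lower-bound $\BBdepth(\tfrac12\ones, P_n)$ by showing that any BB tree $\T$ which separates $\tfrac12\ones$ from $P_n$ must have at least $2^{\lfloor n/2\rfloor}$ leaves, so that $|\T| \ge 2^{\lfloor n/2\rfloor+1}-1$. Suppose toward contradiction that $\T$ separates $\tfrac12\ones$ from $P_n$, i.e. $\tfrac12\ones \notin \conv(\T(P_n))$. By the hyperplane separation theorem there is a halfspace $\{x : \ip{\pi}{x} \le \pi_0\}$ containing $\conv(\T(P_n))$ with $\ip{\pi}{\tfrac12\ones} > \pi_0$; moreover we may take $\pi$ to be integral and $\pi_0$ integral by a small perturbation/scaling argument, though we do not even need integrality here. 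Applying Lemma~\ref{lem:contains_face} to this $(\pi,\pi_0)$, the open halfspace $G = \{x \in [0,1]^n : \ip{\pi}{x} > \pi_0\}$ contains a face $F$ of $[0,1]^n$ of dimension at least $\lfloor n/2\rfloor$. Since $\conv(\T(P_n)) \subseteq \{x : \ip{\pi}{x} \le \pi_0\}$, this face $F$ is disjoint from $\conv(\T(P_n))$, hence from $\T(P_n)$ itself.

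Next I would exploit that $F$ is itself (a copy of) a cross-polytope's ambient cube, and that $\T(P_n) \supseteq \T(F \cap P_n)$ essentially says $\T$, restricted to $F$, proves integer-infeasibility of the face $F\cap P_n$ of $P_n$. Concretely: $F$ is obtained from $[0,1]^n$ by fixing $n-d$ coordinates to $0$ or $1$ (where $d = \dim F \ge \lfloor n/2\rfloor$), and the corresponding face of $P_n$, call it $F_P := F \cap P_n$, is a copy of the lower-dimensional cross-polytope $P_d$ under an embedding affine map $f$ (as in Corollary~\ref{cor:lowdimpd}). The key point is that $F \cap \conv(\T(P_n)) = \emptyset$ forces $F \cap \T(P_n) = \emptyset$, and since $F_P \subseteq F$ and $\T(F_P) \subseteq \T(P_n)$ by monotonicity (Lemma~\ref{lem:monotonicity}), we get $\T(F_P) \subseteq F \cap \T(P_n) = \emptyset$. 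Wait — more carefully, $\T(F_P) \subseteq F_P \subseteq F$ and $\T(F_P) \subseteq \T(P_n)$, so $\T(F_P) \subseteq F \cap \T(P_n)$; but $F \cap \conv(\T(P_n)) = \emptyset$ gives $F \cap \T(P_n)=\emptyset$, hence $\T(F_P) = \emptyset$. Thus $\T$ is a BB tree proving integer-infeasibility of $F_P$, a $d$-dimensional face of $P_n$.

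Now I would invoke Proposition~\ref{lem:lowerbndsimple} (or equivalently Corollary~\ref{cor:lowdimpd}): any BB tree certifying integer-infeasibility of a copy of $P_d$ has at least $2^d$ leaves. Actually Corollary~\ref{cor:lowdimpd} bounds $\BBhardness(F_P)$, but what we need is a leaf bound on the specific tree $\T$; the cleanest route is to rerun the argument of Proposition~\ref{lem:lowerbndsimple} directly on $F_P$: if $\T(F_P) = \emptyset$ then for each leaf $v$ the atom $F_P \cap \{x : C_v\}$ is empty, and by the averaging argument (the midpoint of two feasible $0/1$ points lies in $\{0,\tfrac12,1\}$ with a $\tfrac12$ entry and is therefore in $P_n$, hence in $F_P$ since coordinate fixings are preserved) each leaf's branching constraints admit at most one $0/1$ point of $F_P$. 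Since $F_P$ has $2^d$ many $0/1$ points, $\T$ has at least $2^d \ge 2^{\lfloor n/2\rfloor}$ leaves, so $|\T| \ge 2^{\lfloor n/2\rfloor+1}-1$, and since $\tfrac12\ones \notin (P_n)_I$ (as $P_n$ is integer-infeasible) the quantity $\BBdepth(\tfrac12\ones,P_n)$ is well-defined and at least this bound. The hypothesis $\lfloor n/2\rfloor > 1$ ensures $F_P$ has dimension at least $2$ so that the averaging argument has room to produce a genuinely fractional point (and there are at least two $0/1$ points in play).

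The main obstacle I anticipate is the passage from "$\T$ separates $\tfrac12\ones$" to "$\T$ proves infeasibility of a low-dimensional face": one must be careful that the separating hyperplane from the convex separation theorem interacts correctly with Lemma~\ref{lem:contains_face} (which is stated for strict inequality at $\tfrac12\ones$), and that "disjoint from the convex hull of the leaves' atoms" genuinely upgrades to "the tree proves infeasibility of $F_P$" rather than merely "separates some point of $F_P$." The averaging/midpoint argument on the face also needs the observation that fixing coordinates to $0$ or $1$ is compatible with taking midpoints of $0/1$ points, which is immediate but worth stating. Everything else is bookkeeping with the already-established monotonicity and embedding lemmas.
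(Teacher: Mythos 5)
Your proof is correct and follows essentially the same route as the paper's: separate $\tfrac12\ones$ by a hyperplane, use Lemma~\ref{lem:contains_face} to find a $\lfloor n/2\rfloor$-dimensional cube face $F$ inside the cut-off halfspace, conclude via monotonicity (Lemma~\ref{lem:monotonicity}) that $\T(P_n \cap F) = \emptyset$, and then invoke the counting argument of Proposition~\ref{lem:lowerbndsimple}/Corollary~\ref{cor:lowdimpd} on that face. The only slip is cosmetic: the $2^d$ integer points you count at the end are the $0/1$ points of the cube face $F$, not of $F_P = F \cap P_n$ (which is integer-infeasible and therefore contains none).
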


\begin{proof}
For sake of contradiction suppose there exists a tree $\mathcal{T}$ of size less than {$2^{\lfloor n/2\rfloor + 1 } - 1$} such that $\frac{1}{2}\ones \not \in \conv(\T(P_n))$. By the hyperplane separation theorem, there exists $(\pi, \pi_0) \in \mathbb{R}^{n} \times \mathbb{R}$ such that $\ip{\pi}{\frac{1}{2}\ones} > \pi_0$ and $\ip{\pi}{x} \leq \pi_0$ for all $x \in \conv(\T(P_n))$. By Lemma \ref{lem:contains_face}, let $F$ be a face of $[0,1]^n$ of dimension {$\lfloor n/2 \rfloor$} contained in $\{x \in \mathbb{R}^n \,|\, \langle \pi, x\rangle > \pi_0 \}$; notice that $P_n \cap F$ is a face of $P_n$ of the same dimension.
Since $\T(P_n) \subseteq \textup{conv}(\T(P_n)) \subseteq \mathbb{R}^n\setminus \{x \in \mathbb{R}^n \,|\, \langle \pi, x\rangle > \pi_0 \} \subseteq \mathbb{R}^n \setminus F$  and $\T(F) \subseteq F$, we have that $\T(P_n)$ and $\T(F)$ are disjoint and hence from Lemma \ref{lem:monotonicity} we get $\T(P_n \cap F) \subseteq \T(P_n) \cap \T(F) = \emptyset$, 
i.e., the atoms of the leaves of $\mathcal{T}$ applied to $P_n \cap F$ are all empty. Thus, $\mathcal{T}$ is a branch-and-bound tree to certify the integer-infeasibility of $P_n \cap F$
 of size less than {$2^{\lfloor n/2 \rfloor + 1} - 1$}.  However, this contradicts Corollary~\ref{cor:lowdimpd}.

\end{proof}


\section{BB hardness for perturbed cross-polytope}\label{sec:perturbed_cp}

We now show that exponential BB hardness for the cross-polytope persists even after adding Gaussian noise to the entries of the contraint matrix. 
 This implies an exponential lower bound even for a ``smoothed analysis" of general branch-and-bound.

	We consider the cross-polytope $P_n$ but where we add an independent gaussian noise $N(0,1/20^2)$ with mean 0 and variance $1/20^2$ to each coefficient in the left-hand side of its defining inequalities, and replace the {right-hand sides} by 
{approximately}
$\frac{n}{20}$ instead of the traditional $\frac{1}{2}$. More precisely, we consider the following random polytope $Q$:
	\begin{align*}
		Q ~:=~ \left\{x \in [0,1]^n ~:~ \sum_{i \in I} \left(1 + N\left(0,\tfrac{1}{20^2}\right)\right) x_i + \sum_{i \notin I} \Big(1 - \left(1 + N \left(0,\tfrac{1}{20^2}\right)\right) x_i\Big) \ge \frac{1.6 n}{20}, ~~~~\forall I \subseteq [n]\right\}
	\end{align*}
	where each occurrence of $N(0, \frac{1}{20^2})$ is independent. 
	
	\begin{thm} \label{thm:main}
		With probability at least $1 - \frac{2}{e^{n/2}}$ the polytope $Q$ is integer-infeasible and every BB tree proving its infeasibility has at least $2^{\Omega(n)}$ nodes. 
	\end{thm}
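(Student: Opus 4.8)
### Proof proposal

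The plan is to show that, with high probability, the random polytope $Q$ contains a $\lfloor n/2\rfloor$-dimensional face of $[0,1]^n$ on which it ``looks like'' a shifted cross-polytope — more precisely, a polytope to which the simple hardness argument of Proposition~\ref{lem:lowerbndsimple} still applies — and then conclude via the same face-restriction trick used in the proof of Proposition~\ref{prop:crosspoly_lb}. The key probabilistic fact to establish is that each Gaussian perturbation $N(0,\tfrac1{20^2})$ is small: by a standard tail bound, $\Pr[|N(0,\tfrac1{20^2})| > \tfrac1{20}] \le 2e^{-200/2}$, which is tiny. There are $2^n$ inequalities, each with $n$ perturbed coefficients, so a union bound over all $n 2^n$ Gaussians shows that with probability at least $1 - 2n2^n e^{-100} \ge 1 - \tfrac{2}{e^{n/2}}$ (for $n$ large enough; smaller $n$ handled separately or by absorbing constants), \emph{every} perturbation has absolute value at most $\tfrac1{20}$. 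Condition on this event for the rest of the argument.

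First I would verify integer-infeasibility of $Q$ on this event. Take any $\hat x\in\{0,1\}^n$ and let $I=\{i:\hat x_i=1\}$. For the inequality indexed by this same $I$, the left-hand side evaluated at $\hat x$ is $\sum_{i\in I}(1+\xi_i)\cdot 1 + \sum_{i\notin I}\big(1-(1+\xi_i)\cdot 0\big) = |I| + \sum_{i\in I}\xi_i + (n-|I|) = n + \sum_{i\in I}\xi_i \ge n - \tfrac{n}{20} = \tfrac{19n}{20}$. Wait — that is large, not small; the correct index to cut off $\hat x$ is $I = \{i : \hat x_i = 0\}$, mirroring the remark before Proposition~\ref{lem:lowerbndsimple}. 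With $I=\{i:\hat x_i=0\}$, every term $x_i$ with $i\in I$ is $0$ and every term $(1-(1+\xi_i)x_i)$ with $i\notin I$ is $1-(1+\xi_i) = -\xi_i \le \tfrac1{20}$, so the left-hand side is at most $\tfrac{n}{20} < \tfrac{1.6n}{20}$, violating that inequality; hence $\hat x\notin Q$. So $Q\cap\mathbb Z^n=\emptyset$.

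Next I would run the argument of Proposition~\ref{prop:crosspoly_lb} verbatim at the point $\tfrac12\ones$. Suppose for contradiction a BB tree $\T$ of size less than $2^{\lfloor n/2\rfloor+1}-1$ separates $\tfrac12\ones$ from $Q$. By hyperplane separation there is $(\pi,\pi_0)$ with $\ip{\pi}{\tfrac12\ones}>\pi_0$ and $\ip{\pi}{x}\le\pi_0$ on $\conv(\T(Q))$; by Lemma~\ref{lem:contains_face} there is a face $F$ of $[0,1]^n$ of dimension $\lfloor n/2\rfloor$ contained in $\{x:\ip{\pi}{x}>\pi_0\}$, so $\T(Q)$ and $\T(F)$ are disjoint and $\T(Q\cap F)=\emptyset$ by Lemma~\ref{lem:monotonicity}. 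Thus $\T$ certifies the integer-infeasibility of $Q\cap F$, which is a BB tree of size less than $2^{\lfloor n/2\rfloor+1}-1$. It remains to derive a contradiction, and this is the main obstacle: I need a ``perturbed analog of Corollary~\ref{cor:lowdimpd}'', i.e. $\BBhardness(Q\cap F)\ge 2^{\lfloor n/2\rfloor+1}-1$. For that I would re-run the proof of Proposition~\ref{lem:lowerbndsimple} directly on $Q\cap F$: if a leaf's branching constraints admit two distinct integer points of $Q\cap F$, their midpoint $m$ lies in $\{0,\tfrac12,1\}^n$ with some coordinate $\tfrac12$ and satisfies all branching constraints; I must check $m\in Q$. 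Fix $F = \{x : x_j = b_j,\ j\in K\}$ with $|K| = n - \lfloor n/2\rfloor$; a coordinate of $m$ equal to $\tfrac12$ is necessarily a free coordinate $i_0\notin K$. For any inequality indexed by $I$, the worst case (smallest LHS) sets every free non-$\tfrac12$ coordinate of $m$ adversarially; but even then $i_0$ contributes either $(1+\xi_{i_0})\tfrac12 \ge \tfrac{19}{40}$ (if $i_0\in I$) or $1-(1+\xi_{i_0})\tfrac12 \ge \tfrac{19}{40}$ (if $i_0\notin I$), and every other coordinate contributes $\ge -\tfrac1{20}$ in the worst case, giving LHS $\ge \tfrac{19}{40} - \tfrac{n-1}{20}$ — which is \emph{not} obviously $\ge \tfrac{1.6n}{20}$. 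So the naive midpoint bound fails, and the real content of the proof is choosing the right-hand-side constant $\tfrac{1.6n}{20}$ and the noise scale $\tfrac1{20}$ so that a more careful accounting works; in particular one wants the face $F$ to fix roughly half the coordinates to $1$ (not $0$), so that those fixed $1$-coordinates contribute $\approx +1$ each to most inequalities and easily overwhelm the $O(n/20)$ noise and the threshold $1.6n/20$. Concretely, I would strengthen Lemma~\ref{lem:contains_face} for this setting to produce a face fixing $\lceil n/2\rceil$ coordinates to the value $1$ (its proof already does exactly this in the non-negative case, and the flipping reduction can be arranged to preserve ``fixed to $1$''), and then on such a face every inequality indexed by $I$ has LHS at $m$ bounded below by $(\text{number of fixed-}1\text{ coords in }I) \cdot \tfrac{19}{20} + \tfrac{19}{40} - \tfrac{n}{20} \ge \tfrac{\lceil n/2\rceil\cdot 19}{20} - \tfrac{n}{20} \gg \tfrac{1.6n}{20}$ once the fixed-$1$ coordinates dominate — giving $m\in Q\cap F$ and hence a non-empty atom, the desired contradiction. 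Assembling these pieces yields $\BBdepth(\tfrac12\ones, Q)\ge 2^{\lfloor n/2\rfloor+1}-1 = 2^{\Omega(n)}$, and therefore $\BBhardness(Q)\ge 2^{\Omega(n)}$, all on an event of probability at least $1-\tfrac2{e^{n/2}}$.
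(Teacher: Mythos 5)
There are two genuine flaws here, and each one on its own sinks the argument. First, the conditioning event is essentially impossible. You bound $\Pr[|N(0,\tfrac{1}{20^2})|>\tfrac1{20}]$ by $2e^{-100}$, but $\tfrac1{20}$ is the \emph{standard deviation} of this Gaussian, not something far out in its tail: the correct value is $\Pr[|N(0,1)|>1]\approx 0.32$, a constant. So the event that all $n2^n$ perturbations are at most $\tfrac1{20}$ in absolute value has probability about $(0.68)^{n2^n}$, and no union bound rescues it. If you instead raise the threshold to $t=\Theta(\sqrt{n})/20$ so that the union bound over $n2^n$ Gaussians goes through, the per-constraint noise you can then guarantee is only $n\cdot t=\Theta(n^{3/2})/20$, which swamps the threshold $\tfrac{1.6n}{20}$ and breaks even your integer-infeasibility step. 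The paper avoids this entirely by never bounding coefficients individually: for each constraint and each candidate point it treats the \emph{total} noise as a single Gaussian of variance at most $n/20^2$, whose magnitude is $O(n)/20$ with enough room for a union bound over $2^n\cdot 3^n$ (point, constraint) pairs --- that aggregation is exactly what the constants $\tfrac{1.6n}{20}$ and $\tfrac1{20}$ were tuned for.

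Second, even granting your event, the face-restriction/midpoint strategy cannot work for this polytope, and your proposed patch does not repair it. The left-hand side of the constraint indexed by $I$, evaluated at a point $x$, is approximately $\sum_i |x_i-\chi_{I^c}(x)_i|$, i.e.\ the $\ell_1$-distance from $x$ to the cube vertex $\chi_{I^c}$. A midpoint $m$ of two integer points in a leaf has, in the worst case, a single coordinate equal to $\tfrac12$, so the constraint indexed by the $I$ matching the $0/1$ pattern of $m$ has left-hand side about $\tfrac12$ plus noise --- far below $\tfrac{1.6n}{20}=0.08n$. Your fix of choosing $F$ to fix $\lceil n/2\rceil$ coordinates to $1$ fails because a coordinate fixed to $1$ contributes $1+\xi_j\approx 1$ only when $j\in I$; when $j\notin I$ it contributes $-\xi_j\approx 0$, and $I$ ranges over \emph{all} subsets, including those disjoint from your fixed coordinates. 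So the ``at most one integer point per leaf'' conclusion of Proposition~\ref{lem:lowerbndsimple} is simply false for $Q\cap F$, and a perturbed Corollary~\ref{cor:lowdimpd} with the bound $2^{\lfloor n/2\rfloor+1}-1$ is not available. The paper's proof takes a genuinely different route: a leaf of a tree with fewer than $2^{(1-h(0.4))n}$ leaves must contain at least $2^{h(0.4)n}$ integer points, Sauer--Shelah then forces its convex hull to contain a point with at least $0.4n$ coordinates equal to $\tfrac12$, and Lemma~\ref{lemma:half} shows (with high probability) that every such point lies in $Q$ --- a point with $\Omega(n)$ half-coordinates is what is needed to clear a threshold growing linearly in $n$, and no midpoint of two integer points provides that.
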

	
	{Recall that for independent gaussians $Y \sim N(\mu,\sigma^2)$ and $Y' \sim N(\mu', (\sigma')^2)$, their sum $Y+Y'$ is distributed as $N(\mu+\mu',\sigma^2 + (\sigma')^2)$, and for a centered gaussian $Y \sim N(0,\sigma^2)$ the scaled random variable $\alpha Y$ is distributed as $N(0, \alpha^2 \sigma^2)$ for all $\alpha \in \R$.}
	
	We need the following standard tail bound for the Normal distribution (see equation (2.10) of \cite{vershynin2018high}).
	
	\begin{fact} \label{fact:gauss}
		Let $X \sim N(0, \sigma^2)$ be a mean zero gaussian with variance $\sigma^2$. Then for every $p \in (0,1)$, with probability at least $1-p$ we have $X \le \sigma \sqrt{2 \ln(1/p)}$, and with probability at least $1-p$ we have $X \ge - \sigma \sqrt{2 \ln (1/p)}$.
	\end{fact}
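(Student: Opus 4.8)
The plan is to reduce the statement to the standard sub-Gaussian tail estimate and prove it via a Chernoff (moment-generating-function) argument. First I would observe that the two claimed inequalities are equivalent by symmetry: since $X \sim N(0,\sigma^2)$ has the same distribution as $-X$, the lower-tail bound $\Pr[X \ge -\sigma\sqrt{2\ln(1/p)}] \ge 1-p$ follows immediately from the upper-tail bound applied to $-X$. So it suffices to show $\Pr[X > \sigma\sqrt{2\ln(1/p)}] \le p$.

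Next I would normalize the variance away. Writing $Z := X/\sigma \sim N(0,1)$ and setting $t := \sqrt{2\ln(1/p)}$ — which is a well-defined strictly positive real, since $p \in (0,1)$ forces $\ln(1/p) > 0$ — the target becomes the clean estimate
$$\Pr[Z > t] \le e^{-t^2/2} = p.$$

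The core step is the exponential Markov (Chernoff) bound. For any $\lambda > 0$, applying Markov's inequality to the nonnegative random variable $e^{\lambda Z}$ gives
$$\Pr[Z > t] = \Pr[e^{\lambda Z} > e^{\lambda t}] \le e^{-\lambda t}\,\E[e^{\lambda Z}].$$
Here I would invoke the standard moment-generating function of a standard Gaussian, $\E[e^{\lambda Z}] = e^{\lambda^2/2}$ (obtained by completing the square in the defining integral), to get $\Pr[Z > t] \le e^{\lambda^2/2 - \lambda t}$. Optimizing the exponent over $\lambda > 0$, the minimum of $\lambda^2/2 - \lambda t$ is attained at $\lambda = t$ and equals $-t^2/2$; since $t > 0$, this is an admissible choice. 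This yields $\Pr[Z > t] \le e^{-t^2/2} = p$, and undoing the normalization recovers the stated bound.

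There is no genuine obstacle here: the result is a textbook sub-Gaussian estimate and every step is routine. The only points meriting a line of care are (i) the symmetry reduction, so that only a single tail must be controlled, and (ii) checking that $t = \sqrt{2\ln(1/p)}$ is strictly positive for $p \in (0,1)$, which guarantees that the optimal multiplier $\lambda = t$ is a legitimate positive choice in the Chernoff bound (there is no degeneracy as $p \to 1$, where the bound becomes weak but remains valid). The Gaussian MGF computation itself is standard and can simply be cited alongside the reference to \cite{vershynin2018high}.
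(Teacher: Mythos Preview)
Your proof is correct. The paper does not actually prove this fact at all: it is stated as a standard tail bound and simply attributed to equation (2.10) of \cite{vershynin2018high}. Your Chernoff/MGF argument is exactly the standard derivation behind that citation, so you have supplied a self-contained proof where the paper opts for a black-box reference; there is nothing further to compare.
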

	
	Let $LHS_I(x)$ be the {left-hand-side }
of the constraint of $Q$ indexed by the set $I$ evaluated at the point $x$.

	\begin{lemma} \label{lemma:inf}
		With probability at least $1 - \frac{1}{e^{n/2}}$ the polytope $Q$ is integer-infeasible. 
	\end{lemma}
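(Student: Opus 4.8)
The plan is to show that with high probability, every integer point $\hat{x} \in \{0,1\}^n$ is cut off by one of the random constraints of $Q$. Fix such a point $\hat{x}$ and let $I = \{i \in [n] : \hat{x}_i = 1\}$ (the ``type'' of $\hat{x}$). I would evaluate $LHS_I(\hat{x})$: for $i \in I$ we have $x_i = 1$ contributing $(1 + N(0,\tfrac{1}{20^2}))$, and for $i \notin I$ we have $x_i = 0$ contributing $1$. Thus $LHS_I(\hat{x}) = n + \sum_{i \in I}N(0,\tfrac{1}{20^2})$, which by the reproductive property of independent Gaussians recalled above is distributed as $n + N(0, \tfrac{|I|}{20^2})$. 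The key point is that the deterministic part is $n$, which is far larger than the right-hand side $\tfrac{1.6n}{20} = 0.08 n$, so the constraint indexed by $I$ is violated at $\hat{x}$ unless the Gaussian term is extremely negative — roughly smaller than $-(n - 0.08n) = -0.92 n$.

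The next step is a tail bound. Using Fact~\ref{fact:gauss} with $\sigma^2 = \tfrac{|I|}{20^2} \le \tfrac{n}{20^2}$, so $\sigma \le \tfrac{\sqrt n}{20}$, the probability that $\sum_{i \in I} N(0,\tfrac{1}{20^2}) \le -0.92n$ is at most the probability that a single $N(0,\tfrac{n}{20^2})$ variable falls below $-0.92 n$; choosing $p$ appropriately in Fact~\ref{fact:gauss} (i.e. solving $\tfrac{\sqrt n}{20}\sqrt{2\ln(1/p)} = 0.92 n$, which gives $\ln(1/p) = \Theta(n)$), this probability is at most $e^{-cn}$ for some absolute constant $c$ comfortably larger than $\ln 2 + \tfrac{1}{2}$. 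Then I take a union bound over all $2^n$ integer points: the probability that some $\hat{x} \in \{0,1\}^n$ survives all the constraints is at most $2^n \cdot e^{-cn} \le e^{-n/2}$, provided $c \ge \ln 2 + \tfrac12$, which holds with the stated constants. Finally, since $Q \cap \Z^n \subseteq Q \cap \{0,1\}^n$ (as $Q \subseteq [0,1]^n$), ruling out $0/1$ points rules out all integer points, so $Q$ is integer-infeasible with probability at least $1 - e^{-n/2}$.

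The main obstacle — really the only thing requiring care — is bookkeeping the constants: one must check that the gap between the deterministic term $n$ and the right-hand side $0.08n$ translates, via the $\tfrac{1}{20}$-scale of the noise, into a tail exponent strictly beating the $2^n$ union bound with room to spare for the $e^{-n/2}$ slack. The variance being at most $\tfrac{n}{20^2}$ (not larger) is what makes this work: a deviation of order $n$ is a deviation of order $\sqrt n$ standard deviations, giving a Gaussian tail of order $e^{-\Theta(n)}$ with a large constant. No subtlety beyond constant-chasing is expected; the structure of the argument is exactly the ``every vertex of the hypercube is separated by its own constraint'' paradigm already used (in the noiseless case) in the remark preceding Proposition~\ref{lem:lowerbndsimple}.
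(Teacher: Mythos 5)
There is a genuine error here: you picked the wrong constraint, and as a consequence the direction of the inequality is flipped. With your choice $I = \{i : \hat{x}_i = 1\}$ you correctly compute $LHS_I(\hat{x}) = n + \sum_{i \in I} N(0,\tfrac{1}{20^2})$, but the constraints of $Q$ read $LHS_I(x) \ge \tfrac{1.6n}{20}$, so a constraint is violated when its left-hand side is too \emph{small}, not too large. Since the deterministic part of your left-hand side is $n \gg 0.08n$, the constraint you analyze is \emph{satisfied} at $\hat{x}$ unless the Gaussian term drops below $-0.92n$ --- exactly the opposite of your claim that it is ``violated unless the Gaussian term is extremely negative.'' Your tail bound $\Pr[\sum_{i \in I} N(0,\tfrac{1}{20^2}) \le -0.92n] \le e^{-cn}$, read correctly, shows that this particular constraint almost never cuts off $\hat{x}$; the union bound built on it therefore proves nothing about integer-infeasibility.

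The fix is to index the constraint by the \emph{zero} coordinates, which is what the paper does (and what the noiseless cross-polytope argument already suggests): take $I = \{i : \hat{x}_i = 0\}$. Then the first sum vanishes (each $x_i = 0$ for $i \in I$) and the second sum equals $\sum_{i \notin I}\bigl(1 - (1 + N_i)\bigr) = -\sum_{i \notin I} N_i$, so $LHS_I(\hat{x})$ is a mean-zero Gaussian with variance at most $\tfrac{n}{20^2}$. One then needs the \emph{upper} tail: by Fact~\ref{fact:gauss}, with probability at least $1 - \tfrac{1}{e^{n/2}2^n}$ this Gaussian stays below $\tfrac{\sqrt{n}}{20}\sqrt{2\ln(e^{n/2}2^n)} = \tfrac{n}{20}\sqrt{1+2\ln 2} < \tfrac{1.6n}{20}$, so the constraint is violated and $\hat{x} \notin Q$; a union bound over the $2^n$ vertices finishes, together with your (correct) observation that $Q \subseteq [0,1]^n$ reduces integer points to $0/1$ points. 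Your overall architecture --- one tailored constraint per vertex, a Gaussian tail bound, a union bound over $2^n$ points --- matches the paper's, and the constants do work out, but only after the index set is corrected so that the deterministic part of the left-hand side is $0$ rather than $n$.
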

	
	\begin{proof}
		Fix a 0/1 point $x \in \{0,1\}^n$, and let $I \subseteq [n]$ be the set of coordinates $i$ where $x_i = 0$. Let $I^c = [n]\setminus I$. Notice $LHS_I(x)$ is a gaussian random variable with mean 0 and variance $\frac{|I^c|}{20^2} \le \frac{n}{20^2}$, and so from Fact \ref{fact:gauss}, with probability at least $1 - \frac{1}{e^{n/2} 2^n}$ we have $$LHS_I(x) \le \frac{\sqrt{n}}{20} \sqrt{2 \ln(e^{n/2} 2^n)}= \frac{\sqrt{n}}{20} \sqrt{(1 + 2 \ln 2) n} < \frac{1.6n}{20},$$ i.e., the point $x$ does not satisfy the inequality of $Q$ indexed by $I$, and so does not belong to $Q$. Taking a union bound over all $2^n$ points $x \in \{0,1\}^n$, with probability at least $1 - \frac{1}{e^{n/2}}$ none of them belong to $Q$. This concludes the proof. 
	\end{proof}
	
	\begin{lemma} \label{lemma:half}
		With probability at least $1 - \frac{1}{e^{n/2}}$ the polytope $Q$ contains all points $\{0,\frac{1}{2},1\}^n$ that have at least $s = \frac{4n}{10}$ coordinates with value $\frac{1}{2}$. (We call this set of points $\textup{Half}_s$.)
	\end{lemma}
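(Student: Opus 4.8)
The plan is to show that for any fixed point $x \in \textup{Half}_s$, with very high probability $x$ satisfies all $2^n$ defining inequalities of $Q$, and then take a union bound over the (at most $2^n$) points of $\textup{Half}_s$. Fix such an $x$ and fix a set $I \subseteq [n]$. First I would compute the mean and variance of $LHS_I(x)$. Writing out $LHS_I(x) = \sum_{i \in I} (1 + N_i) x_i + \sum_{i \notin I} \big(1 - (1 + N_i) x_i\big)$ where the $N_i \sim N(0, 1/20^2)$ are independent, the deterministic part is $\sum_{i \in I} x_i + \sum_{i \notin I}(1 - x_i)$, which counts (with weight $1/2$ for each half-coordinate) how ``far'' $x$ is from the 0/1 point that would be cut off by $I$; the random part is $\sum_{i \in I} N_i x_i - \sum_{i \notin I} N_i x_i$, a centered gaussian whose variance is $\frac{1}{20^2}\sum_i x_i^2 \le \frac{n}{20^2}$.

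The key deterministic observation is a lower bound on $\sum_{i \in I} x_i + \sum_{i \notin I}(1 - x_i)$ valid for every $x \in \textup{Half}_s$ and every $I$. Each coordinate contributes at least $0$, and each of the $s = \frac{4n}{10}$ coordinates equal to $\frac12$ contributes exactly $\frac12$ regardless of whether it lies in $I$; hence the deterministic part is at least $\frac{s}{2} = \frac{2n}{10} = \frac{4n}{20}$. So $LHS_I(x) \ge \frac{4n}{20} + Z$ where $Z \sim N(0, \sigma^2)$ with $\sigma \le \frac{\sqrt n}{20}$. We need $LHS_I(x) \ge \frac{1.6 n}{20}$, i.e. it suffices that $Z \ge -\frac{2.4 n}{20}$. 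By Fact \ref{fact:gauss}, $Z \ge -\sigma\sqrt{2\ln(1/p)}$ with probability at least $1-p$; taking $p = \frac{1}{e^{n/2} 4^n}$ gives $\sigma\sqrt{2\ln(1/p)} \le \frac{\sqrt n}{20}\sqrt{(1 + 4\ln 2)n} = \frac{n}{20}\sqrt{1 + 4\ln 2} < \frac{2.4 n}{20}$ since $1 + 4\ln 2 \approx 3.77 < 5.76$. Thus each inequality is satisfied with probability at least $1 - \frac{1}{e^{n/2}4^n}$.

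Finally I would union-bound: there are at most $2^n$ points in $\textup{Half}_s$ and $2^n$ inequalities per point, so at most $4^n$ bad events, each of probability at most $\frac{1}{e^{n/2} 4^n}$, giving failure probability at most $\frac{1}{e^{n/2}}$. I do not anticipate a serious obstacle here; the only point requiring slight care is getting the constants to line up (the choice of right-hand side $\frac{1.6n}{20}$, the threshold $s = \frac{4n}{10}$, and the noise level $\frac{1}{20^2}$ are evidently tuned precisely so that $\frac{4n}{20} - \frac{1.6n}{20} = \frac{2.4n}{20}$ comfortably exceeds $\frac{\sqrt n}{20}\sqrt{(1+4\ln 2)n}$), and making sure the same noise level is used consistently with the integer-infeasibility argument in Lemma \ref{lemma:inf} so that both high-probability events can be intersected in Theorem \ref{thm:main}.
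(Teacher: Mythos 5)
Your argument is essentially the paper's: decompose $LHS_I(x)$ into a deterministic part plus a centered gaussian, observe that each half-coordinate contributes exactly $\tfrac12$ to the deterministic part regardless of $I$ (so it is at least $\tfrac{s}{2}=\tfrac{4n}{20}$), bound the variance by $\sum_i x_i^2/20^2 \le n/20^2$, apply the gaussian tail bound, and union-bound over points and constraints. The paper does exactly this (it also tracks the nonnegative term $n_{\mathrm{diff}}$, which you correctly drop).

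The one slip is in the union bound: you assert $|\textup{Half}_s|\le 2^n$, but $\textup{Half}_s\subseteq\{0,\tfrac12,1\}^n$ and its cardinality is $\sum_{j\ge s}\binom{n}{j}2^{n-j}$, which for $s=\tfrac{4n}{10}$ is roughly $2^{(h(0.4)+0.6)n}\approx 2^{1.57n}>2^n$. The safe count is $3^n$ points times $2^n$ constraints, i.e.\ $6^n$ bad events (this is what the paper uses), so your choice $p=\tfrac{1}{e^{n/2}4^n}$ does not yield the claimed failure probability $\tfrac{1}{e^{n/2}}$. The repair is immediate: take $p=\tfrac{1}{e^{n/2}\cdot 2^n\cdot 3^n}$, for which the required deviation is $\tfrac{\sqrt n}{20}\sqrt{2\ln(1/p)}=\tfrac{n}{20}\sqrt{1+2\ln 6}\approx\tfrac{2.15n}{20}<\tfrac{2.4n}{20}$, so the constants still have enough slack and the rest of your argument goes through unchanged.
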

	
	\begin{proof}
		Consider $x \in \textup{Half}_s$. Fix $I \subseteq [n]$. Let $n_{\textrm{half}} \ge s = \frac{4n}{10}$ be the number of coordinates of $x$ with value $\frac{1}{2}$, $n_{\textrm{ones}}$ be the number of coordinates with value 1, and let $n_{\textrm{diff}}$ be the number of coordinates $i$ where either $i \in I$ and $x_i = 1$, or $i \notin I$ and $x_i = 0$. We see that $LHS_I(x)$ distributed as 
		\begin{align*}
			LHS_I(x) &=_{d} \frac{n_{\textrm{half}}}{2} + n_{\textrm{diff}} + \frac{1}{2} N(0, \tfrac{n_\text{half}}{20^2}) + N(0, \tfrac{n_{\textrm{ones}}}{20^2})\\
			&=_{d} \frac{n_{\textrm{half}}}{2} + n_{\textrm{diff}} + N(0, (\tfrac{n_\text{half}}{4} + n_{\textrm{ones}}) \cdot \tfrac{1}{20^2}),
		\end{align*}
		where again the occurrences of $N(0,\cdot)$ are independent. Since the last term is a gaussian with variance at most $\frac{n}{20^2}$, we get that with probability at least $1- \frac{1}{e^{n/2} \cdot 2^n \cdot 3^n}$ 
		\begin{align*}
			LHS_I(x) \ge \frac{n_{\textrm{half}}}{2} + n_{\textrm{diff}} - \frac{1}{20} \sqrt{n} \sqrt{2 \log(e^{n/2} \cdot 2^n \cdot 3^n)} \ge \frac{4n}{20} - \frac{2.4n}{20} = \frac{1.6n}{20},
		\end{align*}
	that is, $x$ satisfies the constraint of $Q$ indexed by $I$. 
	
	Taking a union bound over all $x \in \textup{Half}_s$ and all subsets $I \subseteq [n]$, we see that all points in $\textup{Half}_s$ satisfy all constraints of $Q$ with probability at least $1 - \frac{1}{e^{n/2}}$. This concludes the proof. 
	\end{proof}
	
	\begin{lemma} \label{lemma:shatter}
		Let $F \subseteq \{0,1\}^n$ be a set of 0/1 points. For any $k$, if $|F| > \sum_{i \le k-1} \binom{n}{i}$, then $\conv(F)$ contains a point with at least $k$ coordinates of value 1/2. 
	\end{lemma}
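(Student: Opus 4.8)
The plan is to recognize this as (essentially) the Sauer--Shelah lemma: the hypothesis $|F| > \sum_{i \le k-1}\binom{n}{i}$ forces $F$ to ``shatter'' some set $S$ of $k$ coordinates, and once a coordinate set is shattered one can build a point of $\conv(F)$ that equals $1/2$ on exactly those coordinates. Rather than invoking Sauer--Shelah as a black box, I would prove the statement directly and self-containedly by induction on $n$, carrying the conclusion about half-coordinates through the induction.

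First I would dispose of the trivial base cases: if $k \le 0$ the claim is vacuous/immediate, and if $n = 0$ the hypothesis cannot hold when $k \ge 1$ (since then $\sum_{i \le k-1}\binom{0}{i} \ge 1 \ge |F|$). For the inductive step, split $F$ on the last coordinate: let $F_0 = \{f \in F : f_n = 0\}$ and $F_1 = \{f \in F : f_n = 1\}$, let $F_0', F_1' \subseteq \{0,1\}^{n-1}$ be their projections onto the first $n-1$ coordinates (the projection is injective on each $F_j$), and let $G = F_0' \cap F_1'$ be the set of length-$(n-1)$ prefixes that occur in $F$ both with $0$ and with $1$ appended. Then $|F| = |F_0'| + |F_1'| = |F_0' \cup F_1'| + |G|$, while Pascal's identity gives $\sum_{i \le k-1}\binom{n}{i} = \sum_{i \le k-1}\binom{n-1}{i} + \sum_{i \le k-2}\binom{n-1}{i}$. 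Hence either $|F_0' \cup F_1'| > \sum_{i \le k-1}\binom{n-1}{i}$ or $|G| > \sum_{i \le k-2}\binom{n-1}{i}$.

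In the first case, the induction hypothesis (same $k$, dimension $n-1$) applied to $F_0' \cup F_1' = \proj(F)$ yields a point $y$ with at least $k$ coordinates equal to $1/2$ in $\conv(\proj(F)) = \proj(\conv(F))$ (convex hull commutes with the linear projection $\proj$ dropping the last coordinate), so some $x \in \conv(F)$ projects to $y$ and already has $k$ half-coordinates. In the second case, the induction hypothesis with $k-1$ in place of $k$, applied to $G$, gives $z \in \conv(G)$ with at least $k-1$ half-coordinates; since for each $g \in G$ both $(g,0)$ and $(g,1)$ lie in $F$, the midpoint $(g, 1/2)$ lies in $\conv(F)$, hence so does the whole set $\{(g,1/2) : g \in G\}$ and therefore its convex hull, which contains $(z, 1/2)$. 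The point $(z,1/2)$ has at least $k$ half-coordinates, closing the induction.

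I do not expect a genuine obstacle here; the only things to get right are the bookkeeping of the Pascal split and the two small structural facts that (i) $\proj(\conv(F)) = \conv(\proj(F))$ and (ii) the ``doubled-prefix'' set $\{(g,1/2): g \in G\}$ is contained in $\conv(F)$ — both routine. If one preferred, the same argument can be packaged as ``apply the Sauer--Shelah lemma to extract a shattered $k$-set $S$, then induct on $|S|$ to produce the $1/2$-point supported on $S$,'' but folding both steps into a single induction on $n$ is cleaner and keeps the argument self-contained.
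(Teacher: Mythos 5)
Your proof is correct, but it takes a different route from the paper's. The paper's proof is two sentences: it invokes the Sauer--Shelah lemma as a black box to extract a set $J$ of $k$ coordinates shattered by $F$, notes that the projection $F_J$ then equals $\{0,1\}^k$ so that $\tfrac{1}{2}\ones \in \conv(F_J)$, and pulls this back to $\conv(F)$ since projection commutes with taking convex hulls. You instead inline the standard inductive proof of Sauer--Shelah (split on the last coordinate, Pascal's identity, the $|F_0'\cup F_1'| + |G|$ accounting) and carry the half-coordinate conclusion through the induction directly, so that the shattered set never appears explicitly. The combinatorial content is the same, but your version is self-contained and avoids the citation, at the cost of length; the two structural facts you flag --- $\proj(\conv(F)) = \conv(\proj(F))$ and $\{(g,\tfrac12): g \in G\} \subseteq \conv(F)$ --- are exactly right and are all that is needed beyond the counting. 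One small point worth making explicit if you write this up: the induction is on $n$ with the statement quantified over all $k$, since Case 2 invokes the hypothesis at $(n-1, k-1)$; your base case $k \le 0$ correctly catches the bottom of that descent.
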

	
	\begin{proof}
		By the Sauer-Shelah Lemma (Lemma 11.1 of \cite{jukna}), there is a set of coordinates $J \subseteq [n]$ of size $|J| = k$ such that the points in $F$ take all possible values in coordinates $J$, i.e., the projection $F_J$ onto the coordinates $J$ equals $\{0,1\}^k$. So the point $\frac{1}{2} \ones$ belongs to $\conv(F_J)$, which implies that $\conv(F)$ has the desired point. 
	\end{proof}

	\begin{proof}[Proof of Theorem \ref{thm:main}]
		Let $E$ be the event that both the bounds from Lemmas \ref{lemma:inf} and \ref{lemma:half} hold. By a union bound, this event happens with probability at least $1 - \frac{2}{e^{n/2}}$. So it suffices to show that there is a constant $c > 0$ such that for every scenario in $E$, every BB tree proving the infeasibility of $Q$ has at least $2^{cn}$ leaves. 

		In hindsight, again let $s = \frac{4n}{10}$ and set $c := 1 - h(\frac{s}{n})$, where $h$ is the binary entropy function $h(p) := p \log \tfrac{1}{p} + (1-p) \log \tfrac{1}{1-p}$. Notice that $c > 0$, since $h$ is strictly increasing in the interval $[0,\frac{1}{2}]$ and hence $h(\frac{s}{n}) < h(\frac{1}{2}) = 1$. 
		
	{Fix a scenario in the event $E$, so we know $Q$ is integer-infeasible and $\textup{Half}_s \subseteq Q$.} Consider any tree $\T$ that proves integer-infeasibility of $Q$, and we claim that it has more than $2^{cn}$ leaves. By contradiction, suppose not. Then $\T$ has a leaf $v$ whose branching constraints $C_v$ are satisfied by at least $\frac{2^n}{2^{cn}} = 2^{n\cdot h(s/n)}$ 0/1 points { (recall that each integer point satisfies all of the branching constraints of at least some leaf)}. But since $2^{n\cdot h(s/n)} > \sum_{i \le s-1}\binom{n}{i}$ (see e.g. Lemma 5 of \cite{gottlieb}), by Lemma \ref{lemma:shatter} we know that the convex set $\{x : C_v\}$ contains a point $\hat{x} \in [0,1]^n$ with at least $s$ coordinates of value $1/2$. Moreover, notice that $\hat{x}$ also belongs to $\conv(\textup{Half}_s)$, which is contained in $Q$. Hence $\hat{x} \in \{x : C_v\} \cap Q$, namely the atom of the leaf $v$. But this contradicts that this atom is empty (which is required since $\T$ proves integer infeasibility of $Q$). This concludes the proof. 
	\end{proof}

\section{BB hardness for TSP}\label{sec:tsp}


{Again we use $P_k$ to denote the cross-polytope in $k$ dimensions.}

\begin{proposition}\label{prop:tsp_hard}
Let $f$ be any composition of the flipping (Defintion~\ref{defn:flip}), embedding (Definition~\ref{defn:embed}), and duplication (Definition~\ref{defn:dup}) operations. Let $H \subseteq [0,1]^n$ be a polytope such that $f(P_k) \subseteq H$ and $f(\frac{1}{2}\ones) \not \in H_I$, where $k \leq n$. Then, {$\BBrank(H) \geq 2^{\lfloor k/2 \rfloor}$}. 
\end{proposition}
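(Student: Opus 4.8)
The plan is to chain together the tools developed in Sections~\ref{sec:defn} and~\ref{sec:tech_lemmas}. The key observation is that Proposition~\ref{prop:crosspoly_lb} already gives us a hard-to-separate point in the cross-polytope, namely $\BBdepth(\frac{1}{2}\ones, P_k) \geq 2^{\lfloor k/2 \rfloor + 1} - 1$, and we want to transport this hardness through $f$ and then into $H$. First I would set $Q := f(P_k)$, so that $Q \subseteq H$ by hypothesis. Since $f$ is a composition of flipping, embedding, and duplication operations, it is an integral affine function (each such operation is of the form $Cx + d$ with integer $C, d$, and compositions of integral affine functions are integral affine), and moreover each of these operations is one-to-one, so $f$ is one-to-one. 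I would apply Corollary~\ref{cor:affine_depth} to conclude $\BBdepth(f(\frac{1}{2}\ones), Q) \geq \BBdepth(\frac{1}{2}\ones, P_k) \geq 2^{\lfloor k/2 \rfloor + 1} - 1 \geq 2^{\lfloor k/2 \rfloor}$; to invoke this corollary I need to check its hypotheses, namely $\frac{1}{2}\ones \notin (P_k)_I$ and $f(\frac{1}{2}\ones) \notin Q_I$.

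The first of these, $\frac{1}{2}\ones \notin (P_k)_I$, is immediate because $P_k$ is integer-infeasible, so $(P_k)_I = \emptyset$. The second, $f(\frac{1}{2}\ones) \notin Q_I = (f(P_k))_I$, requires a little more care, but here the hypothesis $f(\frac{1}{2}\ones) \notin H_I$ combined with $f(P_k) \subseteq H$ does the job: $f(P_k) \subseteq H$ implies $f(P_k) \cap \Z^n \subseteq H \cap \Z^n$, hence $(f(P_k))_I \subseteq H_I$, and therefore $f(\frac{1}{2}\ones) \notin H_I$ gives $f(\frac{1}{2}\ones) \notin (f(P_k))_I$. Next, to move from $Q = f(P_k)$ up to $H$, I would use monotonicity of depth (Corollary~\ref{cor:monoton_depth}): since $Q \subseteq H$ and $f(\frac{1}{2}\ones) \in Q \setminus H_I$ (it lies in $f(P_k) = Q$ and, as just shown, outside $H_I$), we get $\BBdepth(f(\frac{1}{2}\ones), H) \geq \BBdepth(f(\frac{1}{2}\ones), Q)$. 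Finally, $\BBrank(H) = \max_{x \in H \setminus H_I} \BBdepth(x, H) \geq \BBdepth(f(\frac{1}{2}\ones), H) \geq 2^{\lfloor k/2 \rfloor}$, which is the claim.

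The main thing to be careful about — the only real ``obstacle'' — is verifying that the composite map $f$ genuinely satisfies the structural hypotheses needed for Corollary~\ref{cor:affine_depth}: that it is integral affine and one-to-one. Integrality and affineness are preserved under composition essentially by inspection of Definitions~\ref{defn:flip}, \ref{defn:embed}, and~\ref{defn:dup}, since each is given explicitly as $x \mapsto Cx + d$ with integer entries, and the composition of $x \mapsto C_1 x + d_1$ with $x \mapsto C_2 x + d_2$ is $x \mapsto C_2 C_1 x + (C_2 d_1 + d_2)$, still integral affine. Injectivity of each individual operation is clear (flipping is its own inverse; embedding and duplication both retain all the original coordinates verbatim as a subset of the output coordinates), and a composition of injective maps is injective. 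I should also note that, strictly speaking, I used Corollary~\ref{cor:affine_depth} with $P = P_k$ and $Q = f(P)$, which is precisely its setting. The rest of the argument is just bookkeeping with the inclusions $f(P_k) \subseteq H$ and the emptiness of $(P_k)_I$, so once the structural facts about $f$ are in hand, the proof is short.
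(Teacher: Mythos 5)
Your proof is correct and follows essentially the same route as the paper's: transport the hardness of $\frac{1}{2}\ones$ in $P_k$ through $f$ via Corollary~\ref{cor:affine_depth} (using Proposition~\ref{prop:crosspoly_lb}), then lift to $H$ via Corollary~\ref{cor:monoton_depth} and conclude with the definition of $\BBrank$. The only cosmetic difference is that you deduce $f(\frac{1}{2}\ones) \not\in (f(P_k))_I$ from the chain $(f(P_k))_I \subseteq H_I$ together with the hypothesis $f(\frac{1}{2}\ones) \not\in H_I$, whereas the paper observes directly that $f(P_k)$ is integer-infeasible; both verifications are valid.
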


\begin{proof}
Notice that if $f$ is a composition of the flipping, embedding, and duplication operations, then $f$ is an integral affine transformation. 
Moreover, if $P$ is integer-infeasible then $f(P)$ is also integer-infeasible. In particular, since $P_k$ is integer-infeasible we have $(f(P_k))_I = \emptyset$, and hence $f(\frac{1}{2}\ones) \not \in (f(P_k))_I$.  Now, Corollary \ref{cor:affine_depth} and Proposition \ref{prop:crosspoly_lb} give us {$\BBdepth \left(f \left(\frac{1}{2}\ones \right), f(P_k)\right) \geq \BBdepth \left(\frac{1}{2}\ones, P_k \right) \geq 2^{\lfloor k/2 \rfloor +1} - 1$}. Finally, since $f(P_k) \subseteq H$, Corollary \ref{cor:monoton_depth} implies that {$\BBdepth(f(\frac{1}{2}\ones), H) \geq 2^{\lfloor k/2 \rfloor + 1} - 1$}. This implies the desired result: {$\BBrank(H) \geq 2^{\lfloor k/2 \rfloor + 1} - 1 \ge 2^{\lfloor k/2 \rfloor}$}. 
\end{proof}

We next state a key result from { the proof of Theorem 4.1} of~\cite{cook2001matrix} (see also~\cite{chvatal1989cutting}), that shows how we can apply Proposition~\ref{prop:tsp_hard} to obtain BB hardenss of the TSP polytope.  Let $T_{\text{TSP}_n}$ be the {LP relaxation of the TSP polytope using subtour elimination constraints} for $n$ cities:
\begin{align*}
    x(\delta(v)) = 2 & \quad \forall v \in V \\
    x(\delta(W)) \geq 2 & \quad \forall W \subset V, W \not = \emptyset \\
    0 \leq x(e) \leq 1 & \quad \forall e \in E
\end{align*}

\begin{proposition}[{proof of Theorem 4.1 in \cite{cook2001matrix}}]
There exists a function $f$ which is a composition of flipping, embedding, and duplication such that $f(P_{\lfloor n/8 \rfloor})$ is contained in $T_{\text{TSP}_n}$ and $f(\frac{1}{2}\ones)$ does not belong to the integer hull of $T_{\text{TSP}_n}$. 
\end{proposition}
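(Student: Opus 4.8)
The plan is to reuse the geometric construction underlying the proof of Theorem~4.1 of~\cite{cook2001matrix} (itself built on the TSP gadgets of~\cite{chvatal1989cutting}) and simply to observe that the affine map it produces is of exactly the form allowed by Proposition~\ref{prop:tsp_hard}. Concretely, for each $n$ one forms a sparse graph $G_n$ on $n$ cities by arranging $k:=\lfloor n/8\rfloor$ copies $\Gamma_1,\dots,\Gamma_k$ of a fixed $8$-vertex ``variable gadget'' and linking them cyclically through a fixed backbone; edges of $K_n$ that are not edges of $G_n$ are kept at weight $0$. Each gadget $\Gamma_i$ is designed so that for \emph{every} $t\in[0,1]$ there is a weighting of its edges under which all of its vertices have weighted degree $2$, with $t=0$ and $t=1$ corresponding to the two ways a Hamiltonian tour of $G_n$ can pass through $\Gamma_i$. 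I would then define $f:[0,1]^k\to[0,1]^{E(K_n)}$ edge by edge: a backbone edge (or a non-edge of $G_n$) is sent to the constant $0$ or $1$ prescribed by the construction, while an edge of $\Gamma_i$ is sent either to $x_i$ or to $1-x_i$ according to the gadget's internal wiring (several edges of one gadget may receive the same one of $x_i,1-x_i$). Then $f(x)=Cx+d$ where every row of $C$ has at most one nonzero entry, equal to $\pm 1$, and $d\in\{0,1\}^{E}$; this is precisely a composition of duplication (one cross-polytope coordinate driving several edges), flipping (edges carrying $1-x_i$) and embedding (edges fixed to constants). Verifying this is just a matter of reading off the construction, and is the easiest step.

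For the first required property, $f(P_k)\subseteq T_{\mathrm{TSP}_n}$, I would check the three constraint families in turn. The degree equations $x(\delta(v))=2$, the bounds $0\le x_e\le 1$, and the fixing of non-edges to $0$ hold for \emph{all} $x\in[0,1]^k$ because the gadget behaves like a fractional $2$-factor by design, so in particular on $P_k$. The subtour-elimination inequalities $x(\delta(W))\ge 2$ call for a case analysis on the cut $\delta(W)$: a cut that separates the backbone already collects weight $\ge 2$ from the fixed backbone edges; a cut internal to one gadget is dealt with by the gadget's own local structure; and for the ``product'' cuts, where $W$ selects one of the two sides of $\Gamma_i$ according to a subset $J\subseteq[k]$, the gadget is arranged so that $x(\delta(W))\ge 2$ is \emph{equivalent} to the cross-polytope constraint indexed by $J$, namely $\sum_{i\in J}x_i+\sum_{i\notin J}(1-x_i)\ge\tfrac12$, which holds because $x\in P_k$. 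Establishing this cut-value correspondence with the explicit gadget is the main technical obstacle of this step; it is exactly the verification carried out in~\cite{cook2001matrix}.

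For the second property, $f(\tfrac12\ones)\notin (T_{\mathrm{TSP}_n})_I$, recall that $(T_{\mathrm{TSP}_n})_I$ is the convex hull of the incidence vectors of Hamiltonian tours of $G_n$, i.e.\ the TSP polytope. Under the correspondence above, $f(\tfrac12\ones)$ is the point in which every gadget sits in its ``undecided'' state $t=\tfrac12$; it lies in $T_{\mathrm{TSP}_n}$ (since $\tfrac12\ones$ satisfies every cross-polytope inequality, with room to spare), but~\cite{cook2001matrix}, following~\cite{chvatal1989cutting}, exhibit a valid TSP inequality --- a comb inequality whose handle and teeth thread through the gadgets --- that this point violates by a strictly positive amount. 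Hence $f(\tfrac12\ones)$ is not a convex combination of tour incidence vectors, i.e.\ $f(\tfrac12\ones)\notin (T_{\mathrm{TSP}_n})_I$. Producing this separating inequality and checking its violation is the delicate part here, and is once more precisely the content of the proof of Theorem~4.1 in~\cite{cook2001matrix}. With both properties established, $f$ is a map of the claimed form, which is what had to be shown; feeding it into Proposition~\ref{prop:tsp_hard} with $k=\lfloor n/8\rfloor$ then yields $\BBrank(T_{\mathrm{TSP}_n})\ge 2^{\Omega(n)}$.
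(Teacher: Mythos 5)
Your proposal is correct and matches the paper's treatment: the paper states this proposition as a direct import from the proof of Theorem~4.1 in \cite{cook2001matrix} (building on \cite{chvatal1989cutting}) and offers no independent argument, exactly as you do by deferring the cut--value correspondence for the subtour inequalities and the separating inequality for $f(\frac{1}{2}\ones)$ to that reference. Your added observation that the resulting affine map is a composition of duplication, flipping, and embedding is precisely the (easy) point the paper needs in order to invoke Proposition~\ref{prop:tsp_hard}, so nothing is missing relative to what the paper itself establishes.
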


Then employing Proposition~\ref{prop:tsp_hard} we obtain {$\BBrank(T_{\text{TSP}_n}) \ge 2^{\frac{n}{16} - 2}$, and again since BB hardness is at least the BB rank (Lemma \ref{lem:rankhardness}) we obtain the desired hardness.} 

\begin{cor}[BB hardness for TSP]
{$\BBhardness(T_{\text{TSP}_n)} \geq  2^{\frac{n}{16} - 2}$}, i.e, there is a {vector} $c \in \R^{n(n -1)/2}$ such that the smallest branch-and-bound tree that solves $$\max_{x \in T_{\text{TSP}_n} \cap \{0,1\}^{n(n -1)/2}} \ip{c}{x}$$ has size at least {$2^{\frac{n}{16} - 2}$}.
\end{cor}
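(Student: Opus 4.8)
The plan is to chain together the reduction machinery of Sections~\ref{sec:tech_lemmas} and~\ref{sec:cross_poly_hard} with the embedding of a cross-polytope into the TSP relaxation quoted just above. First I would invoke the cited Proposition (from the proof of Theorem 4.1 in~\cite{cook2001matrix}, cf.~\cite{chvatal1989cutting}) to obtain a map $f$ that is a composition of flipping, embedding, and duplication operations satisfying $f(P_{\lfloor n/8 \rfloor}) \subseteq T_{\text{TSP}_n}$ and $f(\tfrac{1}{2}\ones) \notin (T_{\text{TSP}_n})_I$. With this map in hand, Proposition~\ref{prop:tsp_hard} applies directly with $H := T_{\text{TSP}_n}$ and $k := \lfloor n/8 \rfloor$ (the ambient dimension being $\binom{n}{2}$ here, in place of the generic $n$ in that statement), yielding
$$\BBrank(T_{\text{TSP}_n}) \ \geq\ 2^{\lfloor \lfloor n/8 \rfloor / 2 \rfloor + 1} - 1 \ \geq\ 2^{\lfloor \lfloor n/8 \rfloor / 2 \rfloor}.$$

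Next I would tidy up the nested floors: since $\lfloor \lfloor n/8 \rfloor /2 \rfloor \geq \tfrac{(n/8)-1}{2} - 1 = \tfrac{n}{16} - \tfrac{3}{2} \geq \tfrac{n}{16} - 2$, this already gives $\BBrank(T_{\text{TSP}_n}) \geq 2^{n/16 - 2}$. Finally, applying Lemma~\ref{lem:rankhardness} (``BB rank lower bounds BB hardness'') to the polytope $T_{\text{TSP}_n}$ produces a cost vector $c \in \R^{n(n-1)/2}$ for which the smallest BB tree solving $\max_{x \in T_{\text{TSP}_n} \cap \{0,1\}^{n(n-1)/2}} \ip{c}{x}$ has size at least $\BBrank(T_{\text{TSP}_n}) \geq 2^{n/16 - 2}$, which is exactly the claimed bound.

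I do not expect any genuine obstacle in this last step, because all the substantive work has already been done: the hard inputs are (i) the geometric construction of Cook et al., which exhibits an integral affine image of the $\lfloor n/8 \rfloor$-dimensional cross-polytope inside the subtour-elimination relaxation with the center $\tfrac12\ones$ landing outside the integer hull, and (ii) Proposition~\ref{prop:crosspoly_lb}, the exponential lower bound on $\BBdepth(\tfrac12\ones, P_k)$, which is funneled through Corollary~\ref{cor:affine_depth} (simulation under integral affine maps) and Corollary~\ref{cor:monoton_depth} (monotonicity of depth) inside Proposition~\ref{prop:tsp_hard}. The only points requiring a modicum of care are the bookkeeping of the floor functions when simplifying $\lfloor \lfloor n/8 \rfloor /2 \rfloor$ down to the clean exponent $\tfrac{n}{16}-2$, and remembering that the ambient dimension of the TSP relaxation is $\binom{n}{2}$ rather than $n$ — neither of which poses any real difficulty.
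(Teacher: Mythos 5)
Your proposal is correct and follows essentially the same route as the paper: invoke the Cook et al.\ construction to get $f(P_{\lfloor n/8\rfloor}) \subseteq T_{\text{TSP}_n}$ with $f(\tfrac12\ones) \notin (T_{\text{TSP}_n})_I$, apply Proposition~\ref{prop:tsp_hard} with $k = \lfloor n/8\rfloor$ to bound $\BBrank(T_{\text{TSP}_n})$, simplify the nested floors to the exponent $\tfrac{n}{16}-2$, and finish with Lemma~\ref{lem:rankhardness}. The only (shared, harmless) omission is that Proposition~\ref{prop:crosspoly_lb} formally requires $\lfloor k/2\rfloor > 1$, i.e.\ $n$ sufficiently large, which the paper also leaves implicit.
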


\bibliography{bib}
\bibliographystyle{abbrv}

\end{document}